\documentclass[a4paper,12pt]{article}

\usepackage{enumitem}
\newcommand{\subscript}[2]{$#1 _ #2$}

\usepackage[utf8]{inputenc}
\usepackage{amstext}
\usepackage{amsfonts}

\usepackage{amsmath,amsthm}
\usepackage{textcomp}
\usepackage{amssymb}
\usepackage{amscd}
\usepackage{amsmath}
\usepackage{xcolor}
\usepackage{comment}

\makeatletter
\newtheorem*{rep@theorem}{\rep@title}
\newcommand{\newreptheorem}[2]{%
\newenvironment{rep#1}[1]{%
 \def\rep@title{#2 \ref{##1}}%
 \begin{rep@theorem}}%
 {\end{rep@theorem}}}
\makeatother

\newtheorem{Theorem}{Theorem}[subsection]
\newreptheorem{Theorem}{Theorem}
\newreptheorem{Corollary}{Corollary}
\newtheorem{Lemma}[Theorem]{Lemma}
\newtheorem{Proposition}[Theorem]{Proposition}
\newtheorem{Corollary}[Theorem]{Corollary}

\newtheorem{Definition}[Theorem]{Definition}
\newtheorem{Example}[Theorem]{Example}
\newtheorem{Remark}[Theorem]{Remark}

\newcommand{\seq}[1]{\left( #1 \right)}

\newcommand{\N}{\mathbb{N}}
\newcommand{\R}{\mathbb{R}}
\newcommand{\Z}{\mathbb{Z}}
\newcommand{\Q}{\mathbb{Q}}
\newcommand{\x}{\mathbf{x}}
\newcommand{\y}{\mathbf{y}}
\newcommand{\z}{\mathbf{z}}

\newcommand{\B}{\mathcal{L}}

\newcommand{\m}[2]{\mathcal {#1}_ {#2}}
\newcommand{\invlim}[2]{\lim\limits_{\longleftarrow}\{#1,#2\}}

\renewcommand\labelenumi{(\roman{enumi})}
\renewcommand\theenumi\labelenumi

\begin{document}

\title{Shadowing, Finite Order Shifts and Ultrametric Spaces}
\author{Udayan B. Darji\\
\and  
Daniel Gon\c{c}alves \thanks{partially supported by Conselho Nacional de Desenvolvimento Cient\'ifico e Tecnol\'ogico (CNPq) and Capes-PrInt grant number 88881.310538/2018-01 - Brazil.} \\ 
\and 
Marcelo Sobottka \thanks{partially supported by Conselho Nacional de Desenvolvimento Cient\'ifico e Tecnol\'ogico (CNPq) grant number 301445/2018-4 - Brazil,  and Capes-PrInt grant number 88881.310538/2018-01 - Brazil.} \\}

\date{ }
\maketitle

\begin{abstract}

Inspired by a recent novel work of Good and Meddaugh, we establish fundamental connections between shadowing, finite order shifts, and ultrametric complete spaces. We develop a theory of shifts of finite type for infinite alphabets. We call them shifts of finite order.
We develop the basic theory of the shadowing property in general metric spaces, exhibiting similarities and differences with the theory in compact spaces. We connect these two theories in the setting of zero-dimensional complete spaces, showing that a uniformly continuous map of an ultrametric complete space has the finite shadowing property if, and only if, it is an inverse limit of a system of shifts of finite order satisfying the Mittag-Leffler Condition. Furthermore, in this context, we show that the shadowing property is equivalent to the finite shadowing property and the fulfillment of the Mittag-Leffler~Condition in the inverse limit description of the system. As corollaries, we obtain that a variety of maps in ultrametric  spaces have the shadowing property, such as similarities and, more generally,  maps which themselves, or their inverses, have Lipschitz constant 1.  Finally, we apply our results to the dynamics of $p$-adic integers and $p$-adic rationals.
\footnote{2020 {\em Mathematics Subject Classification:} Primary 37B65, 37B10 Secondary
11S82, 37P05\\ 
{\em Keywords:} Shadowing, finite order shift spaces, ultrametric spaces, $p$-adics, inverse limits, Polish space, non-Archimedean dynamical systems}
\end{abstract}

\newpage
\tableofcontents
\newpage

\section{Introduction}
Very recently Good and Meddaugh \cite{GMinventiones} made a fundamental structural connection between the notions of shifts of finite type and the shadowing property. A classical result of Walters \cite{WaltP} states that a shift has the shadowing property if, and only if, it is a shift of finite type. The fundamental result in \cite{GMinventiones} shows that a continuous function $f:X\rightarrow X$, where $X$ is a compact totally disconnected space, has the shadowing property if, and only if, the system $(X,f)$ is conjugate to the inverse limit of a directed system consisting of shifts of finite type and satisfying the Mittag-Leffler Condition. As tempting as it is to reiterate the history and the importance of shifts of finite type and shadowing, we refrain from it here and let the reader browse the beautiful exposition in \cite{GMinventiones}. We do, however, highlight that in the compact setting finite shadowing and infinite shadowing agree. This fact has boosted the study of the shadowing property, as it allows for an interface between theoretical and numerical results.

Our goal in this paper is to start a systematic study of the shadowing property in zero-dimensional (not necessarily compact) metric spaces. As we pointed out above, a key feature to be understood is the equivalence between finite and infinite shadowing. In our work, we show that this equivalence does not hold for general continuous functions on locally compact spaces (see Example~\ref{FinShadowEx}), but it does on uniformly compact spaces (Proposition~\ref{UnifLCThm}). In the setting of general metric dynamical systems the situation is more complicated. We have, in fact, completely characterized when finite shadowing and infinite shadowing agree on dynamical systems (Theorem~\ref{joelho}), but this requires an inverse limit description of the system, which proves to be a fundamental result in the study of the shadowing property (as are the results of \cite{GMinventiones} in the compact case). 

As we hinted above, we will connect the shadowing property of a dynamical system   $(X,d,f)$, $X$ zero-dimensional and complete, with simple objects such as shifts. Our general approach is inspired by \cite{GMinventiones}, however, it departs from it in several important ways.




First, we must find an appropriate concept of ``shift of finite type" in the noncompact setting. There are many approaches to shifts over infinite alphabets, e.g., \cite{MR3938320, MR3207861}. We will focus on the approach which takes the full shift space over an alphabet to be the product space, with product topology, and the usual shift map. These shifts have been studied extensively, e.g., \cite{MR1484730}. In Section~2, we introduce the notion of shift of finite order, which is, in the setting of countable alphabets, an analog of a shift of finite type.

Another important difference, as we have already mentioned, is the notion of shadowing in the noncompact framework. In fact, in \cite{GMinventiones} it is shown that in the setting of compact metric spaces, shadowing is a topological property. Using this fact the authors define a meaningful notion of shadowing in an arbitrary compact space. In the noncompact setting things are very different, as finite shadowing and shadowing are not equivalent properties, the former being more general. Moreover, two dynamical systems, one with the shadowing property and the other without, may be conjugate to each other, see Example~\ref{FinShadowEx}. We shall see, in Proposition~\ref{UnifCong}, that uniform conjugacy remedy this problem, i.e., two uniformly conjugate  dynamical systems either both have the shadowing property or neither has it. Of course, in the compact metric space setting uniform conjugacy and conjugacy are equivalent.


In our work, the metric on the space plays a significant role. We therefore introduce the notions of ``defining sequence" and ``tame defining sequence", see Definitions~\ref{nintendoDS} and \ref{tameDS}. A defining sequence $\{\mathcal {U}_n \}$ can be thought of as a sequence of partitions of a zero-dimensional space $X$, where the partitions get finer as $n$ increases. A tame defining sequence is a defining sequence where the mesh of the diameter of the partition goes to zero as $n$ increases and, at the same time, given a partition in the sequence any two elements of it are a fixed distance apart. These definitions may seem contrive, however, we show that a space admits a tame defining sequence if and only if its metric is uniformly equivalent to an admissible ultrametric. Our main theorem which connects shadowing and shifts of finite order is the following.

\begin{repTheorem}{ThmInvLimSh}
Let $X$ be a metric space with a complete tame defining sequence and $(X,d,f)$ be a dynamical system with the finite shadowing property.  Then, $f$ is conjugate to the inverse limit of a sequence of 1-step shifts on a countable alphabet. More precisely, $(X,d,f)$ is conjugate to $   (\invlim{\hookrightarrow}{\m{PO}{}(\m{U}{n})},\sigma^*)$. Moreover, if $f$ is uniformly continuous, then the conjugacy can be made uniform. 
\end{repTheorem}

Extending the classical result of Walters mentioned earlier, we show that for shifts on a countable alphabet, the finite shadowing property, the shadowing property, and being a shift of finite order are equivalent, see Proposition~\ref{shadow-finiteshadow-porder}. (This was also proved independently by Meddaugh and Raines in \cite{meddaughRaines}.) In the context of uniformly continuous dynamical systems with  complete tame defining sequences, we show that the equivalence between the shadowing property and the finite shadowing property depends on the fulfillment of the Mittag-Leffler Condition in the inverse limit description of a natural system, see Theorem~\ref{joelho}.

We also show that the inverse limit of metric dynamical systems with the shadowing property also has the shadowing property, provided that the inverse system satisfies the Mittag-Leffler Condition, see Theorem~\ref{caipirinha}. Putting this together with the above theorem, we have the following corollary. 

\begin{repCorollary}{CorMain}
Let $(X,d)$ be a metric space with a  complete tame defining sequence and $f : X \rightarrow X $ be a uniformly continuous map. Then, $f$ has the shadowing property if, and only if, $f$ is uniformly conjugate to an inverse limit dynamical system, in which the associated inverse system consists of a sequence of 1-step shifts on a countable alphabet, with uniformly continuous bonding maps, and satisfies the Mittag-Leffler Condition.
\end{repCorollary}

We apply above results and the techniques developed in the article to ultrametric  spaces. An ultrametric space is a space where the triangle inequality is strengthened to $d(x,z) = \max \{d(x,y),d(y,z)\}$. Ultrametric spaces have topological dimension zero.
They naturally appear in a variety of places, including general topology, mathematical logic, theoretical computer science, $p$-adic dynamics, and theoretical biology. For example, results regarding Lipschitz and uniformly continuous and Borel reducibilities are studied in \cite{ CamerloMarconeMottoRos, DBLP:books/daglib/p/RosS14}, generic elements in isometry groups are described in \cite{Malick}, Polish ultrametric spaces on which each Baire one function is first return recoverable are characterized in \cite{Duncan} and locally contracting maps on perfect Polish ultrametric spaces are studied in \cite{vesnik}.

As ultrametric spaces are zero-dimensional, they have a basis of clopen sets. Standard notions such as being Lipschitz or being an isometry are often localized. For example, if we consider the balls of radius $\varepsilon >0$ for a fixed $\varepsilon$,  we obtain a partition of an ultrametric space. Often maps in question are Lipschitz or an isometry at this local level. We call this local behavior eventual, see Definition~\ref{feijao}. The following corollaries establish the shadowing property of maps of these types. 

\begin{repCorollary}{camarao}
Let $(X,d)$ be a complete ultrametric space. If $f:X \rightarrow X$ is an eventually  $1$-Lipschitz map, then $f$ has the shadowing property. 
\end{repCorollary}

\begin{repCorollary}{almoco}
Suppose that $(X,d)$ is an  ultrametric space with the additional property that for some $\varepsilon >0$, all balls of radius $\varepsilon$ are compact.  Let $f:X\rightarrow X$ be an invertible uniformly continuous map.
\begin{enumerate}
    \item If $f^{-1}$ is an eventually $1$-Lipschitz map, then $f$ has the shadowing property.
   \item If $f^{-1}$ is uniformly continuous and $f$ is an eventual similarity, then $f$ has the shadowing property. 
\end{enumerate} 
\end{repCorollary}

In particular, all similarities in compact ultrametric spaces have the shadowing property. Of course, for contractions and dilations, this is well known in general Polish spaces. What may be somewhat surprising is that the identity map has the shadowing property. Of course, such is never the case in a connected metric space with more than one element. In Example~\ref{exqueijo}, we point out that shifts which are not of finite order are Lipschitz, but do not have the shadowing property. Hence, our results are, in some sense, sharp.

An important class of ultrametric dynamical systems consists of the $p$-adic dynamics, which has both practical and theoretical applications (we refer the reader to the Aims and Scope section of the journal ``$p$-Adic Numbers, Ultrametric Analysis and Applications" for a comprehensive description of fields which intersect with $p$-adic dynamics). Among developments in the area we mention that minimal polynomial dynamics on the set of 3-adic integers is studied in \cite{10.1112/blms/bdp003}, strict ergodicity of affine $p$-adic dynamical systems on $\Z_p$ is described in \cite{FAN2007666}, minimal decomposition of $p$-adic homographic dynamical systems is studied in \cite{fanfan}, and shadowing and stability in $p$-adic dynamics are studied in the recent paper \cite{BCA}. We are particularly interested in the results of \cite{BCA}. As consequences of our results on ultrametric  spaces, we recover many of the shadowing results in \cite{BCA}, in addition to answering a question left open. More precisely, we prove the following.

\begin{repCorollary}{Cortakethat}
The three statements below hold.

\begin{enumerate}
    \item \cite[Theorem~1]{BCA} 
If $f: \mathbb{Z}_p \to \mathbb{Z}_p$ is a $(p^{-k},p^{m})$ locally scaling function, where $1 \leq m \leq k $ are integers, then $f$ has the shadowing property.
 \item  \cite[Proposition~18]{BCA} 
If $f: \Z_p \to \Z_p$ is a $1$-Lipschitz map, then $f$ has the shadowing property.
\item  \cite[Remark~19]{BCA} 
If  $f: \mathbb{Q}_p \to \mathbb{Q}_p$ is a $1$-Lipschitz map, then $f$  has the shadowing property.
\end{enumerate}
\end{repCorollary}

In the final remarks of this introduction, we would like to point out that dynamics outside the realm of compact metric space is a thriving area. For instance, strong orbit equivalence and topological equivalence of locally compact Cantor minimal systems are studied in \cite{Danilenko, Matui} and topological full groups as invariants for groupoids associated to locally compact spaces (and with connections to graph and ultragraph C*-algebras) are studied in \cite{ CGW, NylandOrtega}. The theory of chaos (including Li-Yorke Chaos, Devaney chaos, distributional chaos) is also explored in the context of locally compact dynamics, see for example \cite{DAI20175521, brunodaniel, GONCALVES2020102807, raines}. 
Our result provides a strong connection, via inverse limits, between the theory of shadowing on dynamical systems and the theory of shift spaces with the product topology. The inverse limit description of a dynamical system has various advantages, as it is often applied in the computation of invariants for the system (for example, \v{C}ech cohomology of a tiling dynamical system is computed from its inverse limit description (see \cite{AndersonPutnam, RamirezGonc}). We hope our results and techniques apply to shadowing in various non compact settings mentioned above.

The paper is organized as follows. After this paragraph we set up notation that will be used throughout the paper. In Section~2, we define and develop symbolic dynamics on countable alphabets pertinent to our results. We also develop the basic theory of the finite shadowing property and the  shadowing property in the setting of locally compact metric spaces. Notions of defining sequence, ultrametric spaces, and $p$-adics, the settings in which our main results reside, are developed in Section~3. Section~4 concerns shadowing, inverse limits, and proofs of our main results. In the final Section~5, we give applications of our results.

Throughout our work $\N$ and $\N_+$ denote the set of non-negative integers, and positive integers, respectively.  For the sake of ease of readability, we denote points of a space $X$ in bold style (e.g., $\x,\y,\z\in X$). 

 Each of our spaces $X$ is assumed to be nonempty, Hausdorff and
second countable, or, equivalently, a separable, metrizable topological
space. A {\em metric space} is a pair $(X, d)$ consisting of a space together
with a choice of an {\em admissible metric}, i.e., one whose induced topology is
that of the space. A space is {\em Polish} when it admits a complete metric.
It is well-known that a nonempty subset of a Polish space is Polish if
and only if it is a $G_\delta$ subset. A space is zero-dimensional when the
clopen subsets form a basis for the space. A {\em dynamical system} $(X,d,f)$ is a metric space $(X,d)$ equipped with a continuous map $f: X\rightarrow X$. If $\{(X_i,d_i)\}_{i\in \N}$ is a sequence of uniformly bounded metric spaces we define the {\em product metric} as 
\begin{equation}\label{productmetric}
    d_\Pi(x,y)=\max_i  \left \{\frac{d_i(x_i,y_i)}{i+1} \right \}.
\end{equation}
We remark for future use that if $h_i:(X_i,d_i) \rightarrow (X_i', d_i')$ are (uniformly) continuous maps, then the product map $\prod_i h_i $ is a (uniformly) continuous map from $(\prod X_i, d_\Pi)$ to $(\prod X_i', d'_\Pi)$. Moreover, if $h_i$ is 1-Lipschitz map, then so is the product map $\prod_i h_i $.

\section{Shadowing and Symbolic Dynamics}

\subsection{Shadowing in metric spaces}
In this section, we recall the definition of shadowing and develop some of its properties in metric spaces.

\begin{Definition}\label{lasagna} Let $X$ be a metric space, $f:X\to X$ be a map, and $I$ be an initial segment of $\N$. We say that,
\begin{enumerate}
\item The sequence $(f^n(\x))_{n\in\N}$ is the \emph{orbit}, or \emph{trajectory}, of the point $\x\in X$.

\item A (finite or infinite) sequence $(\x_n)_{n\in I}\in X$ is a \emph{(finite or infinite) $\delta$-chain}, or \emph{$\delta$-pseudo-orbit (trajectory)}, if $d(f(\x_n), \x_{n+1})<\delta$ for all $n < \sup (I)$.

\item A point $\x\in X$ \emph{$\varepsilon$-shadows} a (finite or infinite) sequence $(\x_n)_{n\in I}$ if $d(f^n(\x),\x_n)<\varepsilon $, for all $n\in I$.
\end{enumerate}
\end{Definition}

\begin{Definition}\label{mignon}
We say that a dynamical system $(X,d, f)$ has the \emph{finite shadowing property} if, for any $\varepsilon>0$, there exists $\delta>0$ such that any finite $\delta$-pseudo-orbit is $\varepsilon$-shadowed by some point. 
We say that $(X,d, f)$ has the \emph{shadowing property} if, for any $\varepsilon>0$, there exists $\delta>0$ such that any infinite $\delta$-pseudo-orbit is $\varepsilon$-shadowed by some point.
\end{Definition}

There are other concepts of shadowing in topological dynamics,  such as $\ell_p$ shadowing, asymptotic shadowing,  $s$-limit shadowing, etc. For a study of the relations between these notions, the reader is referred to \cite{goodoprocha}.
We also point out that a different notion of shadowing was studied in \cite{DasLeeEtal} and a spectral decomposition theorem was proved. For general references on shadowing, we suggest the classical texts  \cite{palmer, Pilyugin}.

It was shown in \cite[Lemma 6]{GM} that for compact metric spaces shadowing is a topological concept, i.e., shadowing is independent of the metric as long as the topology is the same. The following simple example shows such is not the case for locally compact spaces.

\begin{Example}\label{ExMetricDep}
There exists a locally compact space $X$, a continuous function $f:X \rightarrow X$, and two complete metrics on $X$ which generate the same topology, one yielding that $f$ has the shadowing property and the other not.
\end{Example}
\noindent
{\em Construction:} Our space $X$ will be a countable subset of $\R$. Let $X=\{\x_n:n\in \Z\}$, where $\x_n = n$ for $n\geq 0$ and, for $n>0$, $\x_{-n}=n+\frac{1}{n+1}$.  Let $f:X \rightarrow X$ be defined by $f(\x_n) = \x_{n+1}$, for every $n\in \Z$. 
It is clear that $f$ is a homeomorphism of $X$. It is also clear that if one uses the discrete metric on $X$, in which the distance between any two distinct points is 1, then $f$ has the shadowing property with respect to this metric. It is also clear that this metric generates the same topology as the topology induced by the metric on $\R$. 

We will show that with respect to the usual metric of $\R$, $f$ does not have the shadowing property. To this end, notice that the orbit of every point in $X$ is unbounded. Let $\varepsilon = 1/2$ and $\delta >0$. We will construct a $\delta$-pseudo-trajectory that is bounded, and hence cannot be $\varepsilon$ shadowed by an orbit. Let $N$ be a positive integer such that $\frac{1}{N}<\delta$. We define a $\delta$-pseudo-trajectory $\{\z_n\}_{n=0}^{\infty}$ as follows: let $\z_i = \x_i$ for $ 0 \le i < N$, $\z_N=\x_{-N}$ and $\z_{N+k} = \x_{-N+k}$ for $1 \le k \le N$. Notice that we have defined $\z_i$ for $i=0,\ldots 2N$, and $\z_{2N}=\z_0$. So we extend this definition periodically, that is, for $i>2N$, let $\z_i = z_{i \mod 2N}$. Then $\{\z_n\}_{n=1}^{\infty}$ is a bounded, periodic $\delta$-pseudo-orbit that can not be $\varepsilon$-shadowed by a trajectory.
\qed

\begin{Remark}
Notice that the (finite) shadowing property is clearly a uniform notion, that is, if $(X,d_1,f)$ has the (finite) shadowing property, and $d_1$ is uniformly equivalent to $d_2$, then $(X,d_2,f)$ also has the (finite) shadowing property. Recall that $d_1$ and $d_2$ are uniformly equivalent if the identity maps $i:(X,d_1)\rightarrow (X,d_2)$ and $i:(X,d_2)\rightarrow (X,d_1)$ are uniformly continuous.
\end{Remark}

As the above example shows, conjugacy is not enough to preserve shadowing among  noncompact metric dynamical systems. We need a stronger version of conjugacy, namely uniform conjugacy, which we define below.

\begin{Definition} 
    Suppose $(X,d_1,f)$ and $(Y,d_2,g)$ are dynamical systems. We say that \emph{ $(X,d_1,f)$ and $(Y,d_2,g)$ are uniformly conjugate} if there is a surjective homeomorphism $h:X \rightarrow Y$, with $h$ and $h^{-1}$ uniformly continuous, such that $h\circ f = g \circ h$.
\end{Definition}

 The following proposition is straightforward, but will be used throughout our paper. We leave the proof to the reader.

\begin{Proposition}\label{UnifCong}
Suppose that $(X,d_1,f)$ and $(Y,d_2,g)$ are uniformly conjugate dynamical systems. If $(X,d_1,f)$ has the (finite) shadowing property, then so does $(Y,d_2,g)$.
\end{Proposition}

Since some of our key results depend on completeness of the space studied, we describe below the relation between the (finite) shadowing property in a uniformly continuous dynamical system and its completion.
The next two results were kindly pointed out to us by the anonymous referee.

\begin{Theorem} Suppose that $(X, d, f)$ is a dynamical system with $f$ uniformly
continuous. Let $(\hat X,\hat d)$ be the completion of $(X,d)$ and $\hat f$ be the extension of
$f$ to $(\hat X,\hat d)$. If $(X, d, f)$ has the (finite) shadowing property, then $(\hat X,\hat d,\hat f)$ does as well.
\end{Theorem}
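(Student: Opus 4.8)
The plan is to transfer pseudo-orbits and shadowing points between $X$ and its completion $\hat X$ using density and uniform continuity. The key observation is that a $\delta$-pseudo-orbit in $\hat X$ can be approximated, entry by entry, by points of $X$, producing a $\delta'$-pseudo-orbit in $X$ (for a slightly larger $\delta'$, controlled by the modulus of uniform continuity of $f$), and conversely a shadowing point found in $X$ automatically lives in $\hat X$. Since $\hat f$ is the unique continuous (indeed uniformly continuous) extension of $f$, we have $\hat f^n \restriction X = f^n$, so orbits computed in $\hat X$ of points of $X$ agree with orbits in $X$; this is what makes the transfer clean.

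Concretely, fix $\varepsilon > 0$. First I would apply uniform continuity of $f$ to get $\eta > 0$ such that $d(x,y) < \eta$ implies $d(f(x), f(y)) < \varepsilon/3$; by the (finite) shadowing property of $(X,d,f)$ choose $\delta_0 > 0$ witnessing shadowing for the target precision $\varepsilon/3$, and set $\delta = \min\{\delta_0/3, \eta, \varepsilon/3\}$ (the exact bookkeeping is routine). Now let $(\hat{\x}_n)_{n \in I}$ be a (finite or infinite) $\delta$-pseudo-orbit in $\hat X$. Using density of $X$ in $\hat X$, pick $\x_n \in X$ with $\hat d(\x_n, \hat{\x}_n)$ as small as we like, say less than $\delta$ and also less than $\eta$. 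Then $(\x_n)_n$ is a $\delta'$-pseudo-orbit in $X$ with $\delta' \le \delta + \varepsilon/3 + \delta \le \delta_0$: indeed $\hat d(f(\x_n), \x_{n+1}) \le \hat d(f(\x_n), f(\hat{\x}_n)) + \hat d(\hat f(\hat{\x}_n), \hat{\x}_{n+1}) + \hat d(\hat{\x}_{n+1}, \x_{n+1})$, and the first term is $< \varepsilon/3$ by choice of $\eta$, the second is $< \delta$ since it is a pseudo-orbit, the third is $< \delta$ by density — note all these distances are computed with $\hat d$ but since the relevant points already lie in $X$ this is just $d$. By the (finite) shadowing property of $X$ there is $\x \in X$ that $(\varepsilon/3)$-shadows $(\x_n)_n$. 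Finally $\x \in X \subseteq \hat X$, and for each $n \in I$, $\hat d(\hat f^n(\x), \hat{\x}_n) \le \hat d(f^n(\x), \x_n) + \hat d(\x_n, \hat{\x}_n) < \varepsilon/3 + \delta < \varepsilon$, so $\x$ $\varepsilon$-shadows the original pseudo-orbit in $\hat X$. This handles both the finite and the infinite case simultaneously, since $I$ is arbitrary throughout.

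The main subtlety — not really an obstacle, but the point that needs care — is ensuring that the $\delta$-approximation of the pseudo-orbit entries does not inflate the constants unrecoverably: one must fix the modulus $\eta$ of uniform continuity of $f$ \emph{before} choosing how closely to approximate, and choose $\delta$ small enough that the three accumulated error terms still land below the threshold $\delta_0$ at which shadowing in $X$ is guaranteed. Uniform continuity of $f$ (rather than mere continuity) is exactly what is needed here, since the approximating points $\x_n$ range over all of the pseudo-orbit and a non-uniform modulus could degenerate. The fact that $\hat f$ is itself uniformly continuous (the standard extension fact) is used implicitly to know $\hat f$ is well-defined and single-valued and restricts to $f$, but plays no further quantitative role in this argument.
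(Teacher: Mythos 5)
Your strategy is the same as the paper's: approximate the pseudo-orbit in $\hat X$ entry by entry with points of $X$, use uniform continuity to see that the approximating sequence is a pseudo-orbit in $X$ below the shadowing threshold, shadow it in $X$, and note the shadowing point also $\varepsilon$-shadows the original sequence in $\hat X$. However, the quantitative bookkeeping, which you dismiss as routine, contains a genuine quantifier-order error. You fix the modulus $\eta$ of uniform continuity so that $d(x,y)<\eta$ gives $d(f(x),f(y))<\varepsilon/3$, and then your triangle inequality only shows that $(x_n)$ is an $(\varepsilon/3+2\delta)$-pseudo-orbit. The claimed chain $\delta'\le \delta+\varepsilon/3+\delta\le\delta_0$ is false in general: $\delta_0$ is the shadowing constant for target precision $\varepsilon/3$, and typically $\delta_0<\varepsilon/3$, so the accumulated error has an incompressible term of size $\varepsilon/3$ that no choice of $\delta$ can push below $\delta_0$. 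Since the shadowing property gives no conclusion for pseudo-orbits coarser than $\delta_0$, the argument breaks at exactly this step. The repair is the ordering the paper uses: first fix $\delta_0$ witnessing shadowing at precision $\varepsilon/2$ (say) with $\delta_0<\varepsilon/3$, then choose $\eta$ from uniform continuity so that $\hat d(x,y)<\eta$ forces $\hat d(\hat f(x),\hat f(y))<\delta_0/3$, approximate each $\hat x_n$ within $\min\{\eta,\delta_0/3\}$, and declare the theorem's $\delta$ for $\hat X$ to be $\delta_0/3$; then all three error terms are at most $\delta_0/3$ and the transferred sequence really is a $\delta_0$-pseudo-orbit in $X$. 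You even state this principle correctly in your closing paragraph (``the three accumulated error terms must land below $\delta_0$''), but your concrete choices peg $\eta$ to $\varepsilon$ rather than to $\delta_0$ and so do not implement it.

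A secondary point: you apply the modulus of uniform continuity of $f$ (a statement about pairs of points of $X$) to the pair $(x_n,\hat x_n)$, where $\hat x_n$ may lie in $\hat X\setminus X$. You should instead use the modulus of the extension $\hat f$, which is uniformly continuous on $\hat X$ (the paper chooses $\delta_1$ directly for $\hat f$), or note that the modulus of $f$ transfers to $\hat f$ with a non-strict inequality and leave enough slack for that.
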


\begin{proof} 
First, notice that $\hat f$, the extension of $f$ to $(\hat X, \hat d)$, is also uniformly continuous. We show below that $(\hat X, \hat d, \hat f)$ has the shadowing property (the proof of the finite shadowing property is analogous).

Given $\varepsilon > 0$, let $0<\delta<\frac{\varepsilon}{3}$ be such that any $\delta$-pseudo-orbit of $f$ is $\frac{\varepsilon}{2}$-shadowed. Let $0<\delta_1<\frac{\delta}{3}$ be such that if $\hat d(x,y)<\delta_1$ then $\hat d(\hat f(x),\hat f(y))<\frac{\delta}{3}$. Assume that $\{\hat x_i\}$ is a $\frac{\delta}{3}$-pseudo-orbit of $\hat f$. Choose, for every $i\in\N$,
$x_i \in X$ with $\hat d(x_i, \hat x_i) < \delta_1$. Then, since
\begin{align*}
   d(f(x_i), x_{i+1})&=\hat d(\hat f(x_i), x_{i+1})\\
 &\leq \hat d(\hat f(x_i), \hat f(\hat x_i)) + \hat d( \hat f(\hat x_i), \hat x_{i+1}) + \hat d(\hat x_{i+1}, x_{i+1}) < \delta,
\end{align*}
there exists $z\in X$ which $\frac{\varepsilon}{2}$-shadows $\{x_i\}$ and so it $\varepsilon$-shadows
$\{\hat x_i\}$.

\end{proof}

The converse of the previous theorem is not true in general, as we show in the example below. 

\begin{Example}
There exists a dynamical system $(X,d,f)$, such that its completion $(\hat X, \hat d, \hat f)$  has the  shadowing property, but $(X,d,f)$ does not. \end{Example}
\noindent
{\em Construction:} 
 Take, for instance, $(\hat X, \hat d)$ as a complete metric space, and $\hat f:\hat X\to\hat X$ as a uniformly continuous map. Suppose additionally that  $(\hat X, \hat d, \hat f)$ has the shadowing property, is topologically transitive and that the set $X$ of all of its periodic points is dense in $\hat X$ (for example, one could take an Anosov diffeomorphism on a manifold or simply the doubling map on the circle \cite{coven}). It follows that $( X,  d, f)$, where $d:=\hat d_{|_X}$ and $f:=\hat f_{|_X}$, is a uniformly continuous dynamical system whose completion is $(\hat X, \hat d, \hat f)$, but $( X,  d, f)$ does not have the shadowing property, as we show below.
 
 Given $C_1,C_2\subset X$, two distinct periodic orbits, let $a:=d(C_1,C_2):=\min\{d(x,y):\ x\in C_1,\ y\in C_2\}>0$ and set $\varepsilon=a/3$. For any given $\delta>0$, let $C_3\subset X$ be a periodic orbit such that $d(C_1,C_3)<\delta$ and $d(C_3,C_2)<\delta$ (the existence of such $C_3$ is ensured by the fact that $(\hat X, \hat d, \hat f)$ is topologically transitive and $X$ is dense in $\hat X$). Let $r\in C_1$, $s,t\in C_3$ and $u\in C_2$ be such that $d(C_1,C_3)=d(r,s)$ and $d(C_3,C_2)=d(t,u)$. Take $\ell$ and $m$ positive integers such that $f^\ell(r)=r$ and $f^m(s)=t$, and define the sequence $\{x_i\}$ given by: 
$$x_i:=\left\{\begin{array}{lcl}f^i(r)               &,\ if& 0\leq i \leq \ell-1\\
                                                f^{i-\ell}(s)  &,\ if& \ell\leq i\leq \ell+ m-1\\
                                                f^{i-\ell-m}(u)  &,\ if& i\geq \ell+ m
                      \end{array}\right.$$

It follows that $\{x_i\}$ is a $\delta$-pseudo-orbit, but, since all the orbits in $X$ are periodic,  $\{x_i\}$ cannot be $\varepsilon$-shadowed by any point of $X$. 

\subsection{Symbolic Dynamics of Countable Alphabets}

We start the section by recalling the definition of shift spaces.

\begin{Definition}\label{defnshifts} Let $A$ be a non-empty countable set endowed with the discrete topology. The \emph{(one-sided) full shift on the alphabet $A$} is the set \[A^\N:=\{\x=(x_i)_{i\in\N}:\ x_i\in A\ \forall i\in\N\},\]
with the associated prodiscrete topology, i.e., the product of discrete topology. Moreover, on $A^\N$, we will use the product metric given on Equation~(\ref{productmetric}), which is equivalent to the usual metric for shift spaces, and in this context can be written as $d(\x,\y) = (i+1)^{-1}$, where $i$ is the least integer where $\x_i \neq \y_i$.
\end{Definition}

\begin{Definition} The \emph{shift map} is the map $\sigma:A^\N\to A^\N$ defined, for all $\x=(x_i)_{i\in\N}\in A^\N$, by $$\sigma(\x):=(x_{i+1})_{i\in\N}.$$
\end{Definition}

\begin{Definition} We say that $X\subseteq A^\N$ is a \emph{(one-sided) shift (or subshift) space} if it is a closed set and it is shift invariant (that is, $\sigma(X)\subseteq X$).
\end{Definition}

In a general space $X$, we will denote a sequence indexed by $I \subseteq \N$ as $(\x_n)_{n\in I}$. In the particular case that each $\x_n$ is itself a sequence, we will use the notation $\x_{n,i}$ to denote the $i^{th}$ entry of the sequence $\x_n$.

Given a shift space $X\subseteq A^\N$ and $n\in\N_+$, we define $\B_n(X)$ as the set of all words of length $n$ that appear in some sequence of $X$, that is, $$\B_n(X):=\{(a_0\ldots a_{n-1})\in A^n:\ \exists \ \x\in X \text{ s.t. } (x_0\ldots x_{n-1})=(a_0\ldots a_{n-1})\}.$$ Clearly $\B_n(A^\N)=A^n$. We also define $\B_0(X):=\{\varepsilon\}$, where $\varepsilon$ stands for the empty word.
The language of $X$ is the set $\B(X)$, which consists of all finite words that appear in some sequence of $X$, that is,
$$\B(X):=\bigcup_{n\in\N}\B_n(X).$$

A well-known equivalent way to define shift spaces is given in terms of forbidden words. Given $F\subset \B(A^\N)$, we can define $X_F\subseteq A^\N$ as the set of all sequences in $A^\N$ which do not contain any word of $F$. In particular, one can check that given a shift space $X\subseteq A^\N$,  setting $F:=\B(A^\N)\setminus \B(X)$, we have that $X=X_F$.

\begin{Definition}\label{defn:p-shifts} A shift space $X$ is said to {\em have order $p$}, for some $p\in\N$, if there is a set of words $F$ such that $X= X_F$ and every word in $F$ has length $p$. Following the standard usage in symbolic dynamics, we say that a shift is a 1-step shift, if it is a shift of order 2. 
\end{Definition}

The next proposition corresponds to \cite[Theorem 2.1.8]{lindMarcus} and gives an alternative characterization of shift spaces of finite order. Although in \cite{lindMarcus} it is stated for shift spaces over finite alphabets, the proof given there also works for shift spaces over infinite alphabets. Such a characterization will be used in Proposition~\ref{shadow-finiteshadow-porder} to characterize shift spaces with the shadowing property.  

\begin{Proposition}\label{prop:p-order_characterization}
$X$ is shift of order $p$ if, and only if, for all $\mathbf{u},\mathbf{v},\mathbf{w}\in \B(X)$, with $|\mathbf{v}| = p-1$ and $\mathbf{uv},\mathbf{vw}\in \B(X)$, it follows that $\mathbf{uvw}\in \B(X)$.
\end{Proposition}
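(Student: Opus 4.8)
The plan is to prove the two implications separately, using the forbidden-word description of shifts of finite order and the characterization of $\B(X)$ in terms of which words extend to bi-infinite (here, one-sided infinite) points of $X$.

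For the forward direction, suppose $X = X_F$ where every word in $F$ has length $p$. Take $\mathbf{u}, \mathbf{v}, \mathbf{w} \in \B(X)$ with $|\mathbf{v}| = p-1$ and $\mathbf{uv}, \mathbf{vw} \in \B(X)$; I want to show $\mathbf{uvw} \in \B(X)$. The key observation is that a word lies in $\B(X) = \B(X_F)$ if and only if none of its length-$p$ subwords belongs to $F$ (this needs the small lemma that a word with no forbidden subword can be completed to a point of $X_F$ — one appends symbols greedily, and since forbidden words have bounded length $p$, any dead end would already have been a length-$p$ forbidden subword; I would either cite this from \cite{lindMarcus} as part of Theorem 2.1.8 or include the one-line argument). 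Granting that, every length-$p$ subword of $\mathbf{uvw}$ is a subword either of $\mathbf{uv}$ or of $\mathbf{vw}$: indeed a window of length $p$ that is not contained in $\mathbf{uv}$ must start at or after the second letter of $\mathbf{u}$'s last... more carefully, since $|\mathbf{v}| = p-1$, any length-$p$ window straddling the $\mathbf{u}$--$\mathbf{w}$ boundary contains at most one letter beyond $\mathbf{v}$ on each side, so it fits inside $\mathbf{uv}$ if it does not reach into $\mathbf{w}$, and inside $\mathbf{vw}$ once it does. Hence no length-$p$ subword of $\mathbf{uvw}$ is in $F$, so $\mathbf{uvw} \in \B(X)$.

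For the converse, assume the gluing condition: whenever $\mathbf{uv}, \mathbf{vw} \in \B(X)$ with $|\mathbf{v}| = p-1$, also $\mathbf{uvw} \in \B(X)$. Define $F := \B(A^\N) \setminus \B(X)$ restricted to... more precisely set $F$ to be the set of words of length exactly $p$ that are \emph{not} in $\B(X)$, i.e. $F = A^p \setminus \B_p(X)$. I claim $X = X_F$. The inclusion $X \subseteq X_F$ is immediate since no word of $X$ contains a word outside $\B(X)$. For $X_F \subseteq X$, take $\x \in X_F$; then every length-$p$ subword of $\x$ lies in $\B_p(X)$. I must show $\x \in X$, equivalently every prefix $(x_0 \ldots x_{n-1})$ lies in $\B(X)$. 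Induct on $n$: for $n \le p$ this is clear because $(x_0 \ldots x_{n-1})$ is a subword of $(x_0 \ldots x_{p-1}) \in \B_p(X) \subseteq \B(X)$ and subwords of words in $\B(X)$ lie in $\B(X)$. For the inductive step, suppose $(x_0 \ldots x_{n-1}) \in \B(X)$ with $n \ge p$; write this as $\mathbf{u}\mathbf{v}$ with $\mathbf{v} = (x_{n-p+1} \ldots x_{n-1})$ of length $p-1$ and $\mathbf{u} = (x_0 \ldots x_{n-p})$. The word $\mathbf{v}w := (x_{n-p+1} \ldots x_n)$ has length $p$ and is a subword of $\x$, hence lies in $\B_p(X) \subseteq \B(X)$. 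Apply the gluing hypothesis to conclude $\mathbf{u}\mathbf{v}w = (x_0 \ldots x_n) \in \B(X)$, completing the induction. Since $X$ is closed, $\x = \lim (x_0 \ldots x_{n-1})^\frown(\text{arbitrary completion})$ forces $\x \in X$; alternatively, a point all of whose prefixes lie in $\B(X)$ is a limit of points of $X$ and hence in $X$ by closedness. Thus $X = X_F$ with all words of $F$ of length $p$, so $X$ has order $p$.

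The main obstacle, and the only place where real care is needed, is the completion lemma underlying the forward direction — that a finite word avoiding all members of $F$ (with $F$ consisting of length-$p$ words) actually appears in some point of $X_F$, so that "membership in $\B(X)$" can be tested locally by length-$p$ windows. Everything else is bookkeeping about how length-$p$ windows sit inside a concatenation $\mathbf{uvw}$ with $|\mathbf{v}| = p - 1$. Since the excerpt already states that the proof of \cite[Theorem 2.1.8]{lindMarcus} goes through verbatim over infinite alphabets, I would simply invoke that, noting that the only property used is that forbidden words have bounded length, which holds here; I would not reprove it.
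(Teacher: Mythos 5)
Your converse direction is correct: taking $F = A^p \setminus \B_p(X)$, the induction on prefixes using the gluing hypothesis, together with closedness of $X$, does show $X_F \subseteq X$, and this is essentially the argument the paper implicitly relies on via \cite[Theorem 2.1.8]{lindMarcus}. The forward direction, however, rests on a false lemma. It is \emph{not} true that a word belongs to $\B(X_F)$ as soon as none of its length-$p$ subwords lies in $F$: take $A=\{0,1\}$, $p=2$, $F=\{00,01\}$; then $X_F$ is the single point $111\cdots$, so $\B(X_F)=\{1^n : n\in\N\}$, yet the word $10$ contains no forbidden block. Your greedy-completion justification fails at exactly this point: a dead end occurs whenever every one-letter extension of the current length-$(p-1)$ suffix is forbidden, and this does not force a forbidden block inside the word already built (in the example, the suffix $0$ has only forbidden extensions although $10$ is $F$-free). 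This extendability statement is also not what Lind--Marcus Theorem 2.1.8 proves, so deferring to that citation does not close the gap.

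The repair is the standard gluing-of-points argument (which is the proof the paper is pointing to): since $\mathbf{vw}\in\B(X)$, choose $\y\in X$ whose prefix is $\mathbf{vw}$, and consider the infinite sequence $\z=\mathbf{u}\y$. Your window bookkeeping now applies to $\z$ rather than to an arbitrary completion of $\mathbf{uvw}$: a length-$p$ window of $\z$ starting inside $\mathbf{u}$ ends within the first $|\mathbf{u}|+p-1$ letters, i.e.\ inside $\mathbf{uv}$ (here $|\mathbf{v}|=p-1$ is used), and is therefore not in $F$ because $\mathbf{uv}\in\B(X)$ occurs in a point of $X=X_F$; every other window is a window of $\y\in X_F$. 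Hence $\z$ avoids $F$, so $\z\in X_F=X$, and since $\z$ begins with $\mathbf{uvw}$ we get $\mathbf{uvw}\in\B(X)$. In short, the combinatorial core of your forward direction (how length-$p$ windows sit across the junction) is right, but the infinite extension must be borrowed from an actual point of $X$ witnessing $\mathbf{vw}\in\B(X)$, not manufactured by the false local characterization of $\B(X_F)$.
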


\begin{Remark} Notice that if $A$ is finite, then the class of shift spaces with finite order always coincides with the class of shift spaces of {\em finite type} (SFT), i.e., those subshifts that can be obtained from finite sets of forbidden words. On the other hand, if $A$ is infinite, then the class of shift spaces of finite type is strictly contained in the class of shift spaces of finite order.
\end{Remark}

\subsection{Finite Shadowing vs Shadowing}
As we are dealing with possibly locally compact sets it is not straightforward that finite and infinite shadowing agree. We, therefore, provide a sufficient condition on general dynamical systems for finite and infinite shadowing to agree and also show that a shift space has finite order if, and only if, it has the (finite) shadowing property. In Section~\ref{ovelha}, in the context of Polish dynamical systems, we will describe precisely the relation between finite shadowing and shadowing, see Theorem~\ref{joelho} and Proposition~\ref{balance}.

It is well known that, for compact spaces, shadowing and finite shadowing are equivalent, see \cite[Lemma~1.1.1]{Pilyugin} for example. Following the general idea from there, below we show this fact for uniformly locally compact spaces.

\begin{Definition} We say that a metric space is {\em uniformly locally compact}, if there exists $\varepsilon>0$ such that for any $\x\in X$ the open ball centered at $\x$ with radius $\varepsilon$ is contained in a compact set.
\end{Definition}

\begin{Proposition}\label{UnifLCThm} Let $(X,d, f)$ be a dynamical system where $X$ is uniformly locally compact. Then $(X,d, f)$ has the shadowing property if, and only if, it has the finite shadowing property. 
\end{Proposition}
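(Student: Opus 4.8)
The plan is to prove the nontrivial direction: finite shadowing implies (infinite) shadowing, since the other direction is immediate from the definitions (an infinite pseudo-orbit restricted to an initial segment is a finite pseudo-orbit, so shadowing clearly gives finite shadowing). Fix $\varepsilon_0>0$ witnessing uniform local compactness: every open ball of radius $\varepsilon_0$ has compact closure. Given $\varepsilon>0$, we may assume $\varepsilon<\varepsilon_0$. Apply finite shadowing to obtain $\delta>0$ such that every finite $\delta$-pseudo-orbit is $\varepsilon/2$-shadowed by some point. Now let $(\x_n)_{n\in\N}$ be an infinite $\delta$-pseudo-orbit. For each $N\in\N$, the finite pseudo-orbit $(\x_0,\dots,\x_N)$ is $\varepsilon/2$-shadowed by some point $\y_N\in X$; in particular $d(\y_N,\x_0)<\varepsilon/2<\varepsilon_0$, so all the $\y_N$ lie in the ball $B(\x_0,\varepsilon_0)$, whose closure is compact.

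The key step is a diagonal/compactness argument: since $\{\y_N\}$ lies in a compact set, it has a convergent subsequence $\y_{N_k}\to\y$. I claim $\y$ is an $\varepsilon$-shadowing point for the whole infinite pseudo-orbit. Indeed, fix any $n\in\N$. For all $k$ with $N_k\ge n$ we have $d(f^n(\y_{N_k}),\x_n)<\varepsilon/2$. Since $f$ is continuous, $f^n$ is continuous, so $f^n(\y_{N_k})\to f^n(\y)$, and passing to the limit gives $d(f^n(\y),\x_n)\le \varepsilon/2<\varepsilon$. As $n$ was arbitrary, $\y$ $\varepsilon$-shadows $(\x_n)_{n\in\N}$, which establishes the shadowing property.

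The main subtlety — and the only place uniform local compactness (rather than mere local compactness) is used — is ensuring that \emph{all} the partial shadowing points $\y_N$ are trapped in a single compact set independent of $N$; this is exactly what the uniform radius $\varepsilon_0$ provides, after first shrinking the target tolerance $\varepsilon$ below $\varepsilon_0$ so that $d(\y_N,\x_0)<\varepsilon/2<\varepsilon_0$. Without uniformity, the relevant compact neighborhood could shrink as the base point varies, and the argument would fail (as the locally compact counterexample in the introduction, Example~\ref{ExMetricDep}, shows it must). A minor technical point to handle carefully is that continuity of $f^n$ is all that is needed to pass the inequality $d(f^n(\y_{N_k}),\x_n)<\varepsilon/2$ to the limit; no uniform continuity of $f$ is required here, only continuity, which is part of the definition of a dynamical system.
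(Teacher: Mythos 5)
Your proposal is correct and follows essentially the same route as the paper's proof: shrink $\varepsilon$ so the relevant ball around $\x_0$ has compact closure, take the finite shadowing points of the truncated pseudo-orbits, extract a limit point by compactness, and pass the $\varepsilon/2$-estimates to the limit using continuity of each $f^n$. The only cosmetic difference is that you extract a convergent subsequence where the paper works with an accumulation point, which in a metric space is the same argument.
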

\begin{proof} 

It is straightforward that if $(X,d, f)$ has the shadowing property then it has the finite shadowing property.

To prove the converse, given $\varepsilon>0$, take $\delta>0$ such that any finite $\delta$-pseudo-orbit is $\varepsilon/2$-shadowed by some point. Since $X$ is uniformly locally compact, we can assume, without loss of generality, that $\varepsilon$ is sufficiently small so that the closure of any open ball $B_{\varepsilon}(\x)$ is compact.

Given $(\x_i)_{i\in\N}\in X$ an infinite $\delta$-pseudo-orbit we can consider, for each $k\geq 1$, the finite $\delta$-pseudo-orbit $(\x_i)_{0\leq i\leq k}$. Let $\z_k\in X$ be the point that $\varepsilon/2$-shadows $(\x_i)_{0\leq i\leq k}$. Note that for each $k$ the point $\z_k$ belongs to $B_{\varepsilon/2}(\x_0)$ and, since $\overline{B_{\varepsilon/2}(\x_0)}$ is compact, there exists $\z\in B_{\varepsilon}(\x_0)$ which is an accumulation point of $(\z_k)_{k\geq 1}$. Since $f^i(\z_k)\in B_{\varepsilon/2}(\x_i)$ for all $k\geq 1$ and $0\leq i\leq k$, $f^i$ is continuous for all $i\geq 1$, and $\z$ is accumulation point of $(\z_k)_{k\geq 1}$, it follows that $f^i(\z)\in \overline{B_{\varepsilon/2}(\x_i)}\subset B_{\varepsilon}(\x_i)$ for all $i\geq 0$, that is, $\z$ $\varepsilon$-shadows $(\x_i)_{i\in \N}$.

\end{proof}
\begin{Corollary}\label{Cor:Rn}
The notions of finite shadowing and shadowing coincide in $\R ^n$ with the usual metric. 
\end{Corollary}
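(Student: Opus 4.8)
The plan is to deduce this as an immediate application of Proposition~\ref{UnifLCThm}. The only thing to check is that $\R^n$ with the usual Euclidean metric is uniformly locally compact in the sense of the definition just given, and then the equivalence of finite shadowing and shadowing for any continuous $f:\R^n\to\R^n$ follows directly from that proposition.

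First I would observe that for $\R^n$ with the usual metric, we may take $\varepsilon = 1$ (in fact any positive $\varepsilon$ works): for every $\x\in\R^n$, the open ball $B_1(\x)$ is contained in the closed ball $\overline{B_1(\x)}$, which is closed and bounded, hence compact by the Heine--Borel theorem. Thus there is a single $\varepsilon>0$ (namely $\varepsilon=1$) such that every open ball of radius $\varepsilon$ is contained in a compact set, which is exactly the statement that $\R^n$ is uniformly locally compact.

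Having verified the hypothesis, I would invoke Proposition~\ref{UnifLCThm} with $X=\R^n$ and $d$ the Euclidean metric: for any continuous $f:\R^n\to\R^n$, the dynamical system $(\R^n,d,f)$ has the shadowing property if and only if it has the finite shadowing property. Since this holds for every continuous self-map $f$, the two notions coincide on $\R^n$ with the usual metric, which is the claim.

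There is essentially no obstacle here; the corollary is a one-line consequence once uniform local compactness of $\R^n$ is noted, and Heine--Borel makes that immediate. If anything needs care, it is only the trivial remark that the definition of uniform local compactness requires the \emph{same} radius $\varepsilon$ to work at every point, which is automatic in $\R^n$ by translation invariance of the metric and compactness of closed bounded sets.
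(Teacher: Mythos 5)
Your proposal is correct and follows exactly the paper's route: the paper also deduces the corollary by applying Proposition~\ref{UnifLCThm} to $\R^n$, which is uniformly locally compact. Your extra verification via Heine--Borel (that a single $\varepsilon=1$ works at every point) just makes explicit what the paper leaves implicit.
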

\begin{proof}
This simply follows from applying Proposition~\ref{UnifLCThm} to the uniformly locally compact metric space $\R^n$.
\end{proof}

The following example shows that Proposition~\ref{UnifLCThm} is sharp.

\begin{Example}\label{FinShadowEx}
There exists a locally compact dynamical system $(X,d,f)$, with $X \subseteq \R$, that has the finite shadowing property but not the shadowing property. 
\end{Example}
\noindent
{\em Construction:}  Our space $X$ will be a subset of 
	\[ Y = \left \{ n+ \frac{1}{k}: k, n \in \N, k \ge 2 \right \}.
	\]
	It is clear that endowed with the metric of $\R$, any  $X \subseteq Y$ is locally compact as  each point of $X$ is an isolated point of $X$.
	
Let us  now proceed with the construction of our space $X$. 

We first enumerate the set of all finite sequences of  positive integers as $\{s_k\}_{k \in \N}$, i.e., $s_k = (n_{k,0}, \ldots , {n_{k, l(k)}})$ where $l(k)\in \N$  and $n_{k,i}$ is a positive integer. Moreover, we require that every finite sequence of positive integers occurs infinitely often in $\{s_k\}$, i.e., given $(m_0, \ldots, m_j)$, there are infinitely many $k$'s such that $s_k = (m_0, \ldots, m_j)$. 
	
We now construct $A_k$, a finite subset of $Y$, based on $s_k$. More specifically,  let $\{A_k\}_{k \in \N}$ be a sequence of subsets of $Y$ such that the following properties hold.

\begin{enumerate}
	\item $A_k \subseteq \bigcup_{i=0}^{l(k)} (i, i+\frac{1}{k+2})$.
	\item For each $k$ and $0 \le i \le l(k)$ we have that the cardinality of $A_k \cap (i, i+\frac{1}{k+2})$ is $n_{k,i}$.
	\item if $k \neq k'$, then $A_k \cap A_{k'} = \emptyset$.
\end{enumerate}
Finally, we define our space $X = \bigcup _{k \in \N} A_k$. 

We next define a map $f:X \rightarrow X$. It will have the property that $f(A_k) \subseteq A_k$ for all $k$. If $\x \in A_k$ is not the largest element of $A_k$, then $f(\x)$ is the smallest element of $A_k$ greater than $\x$. If $\x$ is the largest element of $A_k$, then $f(\x) = \x$. Clearly, $f$ is a well-defined continuous function on $X$. Moreover, the orbit of every $\x \in X$ is bounded under $f$. 

We now observe that $f$ does not have the shadowing property. For this it suffices to construct, for all $\delta >0$,  an unbounded $\delta$-pseudo-orbit.  Indeed, let $\delta >0$. Choose $N \in \N$ such that $\frac{1}{N} < \delta$. For each $i \in \N$ choose $k_i > N$ so that $l(k_i) > i$. Let $\x_i$ be the largest element of $A_{k_i} \cap  (i, i+\frac{1}{k_i+2})$. Note that for all $i \in \N$, $\x_i \in (i, i+\frac{1}{k_i+2}) \subseteq (i, i+\frac{1}{N})$. Moreover, as $l(k_i) > i$, we have that \[f(\x_i) \in (i+1, i+1+ \frac{1}{k_i+2})
\subseteq (i+1, i+1+ \frac{1}{N}).\] Hence, we have that $\x_{i+1}$ and $f(\x_i)$ are in $(i+1, i+1+ \frac{1}{N})$, implying that $\{\x_i\}_{i \in \N} $ is an unbounded $\delta$-pseudo-orbit.

We next show that $f$ has the finite shadowing property. Let $\varepsilon >0$. Let $N >2$ be  a positive integer such that $\frac{1}{N} < \varepsilon$. As each point of the finite set $\cup_{i=0}^N A_i$ is an isolated point of $X$, we may choose $0 < \delta < \frac{1}{N}$ sufficiently small so that a $\delta$ interval around any point of $\cup_{i=0}^N A_i$ is a singleton set. Let $\{\x_i\}_{i=0}^{j}$ be a finite $\delta$-pseudo-trajectory. By our choice of $\delta$, we have that the entire pseudo-trajectory $\{\x_i\}_{i=0}^{j}$ is a subset of $\cup_{i=0}^N A_i$, or the entire pseudo-trajectory $\{\x_i\}_{i=0}^{j}$ is a subset of $\cup_{i=N+1}^ {\infty} A_i$. In the former case, the pseudo-trajectory $\{\x_i\}_{i=0}^{j}$ is actually a trajectory and we are done. In the latter case, we proceed as follows. We first observe that by our construction of space $X$, map $f$ and the fact that $\delta < 1/2$, we have that if $\x_i \in (u, u+1)$ (for some $u\in \N$) then $\x_{i+1} \in (u, u+1)$ or $\x_{i+1} \in (u+1, u+2)$. Hence, the pseudo-trajectory $\{\x_i\}_{i=0}^{j}$ starts in some interval of the form $(u,u+1)$, then eventually proceeds to the interval $(u+1, u+2)$ and so on, until it terminates. Keeping this structure in mind, we proceed to approximate $\{\x_i\}_{i=0}^{j}$ by a real trajectory.
Let $u \in \N$ be such that $\x_0 \in (u, u +1)$ and let $v \in \N$ be such that $\x_j \in  (v, v+1)$. For each $u \le t \le v$, let $n_t$ be the cardinality of elements of $\{\x_i\}_{i=0}^{j}$ in the interval $(t,t+1)$. Hence the pseudo-trajectory $\{\x_i\}_{i=0}^{j}$ starts in $(u, u+1)$, and stay there $n_u$ times. Then it moves to $(u+1, u+2)$ and spends $n_{u+1}$ times there, etc. Now choose $k > N$ such that $s_k = (n_{k,0}, \ldots, n_{k,l(k)})$ has the property that $n_{k,t} = n_t$ for all $u \le t \le v$. Let $\y$ be the smallest element of $A_k$ in $ (u,u+1)$. We claim that $\y$ $\varepsilon$-shadows $\{\x_i\}_{i=0}^{j}$. Indeed, as $\{\x_i\}_{i=0}^{j}$ and $\{f^i(\y)\}_{i\in \N}$ are subsets of $ \bigcup _{i=N+1}^{\infty}A_i$, we have that the intersection of either one with $(t,t+1)$, $ u \le t \le v$, is actually contained in $(t, t+1/N)$. Moreover, both sequences start in $ (u,u+1/N)$ and spend the same amount of time in each interval before moving to the next one. As $\frac{1}{N} < \varepsilon$, we have that $\y$ $\varepsilon$-shadows $\{\x_i\}_{i=0}^{j}$, completing the proof.
\qed\\

In the case of a finite alphabet, it is a classical result of Walters that a subshift has the shadowing property if, and only if, it is a SFT, \cite[Theorem 3.33]{kurka}. Below we prove an analogous result for infinite alphabets, with the appropriate modifications. 

\begin{Proposition}\label{shadow-finiteshadow-porder} Let $A$ be a countable alphabet and $X\subseteq A^\N$ be a shift space. Then the following statements are equivalent:

\begin{enumerate}
\item\label{prop:SFT-finite_order} $X$ is a shift of finite order;
\item\label{prop:shadow} $(X, d_\Pi, \sigma)$ has the shadowing property;
\item\label{prop:finite_shadow} $(X, d_\Pi, \sigma)$ has the finite shadowing property.
\end{enumerate}

\end{Proposition}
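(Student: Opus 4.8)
The plan is to prove the cycle of implications $(\ref{prop:SFT-finite_order}) \Rightarrow (\ref{prop:shadow}) \Rightarrow (\ref{prop:finite_shadow}) \Rightarrow (\ref{prop:SFT-finite_order})$, exactly as in the classical finite-alphabet argument, but paying attention to where finiteness of the alphabet was being used and checking it is dispensable. The implication $(\ref{prop:shadow}) \Rightarrow (\ref{prop:finite_shadow})$ is immediate from the definitions (a finite pseudo-orbit extends to an infinite one, or one simply restricts the quantifiers), so the content is in the other two.

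For $(\ref{prop:SFT-finite_order}) \Rightarrow (\ref{prop:shadow})$: suppose $X = X_F$ with all words of $F$ of length $p$. Given $\varepsilon > 0$, pick $n \in \N_+$ with $(n+1)^{-1} < \varepsilon$; it suffices to $\varepsilon$-shadow with accuracy matching the first $n$ coordinates, so set $\delta = (N+1)^{-1}$ where $N = n + p$. Let $(\x_k)_{k\in\N}$ be a $\delta$-pseudo-orbit in $X$. The condition $d_\Pi(\sigma(\x_k), \x_{k+1}) < \delta$ means $\sigma(\x_k)$ and $\x_{k+1}$ agree on their first $N$ coordinates, i.e. $(x_{k,1}\,x_{k,2}\,\cdots\,x_{k,N}) = (x_{k+1,0}\,x_{k+1,1}\,\cdots\,x_{k+1,N-1})$. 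One then defines the candidate shadowing point $\z$ coordinatewise by $z_i = x_{i,0}$ (the $0$th coordinate of the $i$th pseudo-orbit element); the overlapping-agreement just noted shows that reading $\z$ along any window of length $N \ge p$ reproduces a length-$N$ subword of some $\x_k \in X$, hence contains no forbidden word, so $\z \in X_F = X$ by Proposition~\ref{prop:p-order_characterization} (glueing along overlaps of length $p-1$). Finally $\sigma^k(\z)$ and $\x_k$ agree on the first $n$ coordinates for every $k$ (again by the overlap identities), so $d_\Pi(\sigma^k(\z),\x_k) \le (n+1)^{-1} < \varepsilon$. The only place the argument could stumble is verifying that $\z$ actually lies in $X$ and not merely in $A^\N$; this is precisely what the finite-order characterization Proposition~\ref{prop:p-order_characterization} is for, and it holds verbatim over countable alphabets, so there is no real obstacle.

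For $(\ref{prop:finite_shadow}) \Rightarrow (\ref{prop:SFT-finite_order})$ — which I expect to be the genuinely delicate direction, and the one where the infinite alphabet must be handled with care — assume $(X,d_\Pi,\sigma)$ has the finite shadowing property and aim to apply the gluing criterion of Proposition~\ref{prop:p-order_characterization}. Take $\varepsilon = \tfrac12$ (so $\varepsilon$-shadowing forces agreement in coordinate $0$) and let $\delta = (p+1)^{-1}$ be the corresponding finite-shadowing constant for some suitable $p \in \N_+$; I would then show $X$ has order $p$ for this $p$. Given $\mathbf{u},\mathbf{v},\mathbf{w} \in \B(X)$ with $|\mathbf v| = p-1$ and $\mathbf{uv},\mathbf{vw}\in\B(X)$, pick $\x,\y\in X$ with $\x$ beginning $\mathbf{uv}$ and $\y$ beginning $\mathbf{vw}$ (after shifting, arrange $\y$ to begin with $\mathbf{v w}$ and $\x$ so that the copy of $\mathbf v$ sits in coordinates $|\mathbf u|,\dots,|\mathbf u|+p-2$). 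Build a finite pseudo-orbit that follows $\sigma^k(\x)$ for $k = 0,\dots,|\mathbf{u}|$ and then jumps to following $\sigma^{k-|\mathbf u|}(\y)$; the jump is legal because at the splice point the two orbit segments share the block $\mathbf v$ of length $p-1$, so the relevant $d_\Pi$-distance is at most $(p)^{-1} < \delta$. A point $\z$ that $\tfrac12$-shadows this finite pseudo-orbit has $z_k$ equal to the $0$th coordinate of the $k$th pseudo-orbit entry, which reconstructs $\mathbf{uvw}$ as an initial block of $\z\in X$, whence $\mathbf{uvw}\in\B(X)$; then Proposition~\ref{prop:p-order_characterization} gives $X = X_F$ with $F$ consisting of length-$p$ words.

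The subtle point in this last direction, and where I'd be most careful, is the bookkeeping of indices and the choice of $p$: one needs $p$ large enough that $\delta = (p+1)^{-1}$ is the shadowing constant for $\varepsilon = \tfrac12$, yet the gluing lemma must be invoked with $|\mathbf v| = p-1$ for the \emph{same} $p$ — so really one should argue that $X$ has order $p$ where $\delta(\tfrac12) = (p)^{-1}$ or similar, adjusting constants so the two roles of $p$ match. Aside from this matching, nothing in either implication uses finiteness of $A$: the metric $d_\Pi$ reduces everything to agreement on finitely many coordinates, pseudo-orbits are spliced combinatorially, and the shadowing point is defined coordinatewise without any compactness or finite-branching input. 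So the real work is (i) citing Proposition~\ref{prop:p-order_characterization} correctly and (ii) getting the index arithmetic between $\varepsilon$, $\delta$, and $p$ consistent; there is no structural obstacle beyond that.
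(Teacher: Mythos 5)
Your proposal is correct and follows essentially the same route as the paper: define the shadowing point coordinatewise from the zeroth coordinates of the pseudo-orbit and use the order-$p$ condition to see it lies in $X$ for $(\ref{prop:SFT-finite_order})\Rightarrow(\ref{prop:shadow})$, and for $(\ref{prop:finite_shadow})\Rightarrow(\ref{prop:SFT-finite_order})$ splice orbit segments along the common block $\mathbf v$ and invoke the gluing characterization of Proposition~\ref{prop:p-order_characterization}. The only adjustment needed is the constant matching you already flag: take $\varepsilon<1$, let $\delta$ be the finite-shadowing constant it yields, and then choose $p$ with $p^{-1}<\delta$ (so the splice distance $\le p^{-1}$ beats $\delta$), which is exactly how the paper resolves it.
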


\begin{proof} \phantom\\

$(i) \Longrightarrow (ii)$
Suppose that $X$ is a shift of order $p$. Given $\varepsilon>0$, take $\delta:=(k+1)^{-1}$, where $k\geq p-1$ is an integer such that $(k+1)^{-1}<\varepsilon$. Now, let $(\x_i)_{i\in\N}$ be any infinite $\delta$-pseudo-orbit. Since $d_\Pi\big(\sigma(\x_i),\x_{i+1}\big)<\delta=(k+1)^{-1}$, it follows that $x_{i,1}x_{i,2}...x_{i,k+1}=x_{i+1,0}x_{i+1,1}...x_{i+1,k}$, and recursively we get that $x_{i,\ell}=x_{i+j,\ell-j}$ for all $i\in\N$, $1\leq \ell\leq k+1$ and $0\leq j\leq \ell$. In particular, this implies that \begin{equation}\label{eq:finite_order-shadow}x_{i,1}x_{i,2}...x_{i,k+1}=x_{i+1,0}x_{i+2,0}...x_{i+k+1,0}\end{equation} for all $i\in\N$.  Let $\z=(x_{n,0})_{n\in\N}$. Equation  \eqref{eq:finite_order-shadow}, $k +1 \ge p$, and that $X$ is of order $p$, implies that $\z$ lies in $X$.  Finally, note that \eqref{eq:finite_order-shadow} also implies that $d_\Pi(\sigma^i(\z),\x_i)<(k+1)^{-1}<\varepsilon$.\\

$(ii) \Longrightarrow (iii)$ It is direct.\\

$(iii) \Longrightarrow (i)$ To prove that $X$ is a shift of finite order, we use Proposition~\ref{prop:p-order_characterization}. More precisely, we will show that there exists $p\geq 0$ such that for all $\mathbf{u},\mathbf{v},\mathbf{w}\in \B(X)$ with $|\mathbf{v}|= p-1$, and $\mathbf{uv},\mathbf{vw}\in \B(X)$, we have that $\mathbf{uvw}\in \B(X)$.

Take $0<\varepsilon<1$, and let $\delta>0$ be such that any finite $\delta$-pseudo-orbit is $\varepsilon$-shadowed by some point. Take $p\in\N$ so that $p^{-1}<\delta$. Suppose 
$\mathbf{u}=u_0u_1...u_m,\mathbf{v}=v_{m+1}v_{m+2}...v_{m+p-1},\mathbf{w}=w_{m+p}w_{m+p+1}...w_{m+p+n}\in \B(X)$ are such that $\mathbf{uv},\mathbf{vw}\in \B(X)$ (if $p=1$ consider $\mathbf{v}=\epsilon$, the empty letter such that $\mathbf{uv}=\mathbf{u}$ and $\mathbf{vw}=\mathbf{w}$). Consider $(\x_i)_{0\leq i\leq m+p+n}$ defined as follows: $\x_0$ is any sequence of $X$ starting with the word $\mathbf{uv}$; for $i=1,..., m$ define $\x_i:=\sigma^i(\x_0)$; $\x_{m+1}$ is any sequence of $X$ starting with the word $\mathbf{vw}$; for $i=m+2,..., m+p+n$ define $\x_i:=\sigma^{i-m-1}(\x_{m+1})$. It follows that $(\x_i)_{0\leq i\leq m+p+n}$ is a finite $\delta$-pseudo-orbit since for all $i=0,..., m+p+n-1$ we have that $\sigma(\x_i)$ and $\x_{i+1}$ coincide at least in the first $p-1$ entries, and therefore $d_\Pi\big(\sigma(\x_i),\x_{i+1}\big)\leq p^{-1}<\delta$. Now, let $\z\in X$ be a point that $\varepsilon$-shadows $(\x_i)_{0\leq i\leq m+p+n}$. Since $d_\Pi\big(\sigma^i(\z),\x_i\big)<\varepsilon<1$ for all $i=0,...,m+p+n$, it follows that $z_i=x_{i,0}$ for all $i=0,...,m+p+n$, which means that the sequence $z$ starts with the word $\mathbf{uvw}$, and then $\mathbf{uvw}\in \B(X)$.
\end{proof}

\section{Defining Sequences, Ultrametric Spaces and $P$-adics}
In this section, we define three notions that are essential for our main results. Our definitions go from the most general to specific. The notion of a tame defining sequence is new as far as we know. Of course, ultrametric spaces and $p$-adics are rather well known. 

\subsection{Defining sequences and ultrametric spaces} 

Recall that a {\em partition} $\mathcal{U}$ of a space $X$ is a covering by pairwise disjoint, nonempty clopen sets. Since our spaces are always separable, the partitions we work with are always countable. For $\x\in X$, we denote by $\mathcal U [\x]$ the carrier of $\x$, that is, the unique element of $\mathcal U$ which contains $\x$.

\begin{Definition}\label{nintendoDS}
 A \emph{defining sequence} of a space $X$ is a sequence $\mathcal A = \{\m{U}{n}\}_{n \in \N}$ of partitions of $X$ such that:
  \begin{enumerate}
      \item\label{refin} $\m{U}{n+1}$ is a refinement of $\m{U}{n}$, i.e., each element of $\m{U}{n+1}$ is a subset of some (necessarily unique) element of $\m{U}{n}$.
      \item\label{blob} The collection $\{U: U \in \m{U}{n}, \text{ for some } n \in \N\}$ is a basis for the topology in $X$.
           \end{enumerate}
Furthermore, a defining sequence $\mathcal A$ is called {\em complete} when it satisfies the following condition
\begin{itemize}
\item If $U_n \in \m{U}{n}$ with $U_{n+1} \subseteq U_n$ for all $n\in \N$, then $\bigcap_n U_n$ is nonempty.
\end{itemize}
\end{Definition}

\begin{Remark}
Notice that if $\mathcal A$ is a complete defining sequence and $\{U_n\}$ is a sequence as above, then $\bigcap_n U_n$ has exactly one element. Indeed, let $\x_1\in \bigcap_n U_n$, $\x_2\neq \x_1$, and take $V$ as an open set that contains $\x_1$ but does not contain $\x_2$. Then, by item~\ref{blob} in Definition~\ref{nintendoDS}, there exists an $N\in \N$ such that $\x_1\in U_N \subseteq V$. Hence $\x_2$ does not belong to $\bigcap_n U_n$.
\end{Remark}

We will be interested in spaces that admit (complete) defining sequences. In the next result we show that this is the case for (Polish) zero-dimensional spaces. 

\begin{Proposition}
A zero-dimensional space $X$ admits a defining sequence. Furthermore, if $X$ is Polish then it admits a complete defining sequence.
\end{Proposition}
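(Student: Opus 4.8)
The plan is to construct the partitions explicitly from a basis of clopen sets, building them level by level so that refinement and completeness come out automatically. First I would note that a zero-dimensional second-countable space has a countable basis $\{B_k\}_{k\in\N}$ consisting of clopen sets. I will build $\m{U}{n}$ by induction on $n$. For $\m{U}{0}$, take the trivial partition $\{X\}$. Given $\m{U}{n}$, to obtain $\m{U}{n+1}$ I want to (a) refine using the $n$-th basic clopen set $B_n$, splitting each $U\in\m{U}{n}$ into the two clopen pieces $U\cap B_n$ and $U\setminus B_n$ (discarding any that are empty), and (b) ensure the basis condition \ref{blob} is met by making sure $B_n$ itself is eventually expressible as a union of partition elements — which the splitting in (a) already guarantees, since after stage $n$ every partition element is either inside $B_n$ or disjoint from it. Thus $\{U : U\in\m{U}{n},\ n\in\N\}$ refines every $B_k$ and hence forms a basis, giving \ref{blob}; and \ref{refin} holds by construction since each $\m{U}{n+1}$ only subdivides elements of $\m{U}{n}$.

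For the completeness statement, suppose $X$ is Polish, so it carries a complete admissible metric $d$. The subtlety is that the partitions constructed above need not have elements of small diameter, so a decreasing chain $\{U_n\}$ with $U_{n+1}\subseteq U_n$ need not have diameters shrinking to zero, and one cannot directly invoke completeness of $d$. The fix is to interleave a second requirement into the inductive construction: at stage $n$, after performing the split by $B_n$, I further subdivide each element $U$ of the current partition into finitely or countably many clopen pieces each of $d$-diameter less than $2^{-n}$ (possible because $U$ is clopen, hence a zero-dimensional separable metrizable space, and can be covered by basic clopen sets of small diameter, which can be disjointified since the space is zero-dimensional). With this extra condition, any decreasing chain $U_0\supseteq U_1\supseteq U_2\supseteq\cdots$ with $U_n\in\m{U}{n}$ has $\operatorname{diam}(U_n)<2^{-n}\to 0$; picking $\x_n\in U_n$ yields a Cauchy sequence, whose limit $\x$ lies in $\overline{U_n}=U_n$ (each $U_n$ is closed) for every $n$, so $\bigcap_n U_n\neq\emptyset$.

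The main obstacle I anticipate is bookkeeping: one must merge the two inductive demands — "refine by $B_n$" for the basis property and "refine into small-diameter pieces" for completeness — into a single well-defined recursion, while keeping each $\m{U}{n}$ a genuine partition (pairwise disjoint, nonempty, clopen, covering $X$) and keeping it countable. None of these steps is deep: disjointifying a countable clopen cover $\{C_j\}$ into $\{C_j\setminus\bigcup_{i<j}C_i\}$ preserves clopenness and countability, and intersecting countably many countable partitions (or rather, successively refining) stays countable. I would present the construction as a single induction producing $\m{U}{n+1}$ from $\m{U}{n}$ by: first split every element by $B_n$; then, in the Polish case, split every resulting element into countably many clopen pieces of diameter $<2^{-n-1}$; then discard empty pieces. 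Verifying \ref{refin}, \ref{blob}, and (in the Polish case) the completeness clause is then a routine check along the lines sketched above.
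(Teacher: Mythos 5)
Your proof is correct. It differs from the paper's in one structural choice: the paper fixes an admissible metric $d$ at the outset (complete in the Polish case) and builds $\m{U}{n+1}$ by covering each element of $\m{U}{n}$ with clopen sets of diameter less than $\frac{1}{n+1}$, taking a countable subcover and disjointifying; the shrinking mesh then yields the basis condition \emph{and}, via the Cantor Intersection Theorem, the completeness clause in one stroke. You instead refine against a fixed countable clopen basis $\{B_k\}$, one basis element per stage, which makes the first statement purely topological --- no metric is chosen at all --- since the carrier of any point $\x\in B_k$ is contained in $B_k$ from stage $k+1$ on. The price is that in the Polish case you must graft a second, metric, requirement (pieces of diameter $<2^{-n}$) onto the recursion and then re-derive the Cantor intersection argument by hand (Cauchy sequence of chosen points, limit in each closed $U_n$); this works, and the disjointification of countable clopen covers you rely on is exactly the paper's trick. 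In short, your route buys a metric-free proof of the first assertion at the cost of a slightly heavier bookkeeping in the second, while the paper's single diameter-control mechanism handles both at once.
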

\begin{proof}
Choose a metric $d$ for $X$ and, if $X$ is Polish, then choose $d$ complete. Let $\mathcal{U}_0=\{X\}$. We will proceed by induction.

Suppose that $\mathcal{U}_k$ has been defined for each $k\leq n$. For each $\x \in X$, choose a clopen neighborhood contained in $\mathcal{U}_n[\x]$ and with diameter less than $\frac{1}{n+1}$. The collection of all such neighborhoods cover $X$. Let $V_0,V_1,\ldots$ be a countable subcover and define $U_k=V_k\setminus \bigcup_{i<k} V_i$. Let $\mathcal{U}_{n+1}=\{U_k:k\in \N, U_k\neq \emptyset\} $. Notice that $\{\mathcal{U}_{n}\}_{n\in\N}$ is a defining sequence. Indeed, if $\varepsilon>0$ and $\x\in X $ then $\mathcal{U}_n[\x]\subset B_\varepsilon(\x)$ when $\frac{1}{n+1}<\varepsilon$ and hence item~\ref{blob} in Definition~\ref{nintendoDS} is satisfied.

If the metric is complete, the condition required for $\{\mathcal{U}_{n}\}_{n\in\N}$ to be a complete defining sequence follows from the Cantor Intersection Theorem, since the $d$ diameters of the nonempty closed sets $U_n$ converges to zero.

\end{proof}

As it happens, the converse of the second statement of the above result is true, that is, if a zero-dimensional space admits a complete defining sequence then it is Polish. To prove this we need to introduce a few concepts first, starting with ultrametric spaces.

\begin{Definition}\label{jeronimo}
An \emph{ultrametric space}
is a metric space $(X,d)$ where the ultrametric inequality holds, that is, \[d(\x,\z)\leq \max \{d(\x,\y),d(\y,\z)\}\] for all $\x,\y,\z\in X$.
\end{Definition}
We notice that a metric space $(X,d)$ is  an ultrametric space if, and only if, for all $\varepsilon>0$ the set $\{(\x,\y):d(\x,\y)<\varepsilon\}$ is an open (and hence clopen) equivalence relation. This implies the following properties which we will use in sequel.
\begin{enumerate}[label=(\subscript{UM}{{\arabic*}})]
    \item\label{patinete} All balls of strictly positive radius are both open and closed in the induced topology.
    \item\label{triciclo} If $d(\x,\y)<r$ then $B(\x,r)=B(\y,r)$, where $B(\z,r)$ denotes the ball centered at $z$ of radius $r$.
    \item\label{bike} Either the intersection of two balls is empty or one is contained in the other. 
    \item\label{hoverboard} The distance between any two distinct balls of radius $r>0$ is $r$ or greater.
\end{enumerate}

\begin{Remark}
Shifts on countable alphabets with the metric of Definition~\ref{defnshifts} are ultrametric spaces. 
\end{Remark}

We next show that defining sequence give rise to a natural ultrametric.
If $\mathcal{U}$ is a partition of $X$ then  \[ E_\mathcal{U} := \{(\x,\y):\x\in X, \y\in \mathcal U[\x]\} = \bigcup_{\x\in X} \mathcal U[\x] \times \mathcal U [\x] \] is a clopen equivalence relation, with $\mathcal U$ as the set of equivalence classes.

Let $\mathcal A=\{\mathcal{U}_n\}$ be a defining sequence of $X$. For $\x,\y\in X$ define the ultrametric associated with $\mathcal{A}$ by 
\begin{equation}\label{polvo}
u_\mathcal A (\x,\y):= \frac{1}{1+j}, \textit{ where } j = \inf_n \left\{ \mathcal{U}_n[\x] \neq \mathcal{U}_n[\y]\right\} .
\end{equation}
We then have the following.

\begin{Proposition}\label{toalha} Let $\mathcal A=\{\mathcal{U}_n\}$ be a defining sequence of $X$ and $u_\mathcal A$ as above. Then $u_\mathcal A$ is an admissible ultrametric on $X$ which is complete if, and only if, $\mathcal A$ is complete.
\end{Proposition}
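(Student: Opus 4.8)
The plan is to verify directly from the definition \eqref{polvo} that $u_{\mathcal A}$ is an ultrametric, that it is admissible (i.e.\ induces the topology of $X$), and then to characterize completeness of $u_{\mathcal A}$ in terms of completeness of $\mathcal A$.

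\textbf{Step 1: $u_{\mathcal A}$ is an ultrametric.} Symmetry is immediate from the symmetry of the condition $\mathcal U_n[\x]\neq \mathcal U_n[\y]$. For positivity and the identity of indiscernibles: if $\x=\y$ then $\mathcal U_n[\x]=\mathcal U_n[\y]$ for all $n$, so the infimum is over the empty set, which we take to be $+\infty$, giving $u_{\mathcal A}(\x,\y)=0$; conversely if $\x\neq\y$, then by item~\ref{blob} of Definition~\ref{nintendoDS} there is some $n$ and some $U\in\mathcal U_n$ with $\x\in U$ and $\y\notin U$, so $\mathcal U_n[\x]\neq \mathcal U_n[\y]$, the infimum $j$ is a finite natural number, and $u_{\mathcal A}(\x,\y)=\frac{1}{1+j}>0$. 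For the ultrametric inequality, fix $\x,\y,\z$ and let $j_{\x\y},j_{\y\z},j_{\x\z}$ be the respective infima (allowing $+\infty$). The key observation is a monotonicity/nesting fact coming from item~\ref{refin}: if $\mathcal U_n[\x]=\mathcal U_n[\y]$, then $\mathcal U_m[\x]=\mathcal U_m[\y]$ for all $m\leq n$ (since $\mathcal U_m[\cdot]$ is the unique member of $\mathcal U_m$ containing $\mathcal U_n[\cdot]$). Hence the set of indices $n$ where two points agree is an initial segment of $\N$, and $\mathcal U_n[\x]=\mathcal U_n[\y]$ holds precisely for $n<j_{\x\y}$ (and for all $n$ if $j_{\x\y}=+\infty$). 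Now if $n<\min\{j_{\x\y},j_{\y\z}\}$ then $\mathcal U_n[\x]=\mathcal U_n[\y]=\mathcal U_n[\z]$, so $n<j_{\x\z}$; this gives $j_{\x\z}\geq \min\{j_{\x\y},j_{\y\z}\}$, which translates to $u_{\mathcal A}(\x,\z)\leq\max\{u_{\mathcal A}(\x,\y),u_{\mathcal A}(\y,\z)\}$ since $t\mapsto\frac{1}{1+t}$ is decreasing.

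\textbf{Step 2: $u_{\mathcal A}$ is admissible.} I must show the $u_{\mathcal A}$-topology coincides with the topology of $X$. For one inclusion, observe that for fixed $n$, $u_{\mathcal A}(\x,\y)<\frac{1}{1+n}$ if and only if $\mathcal U_m[\x]=\mathcal U_m[\y]$ for all $m\leq n$, i.e.\ if and only if $\y\in\mathcal U_n[\x]$ (using the nesting from Step 1, agreement at level $n$ forces agreement at all lower levels). Thus the ball $B_{u_{\mathcal A}}(\x,\frac{1}{1+n})$ equals the clopen set $\mathcal U_n[\x]\in\mathcal U_n$; in particular every $u_{\mathcal A}$-ball is open in $X$, so the $u_{\mathcal A}$-topology is coarser than (contained in) the topology of $X$. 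Conversely, by item~\ref{blob} the sets $\{U: U\in\mathcal U_n \text{ some } n\}$ form a basis for $X$; each such $U$, being an element of some $\mathcal U_n$, equals $\mathcal U_n[\x]=B_{u_{\mathcal A}}(\x,\frac{1}{1+n})$ for any $\x\in U$, hence is $u_{\mathcal A}$-open. So the topology of $X$ is contained in the $u_{\mathcal A}$-topology, and the two agree.

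\textbf{Step 3: completeness.} Suppose first $\mathcal A$ is complete; I show $(X,u_{\mathcal A})$ is complete. Let $(\x_k)$ be a $u_{\mathcal A}$-Cauchy sequence. For each $n$, Cauchyness gives $K_n$ such that $u_{\mathcal A}(\x_k,\x_{k'})<\frac{1}{1+n}$ for $k,k'\geq K_n$, i.e.\ $\x_k$ and $\x_{k'}$ have the same carrier $V_n:=\mathcal U_n[\x_{K_n}]\in\mathcal U_n$ for all $k\geq K_n$. One checks (taking $K_n$ increasing) that $V_{n+1}\subseteq V_n$: for $k$ large enough to lie in both stabilized tails, $\x_k\in V_n\cap V_{n+1}$, and since $\mathcal U_{n+1}$ refines $\mathcal U_n$ we get $V_{n+1}\subseteq V_n$. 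By completeness of $\mathcal A$, $\bigcap_n V_n$ is nonempty; let $\x$ be its (unique) element. Then for $k\geq K_n$, both $\x_k$ and $\x$ lie in $V_n=\mathcal U_n[\x]$, so $u_{\mathcal A}(\x_k,\x)<\frac{1}{1+n}$, proving $\x_k\to\x$. For the converse, suppose $\mathcal A$ is not complete, so there is a decreasing chain $U_n\in\mathcal U_n$, $U_{n+1}\subseteq U_n$, with $\bigcap_n U_n=\emptyset$. Pick $\x_n\in U_n$ for each $n$. For $k,k'\geq n$ we have $\x_k,\x_{k'}\in U_k\cup U_{k'}\subseteq U_n$, and $U_n=\mathcal U_n[\x_n]$, so $\x_k$ and $\x_{k'}$ have the same $\mathcal U_n$-carrier, whence $u_{\mathcal A}(\x_k,\x_{k'})<\frac{1}{1+n}$; thus $(\x_n)$ is $u_{\mathcal A}$-Cauchy. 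If it had a limit $\x$, then for every $n$ we would have $u_{\mathcal A}(\x_k,\x)<\frac{1}{1+n}$ for large $k$, forcing $\x\in\mathcal U_n[\x_k]=U_n$ for every $n$, contradicting $\bigcap_n U_n=\emptyset$. So $(X,u_{\mathcal A})$ is not complete.

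The only mildly delicate point — and the one I would be most careful about — is the nesting observation in Step 1 that the indices of agreement form an initial segment of $\N$; everything else (admissibility, both directions of completeness) follows cleanly once that is in place. It is worth spelling out explicitly because it is used silently three times: in the ultrametric inequality, in identifying $u_{\mathcal A}$-balls with partition elements, and in the Cauchy-sequence argument.
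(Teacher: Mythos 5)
Your proof is correct and takes essentially the same route as the paper's: identify $u_{\mathcal A}$-balls of radius $\frac{1}{1+n}$ with the partition elements $\mathcal{U}_n[\x]$ for admissibility, and use the stabilization of carriers along Cauchy sequences together with completeness of $\mathcal A$ for the completeness equivalence. The only cosmetic differences are that the paper obtains the ultrametric property by observing that $\{(\x,\y):u_{\mathcal A}(\x,\y)<\varepsilon\}$ is the clopen equivalence relation $E_{\mathcal{U}_{n}}$ instead of checking the strong triangle inequality by hand, and it invokes the Cantor Intersection Theorem where you prove the contrapositive with an explicit non-convergent Cauchy sequence.
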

\begin{proof}
Notice that, for any $\varepsilon$ with $\frac{1}{n+1}<\varepsilon\leq \frac{1}{n}$ the set $\{(\x,\y):u_{\mathcal{A}}(\x,\y)<\varepsilon\}=E_{\mathcal{U}_{n-1}}$. As this is an equivalence relation, $u_\mathcal A$ is an ultrametric and, since $B_\varepsilon(\x)=\mathcal{U}_{n-1}[\x]$, it follows that $u_\mathcal A$ is an admissible metric.

If $u_\mathcal A$ is complete then completeness of $\mathcal A$ follows from the Cantor Intersection Theorem. Conversely, if $\{\x_n\}$ is a $u_\mathcal A$ Cauchy sequence, then for every $n$ there exists $N$ such that $U_n=\mathcal{U}_n[\x_N]=\mathcal{U}_n[\x_{N+k}]$ for all $k\in \N$. Clearly, $U_{n+1}\subset U_n$ and if $\x$ is the point in the intersection given by the completeness of $\mathcal A$ then $\{\x_n\}$ converges to $x$.
\end{proof}

\begin{Corollary}
A zero-dimensional space admits a complete defining sequence if, and only if, it is Polish.
\end{Corollary}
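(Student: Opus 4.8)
The plan is to derive this corollary directly by combining the previous two results, Proposition~\ref{toalha} and the immediately preceding proposition (that a Polish zero-dimensional space admits a complete defining sequence), observing that both implications are already essentially in hand.

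First I would handle the forward direction. Suppose the zero-dimensional space $X$ admits a complete defining sequence $\mathcal A = \{\mathcal U_n\}$. By Proposition~\ref{toalha}, the associated ultrametric $u_{\mathcal A}$ is an admissible metric on $X$, and it is complete precisely because $\mathcal A$ is complete. Hence $X$ carries a complete admissible metric, so by definition $X$ is Polish.

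For the converse, suppose $X$ is a zero-dimensional Polish space. Then the preceding proposition applies verbatim (its second statement): $X$ admits a complete defining sequence. This closes the equivalence, and there is essentially nothing left to prove — the corollary is a bookkeeping consequence of the two propositions, with Proposition~\ref{toalha} supplying the nontrivial content (that completeness of the defining sequence transfers to completeness of the canonically associated ultrametric).

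Since the argument is this short, I do not anticipate a genuine obstacle; the only point requiring care is making sure the ambient standing hypotheses (separable, metrizable) are in force so that ``Polish'' is equivalent to ``admits a complete metric'' in the sense used in the preamble, and that zero-dimensionality is carried along on both sides of the equivalence — but both are immediate from the running assumptions on all spaces in the paper.

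\begin{proof}
If $X$ admits a complete defining sequence $\mathcal A$, then by Proposition~\ref{toalha} the ultrametric $u_{\mathcal A}$ is an admissible complete metric on $X$, so $X$ is Polish. Conversely, if $X$ is a zero-dimensional Polish space, then it admits a complete defining sequence by the preceding proposition.
\end{proof}
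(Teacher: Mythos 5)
Your proposal is correct and matches the paper's intent exactly: the corollary is stated without proof precisely because it is the combination of the earlier proposition (a zero-dimensional Polish space admits a complete defining sequence) with Proposition~\ref{toalha} (a complete defining sequence yields a complete admissible ultrametric $u_{\mathcal A}$, hence Polishness). Nothing further is needed.
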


For our main theorem, we will need a ``tame" defining sequence. More precisely, we have the following definition.

\begin{Definition}\label{tameDS}
  Let $(X,d)$ be a metric space and $\{\m{U}{n}\}_{n \in \N}$ be a defining sequence of $X$. For all $n\in \N$, let \[S_n=\sup\{\text{diam}(O): O \in \m{U}{n}\}\] 
where $\text{diam}(O)$ stands for the diameter of the set $O$. We say that $\{\m{U}{n}\}_{n \in \N}$ equipped with the metric $d$ is a \emph{tame} defining sequence of $X$ if $S_n \rightarrow 0$ as $n\rightarrow\infty$ and, for all $n\in \N$, there exists $\rho_n >0$ such that if $O_1, O_2$ are distinct element of $\m{U}{n}$ and $\x_i \in O_i$, then $d(\x_1,\x_2) \ge \rho_n$. 
For such $\rho_n$, we say that \emph{$\m{U}{n}$ is $\rho_n$-separated. }
\end{Definition}


Notice that while a defining sequence is a topological concept, tameness is a metric space concept. Clearly, replacing the metric by a uniformly equivalent metric preserves tameness. More precisely, we have the following.

\begin{Proposition}\label{berries}
Any defining sequence $\mathcal A$ on space $X$ equipped with the associated ultrametric $u_\mathcal A$ is tame. Furthermore, a defining sequence  $\mathcal A$ equipped with an admissible metric $d$ is tame  if, and only if, $d$ is uniformly equivalent to $u_\mathcal A$.
\end{Proposition}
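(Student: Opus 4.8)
The plan is to split the proof into two parts corresponding to the two assertions. For the first assertion, I would show directly that any defining sequence $\mathcal A=\{\mathcal U_n\}$ equipped with its own associated ultrametric $u_{\mathcal A}$ satisfies both conditions in Definition~\ref{tameDS}. For the mesh condition, observe that by the very formula \eqref{polvo} defining $u_{\mathcal A}$, any two points $\x,\y$ lying in the same element $O\in\mathcal U_n$ satisfy $\mathcal U_n[\x]=\mathcal U_n[\y]=O$, hence the infimum $j$ in \eqref{polvo} is at least $n+1$ (more precisely, the first index at which their carriers differ is strictly greater than $n$), so $u_{\mathcal A}(\x,\y)\le \frac{1}{n+2}$. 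Therefore $S_n\le\frac{1}{n+2}\to 0$. For the separation condition, if $O_1\neq O_2$ are distinct elements of $\mathcal U_n$ and $\x_i\in O_i$, then their carriers already differ at level $n$, so the infimum $j$ in \eqref{polvo} is at most $n$, giving $u_{\mathcal A}(\x_1,\x_2)\ge\frac{1}{n+1}$; thus $\mathcal U_n$ is $\rho_n$-separated with $\rho_n=\frac{1}{n+1}$.

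For the second assertion, I would argue both implications. Suppose first that $d$ is uniformly equivalent to $u_{\mathcal A}$; then since $\mathcal A$ equipped with $u_{\mathcal A}$ is tame by the first part, and tameness is preserved under passing to a uniformly equivalent metric (as noted in the paragraph preceding the proposition — the mesh $S_n\to 0$ is preserved because uniform continuity of the identity $(X,u_{\mathcal A})\to(X,d)$ forces $d$-diameters of sets of small $u_{\mathcal A}$-diameter to be small, and $\rho_n$-separation is preserved because uniform continuity of the identity $(X,d)\to(X,u_{\mathcal A})$ pulls the positive gap back to a positive $d$-gap), it follows that $\mathcal A$ equipped with $d$ is tame. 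Conversely, suppose $\mathcal A$ equipped with $d$ is tame. I want to show the identity maps between $(X,d)$ and $(X,u_{\mathcal A})$ are uniformly continuous in both directions. For $(X,u_{\mathcal A})\to(X,d)$: given $\varepsilon>0$, pick $n$ with $S_n<\varepsilon$; if $u_{\mathcal A}(\x,\y)<\frac{1}{n+1}$ then $\x,\y$ share the same carrier in $\mathcal U_n$ (by the description of $u_{\mathcal A}$-balls in the proof of Proposition~\ref{toalha}), so $d(\x,\y)\le S_n<\varepsilon$. For $(X,d)\to(X,u_{\mathcal A})$: given $\varepsilon>0$, choose $n$ with $\frac{1}{n+1}<\varepsilon$ and let $\rho_n>0$ witness $\rho_n$-separation of $\mathcal U_n$; if $d(\x,\y)<\rho_n$ then $\x$ and $\y$ cannot lie in distinct elements of $\mathcal U_n$, so $\mathcal U_n[\x]=\mathcal U_n[\y]$ and hence $u_{\mathcal A}(\x,\y)\le\frac{1}{n+2}<\varepsilon$. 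This gives uniform equivalence.

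The only mildly delicate point is bookkeeping the off-by-one shifts in the index $j$ of \eqref{polvo} (whether ``carriers agree up to level $n$'' corresponds to $u_{\mathcal A}$ being $\le\frac{1}{n+1}$ or $\le\frac{1}{n+2}$), and matching these up with the already-established identity $\{(\x,\y):u_{\mathcal A}(\x,\y)<\varepsilon\}=E_{\mathcal U_{n-1}}$ from the proof of Proposition~\ref{toalha}; I expect this is the main source of potential error, but it is purely routine. Everything else is a direct unwinding of definitions, and no step presents a genuine obstacle.
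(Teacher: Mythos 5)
Your proof is correct: both the tameness of $\mathcal A$ under $u_{\mathcal A}$ (with $S_n\le \frac{1}{n+2}$ and $\rho_n=\frac{1}{n+1}$) and the two implications of the equivalence are verified, and the off-by-one bookkeeping in the index $j$ of Equation~(\ref{polvo}) is handled consistently. The paper states Proposition~\ref{berries} without proof, treating it as a routine consequence of the definitions and of the remark that uniform equivalence preserves tameness, and your argument is exactly the direct unwinding the authors had in mind.
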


\begin{Corollary}\label{platano}
A defining sequence $\mathcal A$ is tame with respect to admissible metrics $d$ and $d_1$ if, and only if, $d$ and $d_1$ are uniformly equivalent. 
\end{Corollary}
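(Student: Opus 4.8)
The plan is to deduce Corollary~\ref{platano} directly from Proposition~\ref{berries} by using the associated ultrametric $u_\mathcal{A}$ as an intermediary, together with the elementary fact that uniform equivalence of metrics is an equivalence relation (transitive and symmetric). Note that $u_\mathcal{A}$ depends only on the defining sequence $\mathcal{A}$, not on any choice of admissible metric, so it is a fixed reference point once $\mathcal{A}$ is given.

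\begin{proof}
By Proposition~\ref{berries}, the defining sequence $\mathcal A$ is tame with respect to an admissible metric $d$ if, and only if, $d$ is uniformly equivalent to $u_\mathcal A$; likewise, $\mathcal A$ is tame with respect to $d_1$ if, and only if, $d_1$ is uniformly equivalent to $u_\mathcal A$. Now suppose $\mathcal A$ is tame with respect to both $d$ and $d_1$. Then $d$ is uniformly equivalent to $u_\mathcal A$ and $u_\mathcal A$ is uniformly equivalent to $d_1$; since uniform equivalence is transitive (the composition of uniformly continuous maps is uniformly continuous, applied to the identity maps $(X,d)\to (X,u_\mathcal A)\to (X,d_1)$ and their inverses), it follows that $d$ is uniformly equivalent to $d_1$. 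Conversely, suppose $d$ and $d_1$ are uniformly equivalent and that $\mathcal A$ is tame with respect to $d$. Then $d$ is uniformly equivalent to $u_\mathcal A$ by Proposition~\ref{berries}, and since $d_1$ is uniformly equivalent to $d$, transitivity again gives that $d_1$ is uniformly equivalent to $u_\mathcal A$; applying Proposition~\ref{berries} once more yields that $\mathcal A$ is tame with respect to $d_1$. By symmetry, the same argument interchanging the roles of $d$ and $d_1$ completes the proof.
\end{proof}

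There is essentially no obstacle here: the corollary is a formal consequence of Proposition~\ref{berries} and the transitivity and symmetry of uniform equivalence. The only point requiring a moment's care is to observe that $u_\mathcal A$ is intrinsic to $\mathcal A$, so that it serves as a common reference metric for both $d$ and $d_1$; this is immediate from the definition in Equation~(\ref{polvo}), which makes no reference to any admissible metric on $X$.
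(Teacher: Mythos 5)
Your proof is correct and is essentially the argument the paper intends: the corollary is presented as an immediate consequence of Proposition~\ref{berries}, with $u_\mathcal{A}$ serving as the common reference metric and the conclusion following from transitivity and symmetry of uniform equivalence, exactly as you wrote. Your careful reading of the converse direction (assuming tameness with respect to one of the metrics and transferring it to the other, rather than deducing tameness from uniform equivalence alone) is the intended interpretation, consistent with the paper's preceding remark that replacing a metric by a uniformly equivalent one preserves tameness.
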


Ultrametric spaces have tame defining sequences that are easy to describe, as we see below. 

\begin{Proposition}\label{cabra}
Let $(X,d)$ be an ultrametric space and let $\mathcal{U}_n = \{B(\x,\frac{1}{n}):\x\in X\}$, for $ n\in \N_+$, and $\mathcal{U}_0=\{X\}$. Then the sequence $\mathcal A=\{\mathcal{U}_n\}_{n\in \N}$ is a tame defining sequence of $X$, which is complete if, and only if, the ultrametric $u_\mathcal A$ is complete. 
\end{Proposition}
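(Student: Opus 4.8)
The statement asserts three things about the sequence $\mathcal A=\{\mathcal U_n\}$ defined by balls of radius $\tfrac1n$: that it is a defining sequence, that it is tame with respect to $d$, and that its completeness is equivalent to completeness of $u_{\mathcal A}$. The plan is to verify these in that order, relying on properties \ref{patinete}--\ref{hoverboard} of ultrametric spaces and on the earlier Propositions~\ref{toalha} and \ref{berries}.

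First I would check that each $\mathcal U_n$ really is a partition. By \ref{patinete} each ball $B(\x,\tfrac1n)$ is clopen and nonempty, they cover $X$, and by \ref{bike} two such balls are either disjoint or nested; but two balls of the \emph{same} radius that are nested must be equal (by \ref{triciclo}, if $B(\x,\tfrac1n)\subseteq B(\y,\tfrac1n)$ then picking any point in the smaller ball shows the two balls coincide). Hence distinct elements of $\mathcal U_n$ are disjoint, so $\mathcal U_n$ is a partition. For the refinement condition~\ref{refin}: if $\x\in X$ then $B(\x,\tfrac1{n+1})\subseteq B(\x,\tfrac1n)$, so $\mathcal U_{n+1}[\x]\subseteq\mathcal U_n[\x]$, giving that $\mathcal U_{n+1}$ refines $\mathcal U_n$. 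For the basis condition~\ref{blob}: given $\x$ and $\varepsilon>0$, choosing $n$ with $\tfrac1n<\varepsilon$ gives $\mathcal U_n[\x]=B(\x,\tfrac1n)\subseteq B_\varepsilon(\x)$, and since these sets are open this shows $\{U:U\in\mathcal U_n \text{ for some }n\}$ is a basis. Thus $\mathcal A$ is a defining sequence.

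Next, tameness with respect to $d$. Here I would simply invoke Proposition~\ref{berries}: it suffices to show that $d$ is uniformly equivalent to $u_{\mathcal A}$. In fact I claim they are essentially the same metric up to the obvious rescaling. By construction, for $\tfrac1{n+1}<\varepsilon\le\tfrac1n$ one has, as in the proof of Proposition~\ref{toalha}, that $u_{\mathcal A}(\x,\y)<\varepsilon$ iff $\mathcal U_{n-1}[\x]=\mathcal U_{n-1}[\y]$, i.e.\ iff $d(\x,\y)<\tfrac1{n-1}$ (using \ref{triciclo} to see that $\y\in B(\x,\tfrac1{n-1})$ exactly when $B(\x,\tfrac1{n-1})=B(\y,\tfrac1{n-1})$). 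So small $u_{\mathcal A}$-distance forces small $d$-distance and conversely; writing this out gives that the identity maps between $(X,d)$ and $(X,u_{\mathcal A})$ are uniformly continuous. By Proposition~\ref{berries}, $\mathcal A$ equipped with $d$ is tame. (Alternatively one checks tameness directly: $S_n=\sup\{\operatorname{diam}O:O\in\mathcal U_n\}\le\tfrac2n\to0$, and distinct elements of $\mathcal U_n$ are balls of radius $\tfrac1n$, so by \ref{hoverboard} they are at distance $\ge\tfrac1n$ apart, so $\mathcal U_n$ is $\tfrac1n$-separated.)

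Finally, the completeness equivalence is immediate from Proposition~\ref{toalha}, which states precisely that $u_{\mathcal A}$ is complete if and only if the defining sequence $\mathcal A$ is complete; so this clause needs no separate argument. I do not expect any serious obstacle here: the only mildly delicate point is the observation that same-radius nested balls coincide and the bookkeeping translating ``same carrier in $\mathcal U_{n-1}$'' into ``$d$-distance $<\tfrac1{n-1}$'', both of which are routine consequences of the ultrametric axioms \ref{patinete}--\ref{hoverboard}. The main thing to get right is the indexing of radii versus the $\tfrac1{1+j}$ normalization in the definition \eqref{polvo} of $u_{\mathcal A}$, so that the uniform-equivalence estimate comes out with the correct thresholds.
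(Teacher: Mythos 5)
Your proposal is correct and takes essentially the same route as the paper: the paper checks the defining-sequence conditions, verifies tameness directly (diameters of the balls in $\mathcal{U}_n$ at most $\tfrac1n$, so $S_n\to 0$, and $\tfrac1n$-separation from Property~\ref{hoverboard}) — exactly your parenthetical alternative — and obtains the completeness equivalence from Proposition~\ref{toalha}. Your primary detour for tameness through Proposition~\ref{berries} and the uniform equivalence of $d$ and $u_{\mathcal A}$ is sound but not needed, since the direct separation/diameter check you already include is the shorter argument the paper uses.
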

\begin{proof}

It is clear that $\mathcal A=\{\mathcal{U}_n\}_{n\in \N}$ satisfies items \ref{refin} and \ref{blob} of Definition~\ref{nintendoDS}, so it is a defining sequence. To check that it is tame, we notice that the diameter of each ball in $\mathcal{U}_n$ is $\frac{1}{n}$ (so $S_n\rightarrow 0$) and that each $\mathcal{U}_n$ is $\frac{1}{n}$-separated by Property~\ref{hoverboard}. For the completeness statement, notice that by Corollary~\ref{platano} the metric $d$ is uniformly equivalent to the metric $u_\mathcal A$, and hence the result follows from Proposition~\ref{toalha}.
\end{proof}

\subsection{$P$-adics}
We finish this section by recalling the construction of the $p$-adic integers  and the $p$-adic rationals. The standard norms on these spaces generate  metrics which  are ultrametrics. As such, they admit tame defining sequences.  For the sake of concreteness, we also give explicit descriptions of these tame defining sequences. For general information on $p$-adics, we refer the reader to \cite{Robert}.

\begin{Definition}\label{defnpadics}
Let $p$ be a prime number and let $A= \{0, \ldots, p-1\}$, i.e., the field of integers modulo $p$. Formally, we define $\Z_p$ and $\Q_p$ as follows:

\[\Z_p := \left \{\sum_{i \in \Z } a_i p^i : a_i \in A , a_i =0\  \forall  \  i < 0 \right \} \]
\[\Q_p := \left \{\sum_{i \in \Z } a_i p^i : a_i \in A , \exists l \in \Z \text{ such that } a_i =0 \  \forall \   i < l \right\} .\]

Elements in $\Z_p$ and $\Q_p$ are summed pointwise modulo $p$ with a carryover. Multiplication of an element of $\Q_p$ with a scalar in $A$ is defined pointwise with a carryover. Using addition and multiplication by a scalar, multiplication in $\Z_p$ and $\Q_p$ is defined in a natural way. Equipped with these algebraic operations, we have that  $\Z_p$ is a ring and $\Q_p$ is a field.
\end{Definition}
We now recall $p$-valuation and the metric it induces.
\begin{Definition}\label{normpadic}
Let  $\x =\sum_{i \in \Z}a_i p^i \in \Q_p$. If $a_i = 0$ for all $i$, then $\|\x\|_p=0$. Otherwise, $\|\x\|_p = p^{-l}$, where $l$ is the least integer such that $a_l\neq0$.

Then, $\| \ \|_p$ induces natural metrics on $\Z_p$ and $\Q_p$ by 
$d(\x,\y) = \|\x - \y \|_p.$
It is easily verified that they are complete ultrametrics. Moreover, equipped with this metric, $\Z_p$ is a compact completion of $\Z$  homeomorphic to the Cantor space, whereas $\Q_p$ is a locally compact completion of $\Q$.
\end{Definition}
Next we give an explicit description of the tame defining sequences formed by the ultrametric on $\Z_p$ and $\Q_p$. 

We denote the set of all words on $A$ of length $n$ by $A^n$, that is, \[A^n=\{\sigma_0 \sigma_1 \ldots \sigma_{n-1}: \sigma_i \in A\},\] 
recalling $A^0=\{\varepsilon\}$ where $\varepsilon$ is the empty word.  For $\sigma \in A^n$, say $\sigma=\sigma_0 \ldots \sigma_{n-1}$, let \[ \Z_p(\sigma):= \left \{\sum_{j\in \Z} a_j p^j \in \Z_p : a_{j} = \sigma_{j} \text{ for } 0\leq j \leq n-1 \right \},\]
and let $ \Z_p(\varepsilon):=\Z_p$. Then, letting 
\begin{align*}
   \mathcal{U}_n&=\{\Z_p(\sigma):\sigma \in A^n\}\\
\mathcal{V}_n &= \{ \Z_p(\sigma) p^j: \sigma\in A^{n-j},\  \sigma_0 \neq 0, \  j\in \Z, \ j\leq n \},
\end{align*}
we have that $\{\mathcal{U}_n\}$ and $\{\mathcal{V}_n\}$ are tame defining sequences for $\Z_p$ and $\Q_p$, respectively. Indeed such is the case, as ${\mathcal U}_n$ and ${\mathcal V}_n$ consist of open balls of radius $p^{-n+1}$ in $\Z_p$ and $\Q_p$, respectively.
\section{Inverse Limits, Shadowing and 
Shifts of Finite Order }\label{ovelha}

We begin by introducing the basic terminology of inverse limits.

\subsection{Inverse Limits and Shadowing}
\begin{Definition}\label{defn:invlim}  A {\em inverse system} consists of a sequence  $(g_m,X_m)$ where, for each $m\in \N$, $X_m$ is a space and $g_m:X_{m+1}\to X_{m}$ is a continuous map. 
	The \emph{inverse limit of $(g_m,X_m)$} is the space
	\[\invlim{g_m}{X_m}:=\left\{\seq{\x_m}_{m\in\N}\in\prod X_m:  \ \x_m=g_m(\x_{m+1}) \ \forall m \in \N  \right \},\]
	with the subspace topology inherited from the product topology on $\prod X_m$.
	The maps $g_m$'s are called \emph{bonding maps}.
	
\end{Definition}

\begin{Remark}
Notice that the inverse limit $\invlim{g_m}{X_m}$ is a closed subspace of the product $\prod X_m$. If each space $X_m$ is a complete metric space, then the inverse limit $\invlim{g_m}{X_m}$ is a complete metric space. Furthermore, if one equips $\prod X_m$ with the metric given in Equation~(\ref{productmetric}), then the inverse limit of ultrametric spaces is an ultrametric space.
\end{Remark}

The following condition is essential to guarantee that the inverse limit of spaces with the shadowing property has the shadowing property.

\begin{Definition}
	The inverse  system $(g_m,X_m)$ satisfies the \emph{Mittag-Leffler Condition} if for every $N\in \N$ there is a $k> N$ such that for all $i\geq k$ the following holds \[g_N\circ \ldots \circ g_k(X_{k+1}) = g_N \circ \ldots g_i(X_{i+1}).\]
\end{Definition}

\begin{Remark}\label{ML} Suppose that  $(g_m,X_m)$ satisfies the Mittag-Leffler Condition. Let $Y_n=\bigcap_{k=1}^{\infty} g_{n} \circ \ldots \circ g_{n+k}(X_{n+k+1})$, $n\in \N$. Then, the inverse system $(g_m,Y_m)$ has surjective bonding maps and the same inverse limit as the original sequence $(g_m,X_m)$.
\end{Remark}


We next define inverse limit dynamical system which is a subsystem of the product dynamical system. Let $(X_m, d_m, f_m)$ be a sequence of dynamical systems. Recall that the product map $ \prod_m f_m : \prod X_m \rightarrow \prod X_m$ is defined as \[ \prod_m f_m \left((\x_m)\right) = \left(f(\x_m)\right).\]
The {\em product dynamical system} is defined as $(\prod X_m,d_\Pi, \prod_m f_m)$.

\begin{Definition}\label{paralisado}
	Let $\{(X_m,d_m, f_m)\}$ be a  sequence of dynamical systems. Assume that  $\{g_m\}$, $g_m:X_{m+1}\to X_{m}$, is a sequence  of continuous bonding maps that satisfies the following property:  
	\[f_m\circ g_m=g_m\circ f_{m+1}.\]
	The  \emph{inverse limit dynamical system of  $(g_m,(X_m,f_m))$}, or \emph{ the inverse limit} for short, is the dynamical system  $(\invlim{g_m}{X_m}, d_\Pi, (f_m)^*) $, where $(f_m)^*$ is the restriction of $\prod_m f_m$ to $\invlim{g_m}{X_m}$. 
\end{Definition}

Notice that $\prod_m f_m$ is continuous and $(\invlim{g_m}{X_m},d_\Pi, (f_m)^*) $ is a continuous dynamical system. Furthermore, if each $f_m$ is uniformly continuous then $(f_m)^*$ is also uniformly continuous. 

A straightforward, but useful, result that we will need in the sequel is the following.

 \begin{Lemma}\label{esteaqui}
 Under the hypothesis of Definition~\ref{paralisado}, suppose further that each $X_i$ is equipped with a metric $d_i$ bounded by 1. If $(\x_i)_{i\in\N}$ is a $\delta$-pseudo-orbit in $(\invlim{g_m}{X_m}, d_I, (f_m)^*) $
 then, for every fixed $m\in \N$, the sequence $(\x_{i,m})_{i\in \N}$ is a $(m+1) \delta$-pseudo-orbit  of $f_m$ in $X_m$.
 \end{Lemma}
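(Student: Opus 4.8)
The plan is to unwind the definitions and use the explicit form of the product metric $d_\Pi$ from Equation~(\ref{productmetric}). Recall that a point of $\invlim{g_m}{X_m}$ is a sequence $\x_i = (\x_{i,m})_{m\in\N}$ with $\x_{i,m}\in X_m$ and $\x_{i,m} = g_m(\x_{i,m+1})$ for all $m$. Since $(f_m)^*$ is the restriction of $\prod_m f_m$, we have $\big((f_m)^*(\x_i)\big)_m = f_m(\x_{i,m})$ for each coordinate $m$. The hypothesis that $(\x_i)_{i\in\N}$ is a $\delta$-pseudo-orbit says exactly that $d_\Pi\big((f_m)^*(\x_i), \x_{i+1}\big) < \delta$ for all $i$.

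The key step is to pass from this inequality for $d_\Pi$ to an inequality in a single coordinate $X_m$. By the definition of the product metric in Equation~(\ref{productmetric}), for any two points $x,y$ in the product we have $\frac{d_m(x_m,y_m)}{m+1} \le d_\Pi(x,y)$ for every fixed $m$, hence $d_m(x_m,y_m) \le (m+1)\, d_\Pi(x,y)$. Applying this with $x = (f_m)^*(\x_i)$ and $y = \x_{i+1}$, and using the coordinate description above, I get
\[
d_m\big(f_m(\x_{i,m}), \x_{i+1,m}\big) \le (m+1)\, d_\Pi\big((f_m)^*(\x_i), \x_{i+1}\big) < (m+1)\,\delta
\]
for every $i\in\N$. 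This is precisely the statement that $(\x_{i,m})_{i\in\N}$ is an $(m+1)\delta$-pseudo-orbit of $f_m$ in $X_m$, so the proof is complete.

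There is essentially no obstacle here; the only things to be careful about are bookkeeping details: that the metrics $d_m$ are bounded by $1$ (so that $d_\Pi$ is well-defined as a genuine metric, i.e.\ the supremum/maximum in Equation~(\ref{productmetric}) is finite), and that the indexing convention in Equation~(\ref{productmetric}) matches — the weight attached to the $m$-th coordinate is $\frac{1}{m+1}$, which is why the constant comes out as $m+1$ rather than $m$. Both are consistent with the setup stated in Definition~\ref{paralisado} and the surrounding text, so no further argument is needed.
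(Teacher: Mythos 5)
Your proposal is correct and follows essentially the same route as the paper: bound the $m$-th coordinate term $\frac{d_m(f_m(\x_{i,m}),\x_{i+1,m})}{m+1}$ by the maximum defining $d_\Pi$, identify that maximum with $d_\Pi\big((f_m)^*(\x_i),\x_{i+1}\big)<\delta$, and multiply through by $m+1$. No issues.
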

 \begin{proof}
 Just notice that
 \[ \frac{d_m \left( f_m(\x_{i,m}), \x_{i+1,m} \right)}{m+1}
 \leq \max_j\left\{ \frac{d_j \left( f_j(\x_{i,j}), \x_{i+1,j} \right)}{j+1}\right\} = 
 d_\Pi\left((f_l)^*\left((\x_i)\right), \x_{i+1}\right) < \delta,\] for all $i$. Hence $(\x_{i,m})_{i\in \N}$ is a $(m+1) \delta$-pseudo-orbit in $X_m$ for every $m\in \N$.
 
 \end{proof}

 We now prove the main result of this subsection.

\begin{Theorem}\label{caipirinha}
Let $\{(X_m, d_m, f_m)\}$ be a sequence of dynamical systems such that each $X_i$ has a metric $d_i$ bounded by 1. Let $\{g_m\}$, with $g_m:X_{m+1}\to X_m$, be a sequence of uniformly continuous bonding maps. Suppose that the inverse system $(g_m,X_m)$ satisfies the Mittag-Leffler Condition. 
\begin{enumerate}
    \item  If $\{(X_m, d_m, f_m)\}$ has the shadowing property for every $m$, then the inverse limit dynamical system $(\invlim{g_m}{X_m},d_\Pi, (f_m)^*) $ has the shadowing property.
    \item If $\{(X_m, d_m, f_m)\}$ has the finite shadowing property for every $m$, then $(\invlim{g_m}{X_m},d_\Pi, (f_m)^*) $ has the finite shadowing property.
\end{enumerate}
 \end{Theorem}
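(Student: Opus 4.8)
The proof is a diagonal/compactness-of-finite-stages argument combined with the Mittag-Leffler Condition to repair the failure of surjectivity of the bonding maps. By Remark~\ref{ML}, we may pass to the system $(g_m, Y_m)$ with $Y_n = \bigcap_k g_n \circ \cdots \circ g_{n+k}(X_{n+k+1})$, which has surjective bonding maps and the same inverse limit; the restrictions $f_m|_{Y_m}$ are well defined since $f_m \circ g_m = g_m \circ f_{m+1}$ forces $f_m(Y_m) \subseteq Y_m$, and each $(Y_m, d_m, f_m|_{Y_m})$ inherits the (finite) shadowing property from $(X_m, d_m, f_m)$ — this last inheritance claim is one place to be slightly careful, since passing to an invariant closed subset does not automatically preserve shadowing; the point is that the shadowing constant for $X_m$ works for $Y_m$ because a pseudo-orbit in $Y_m$ is a pseudo-orbit in $X_m$, but the shadowing point produced lives in $X_m$, not necessarily in $Y_m$. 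So instead I would keep the original $X_m$ and use Mittag-Leffler directly to adjust finitely many coordinates at a time.

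Here is the core construction, say for part (1). Given $\varepsilon > 0$, pick $M$ with $(M+1)^{-1} < \varepsilon/2$, so that controlling coordinates $0, 1, \dots, M$ to within $\varepsilon/2$ in each $d_m$ gives $d_\Pi$-closeness within $\varepsilon$ (together with the automatic bound $\le (M+1)^{-1}$ on the tail coordinates). For each $m \le M$, use the shadowing property of $(X_m, d_m, f_m)$ to get $\delta_m > 0$ witnessing $\varepsilon/2$-shadowing at level $m$, and set $\delta' = \min_{m \le M} \delta_m$. Now refine further using uniform continuity of the bonding maps and the Mittag-Leffler Condition: choose $K \ge M$ from the Mittag-Leffler Condition such that $g_M \circ \cdots \circ g_K(X_{K+1}) = g_M \circ \cdots \circ g_i(X_{i+1})$ for all $i \ge K$, and more generally do this simultaneously for all levels $m \le M$ (take the max of the finitely many $K$'s), and also shrink $\delta$ (below $\delta'$ divided by the product of moduli of continuity of the relevant compositions $g_m \circ \cdots \circ g_K$) so that a $\delta$-pseudo-orbit in the inverse limit projects, via Lemma~\ref{esteaqui}, to a $\delta_m$-pseudo-orbit of $f_m$ in $X_m$ for each $m \le M$. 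Given an infinite $\delta$-pseudo-orbit $(\x_i)_{i \in \N}$ in the inverse limit, for each level $m \le K+1$ the sequence $(\x_{i,m})_i$ is a $\delta_m$-pseudo-orbit of $f_m$ (Lemma~\ref{esteaqui} plus the choice of $\delta$), so there is a shadowing point $\z_m \in X_m$ with $d_m(f_m^i(\z_m), \x_{i,m}) < \varepsilon/2$ for all $i$. These $\z_m$ need not be compatible under the bonding maps, but they are \emph{almost} compatible: $g_m(\z_{m+1})$ and $\z_m$ shadow $g_m(\x_{i,m+1})$ and $\x_{i,m} = g_m(\x_{i,m+1})$ respectively, so their orbits stay close.

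To produce an honest point of the inverse limit, I would build a compatible sequence $(\w_m)_{m \in \N}$ by working downward from a high level using Mittag-Leffler: set $\w_m = \z_m$ for $m$ in an appropriate range and, for $m$ below $M$, replace $\w_m$ by $g_m \circ \cdots \circ g_{K}(\z_{K+1})$ — the Mittag-Leffler equality guarantees these projected values are consistent with what one would obtain from arbitrarily deep levels, and hence (letting the depth go to infinity and using that the inverse limit is closed in $\prod X_m$, i.e. a Cantor-intersection / König's-lemma style argument on the tree of compatible finite sequences) one extracts a genuine element $\w \in \invlim{g_m}{X_m}$ whose first $M+1$ coordinates agree with $g_m \circ \cdots \circ g_K(\z_{K+1})$; these still $\varepsilon/2$-shadow $(\x_{i,m})_i$ for $m \le M$ because they equal $f_m^i$ applied to the shadowing point pushed down (using $f_m \circ g_m = g_m \circ f_{m+1}$), and $g_m \circ \cdots \circ g_K$ is distance-nonincreasing up to the chosen modulus on the relevant scale. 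Then $\w$ $\varepsilon$-shadows $(\x_i)_i$: coordinates $\le M$ are within $\varepsilon/2$ and coordinates $> M$ contribute at most $(M+1)^{-1} < \varepsilon/2$ to $d_\Pi$. Part (2) is identical, replacing ``infinite pseudo-orbit'' by ``finite pseudo-orbit'' throughout — no accumulation-point argument is needed there, so it is actually slightly easier.

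\textbf{Main obstacle.} The delicate point is the passage from the incompatible family $\{\z_m\}$ of shadowing points to a single point of the inverse limit. Naively one wants surjective bonding maps so that a lift always exists; Mittag-Leffler is exactly the hypothesis that lets one ``stabilize'' — any value in $g_M \circ \cdots \circ g_K(X_{K+1})$ is realizable by arbitrarily deep compatible sequences, so the tree of finite compatible sequences that shadow on coordinates $\le M$ is infinite and finitely branching enough to invoke a König/Cantor-intersection argument. Getting the quantifiers right — choosing $K$ large enough simultaneously for all levels $m \le M$, and $\delta$ small enough that Lemma~\ref{esteaqui}'s factor-$(m+1)$ loss plus the moduli of continuity of the $g_m \circ \cdots \circ g_K$ still land inside each $\delta_m$ — is the bookkeeping heart of the proof, but conceptually everything reduces to: shadow at each fixed level, then glue using Mittag-Leffler.
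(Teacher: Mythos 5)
Your plan has the same architecture as the paper's proof: project the pseudo-orbit to a single sufficiently deep coordinate via Lemma~\ref{esteaqui}, shadow there, push the shadowing point down through the bonding maps using $f_m\circ g_m=g_m\circ f_{m+1}$ together with uniform continuity of the finitely many compositions $g_m\circ\cdots\circ g_K$, bound every coordinate beyond the chosen level by $(M+1)^{-1}$ (the metrics are bounded by $1$), and use the Mittag-Leffler Condition to get an honest point of the inverse limit. The one genuine difference is how Mittag-Leffler enters: the paper first replaces the system by one with surjective bonding maps (Remark~\ref{ML}) and shadows at a single level of that system, whereas you keep the original $X_m$, shadow at level $K+1$ with $K$ a stabilization index for level $M$, and then descend. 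Your opening worry (that shadowing need not pass to the invariant subspaces $Y_m$) is legitimate, and your workaround, shadowing in $X_{K+1}$ itself and only afterwards pushing down into the stable image, is a reasonable refinement of the same argument.

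Two points in your write-up need repair. First, the extraction of the inverse-limit point: a K\"onig's-lemma or Cantor-intersection argument on the tree of finite compatible sequences does not work here, since nothing makes that tree finitely branching (the relevant spaces in this paper are shifts over countably infinite alphabets) and there is no compactness; an infinite, infinitely branching tree with arbitrarily long branches need not have an infinite branch. The correct and immediate justification is Remark~\ref{ML} itself: by the choice of $K$, the decreasing images $g_M\circ\cdots\circ g_j(X_{j+1})$ stabilize, so $g_M\circ\cdots\circ g_K(\z_{K+1})$ lies in $Y_M=\bigcap_j g_M\circ\cdots\circ g_j(X_{j+1})$, the restricted bonding maps $g_n\colon Y_{n+1}\to Y_n$ are surjective, and $\invlim{g_m}{Y_m}=\invlim{g_m}{X_m}$; hence the pushed-down point lifts coordinate by coordinate, with the coordinates below $M$ forced. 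Second, the accuracy bookkeeping is tangled: Lemma~\ref{esteaqui} alone turns a $\delta$-pseudo-orbit of the inverse limit into a $(K+2)\delta$-pseudo-orbit at level $K+1$ (no moduli of continuity of the $g$'s are involved in that step), and the uniform continuity of $g_m\circ\cdots\circ g_K$ must instead be applied to the shadowing accuracy: you need $\z_{K+1}$ to shadow $(\x_{i,K+1})_i$ within some $\varepsilon'$ chosen so that $\varepsilon'$-closeness at level $K+1$ becomes $\varepsilon/2$-closeness at each level $m\le M$ after applying $g_m\circ\cdots\circ g_K$ (this is exactly the paper's $\varepsilon'$); shadowing within $\varepsilon/2$ at level $K+1$, as in the middle of your construction, is not by itself enough, although your closing sentence about the ``chosen modulus'' indicates the right intent. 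With these two corrections your proposal coincides, in substance, with the paper's proof.
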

 \begin{proof}
We prove only (i) as the proof of (ii) is analogous. In light of Remark~\ref{ML}, it suffices to show the result when $g_m$ is surjective for all $m$.

Given $\varepsilon>0$, choose $k\in \N$ such that $\frac{1}{k} < \varepsilon$. 

Let $\varepsilon'<\varepsilon$ be such that if two points in $X_k$ are $\varepsilon'$ close (that is, their distance is less than $\varepsilon'$) then their images under $g_i \circ \ldots  \circ g_{k-1}$, for $0\leq i \leq k-1$, is $\varepsilon$ close. Such $\varepsilon'$ exists from the uniform continuity of $g_i$'s.

Let $\delta_k>0$ be such that every $\delta_k$-pseudo-orbit in $X_k$ is $\varepsilon'$-shadowed. Let $\delta = \frac{\delta_k}{k+1}$. We will show that every $\delta$-pseudo-orbit in $(\invlim{g_m}{X_m},d_\Pi, (f_m)^*) $ is $\varepsilon$-shadowed.

Let $(\x_i)$ be a $\delta$-pseudo-orbit in $(\invlim{g_m}{X_m}, d_\Pi,(f_m)^*) $. By Lemma~\ref{esteaqui} we have that $(\x_{i,k})_{i=1}^\infty$ is a $(k+1) \delta$-pseudo-orbit in $X_k$. Hence, it is a $\delta_k$-pseudo-orbit in $X_k$. By our choice of $\delta_k$, we  may choose $\mathbf{t}_k$ in $X_k$ that $\varepsilon'$-shadows $(\x_{i,k})_{i=1}^\infty$, i.e., $d_k(f_k^i(\mathbf{t}_k),\x_{i,k})<\varepsilon'$ for all $i$. 
As $g_m$'s are surjective, we may choose $\mathbf{t}\in (\invlim{g_m}{X_m},(f_m)^*) $ such that the projection of $\mathbf{t}$ onto the $k^{th}$-coordinate is $\mathbf{t}_k$.

We now show that $\mathbf{t}$ is a point that $\varepsilon$-shadows $(\x_i)$. Indeed,

\begin{align*} d_\Pi\left( \left((f_j)^*\right)^i(\mathbf{t}), \x_i \right) & = \max_j\left\{\frac{d_j ( f_j^i(\mathbf{t}_j),\x_{i,j})}{j+1}\right\}\\
 & = \max\left\{\max_{j=0,\ldots,k-1}\left\{\frac{d_j ( f_j^i(\mathbf{t}_j),\x_{i,j})}{j+1}\right\}, \max_{j\geq k}\left\{\frac{d_j ( f_j^i(\mathbf{t}_j),\x_{i,j})}{j+1}\right\}\right\}
 \\
  & \leq \max\left\{\max_{j=0,\ldots,k-1}\left\{\frac{d_j ( f_j^i(\mathbf{t}_j),\x_{i,j})}{j+1}\right\},\frac{1}{k+1}\right\}
 \\
 & = \max\left\{\max_{j=0,\ldots,k-1}\left\{\frac{d_j (f_j^i(g_j \circ \ldots \circ g_{k-1}(\mathbf{t}_k)),\x_{i,j})}{j+1}\right\},\frac{1}{k+1}\right\}
 \\
 & = \max\left\{\max_{j=0,\ldots,k-1}\left\{\frac{d_j ( g_j \circ \ldots \circ g_{k-1}(f_k^i(\mathbf{t}_k)),\x_{i,j})}{j+1}\right\},\frac{1}{k+1}\right\}
 \\
 & = \max\left\{\max_{j=0,\ldots,k-1}\left\{\frac{d_j ( g_j \circ \ldots \circ g_{k-1}(f_k^i(\mathbf{t}_k)),g_j\ldots g_{k-1}(\x_{i,k}))}{j+1}\right\},\frac{1}{k+1}\right\}
 \\
 & \leq \varepsilon.
\end{align*}
where the last line follows from the previous one by our choice of $\mathbf{t}_k$ and the uniform continuity of $g_j \circ \ldots \circ g_{k-1}$.
This concludes the proof of the shadowing property of  $(\invlim{g_m}{X_m},d_\Pi, (f_m)^*) $ as desired.
\end{proof}

\subsection{Shadowing and Shifts of Finite Order}\label{Shadowing and Shifts of Finite Order}
Our aim now will be to prove a converse of the above result. More precisely we will show that a dynamical system $(X,d,f)$ with a complete tame defining sequence and the finite shadowing property is conjugate to an inverse limit of 1-step shift spaces over countable alphabets, and this conjugacy can be made uniform when the map $f$ is uniformly continuous. In this section we will also characterize the relation between the finite shadowing and shadowing property.

Let $\m{U}{}$ be a partition of $X$.
Then, we define
\[\m{O}{}(\m{U}{},f) = \left  \{(O_i)_{i\in\N} \in \m{U}{}^{\N}: \forall \ k \in \N, \ \exists \ \x \in X \textit{ s.t. } f^i(\x) \in O_i, \  0 \le i  \le k  \right \}, \]
and 
\[\m{PO}{}(\m{U}{},f) = \left  \{(O_i)_{i\in\N} \in \m{U}{}^{\N}: \forall \ i \in \N,  f(O_i)\cap O_{i+1}\neq\emptyset\right \}. \]
When we are working with a single function $f$, we will often suppress $f$ and write $\m{O}{}(\m{U}{})$
and $\m{PO}{}(\m{U}{})$, instead of $\m{O}{}(\m{U}{},f)$
and $\m{PO}{}(\m{U}{},f)$, respectively.

We endow $\m{U}{}$ with the discrete metric (so the distance between any two distinct points is one) and $\m{U}{}^{\N}$ with the product topology, which is generated by the metric given in Equation~(\ref{productmetric}).

We now prove a sequence of auxiliary results which lead to the proof of the main theorem.

\begin{Lemma}\label{basicdeflem}
Let $\m{U}{}$ be a  partition of $X$ and $f:X\rightarrow X$ be a continuous function. Then, the following hold.
\begin{enumerate}
    \item $\m{O}{}(\m{U}{}) \subseteq \m{PO}{}(\m{U}{})$
    \item  $\m{O}{}(\m{U}{})$ is a subshift of $\m{U}{}^\N$;
    \item $\m{PO}{}(\m{U}{})$ is a 1-step subshift of $\m{U}{}^\N$.
\end{enumerate}
\end{Lemma}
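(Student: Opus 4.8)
The three assertions are routine once the definitions are unwound, so the plan is simply to verify each in turn, being careful about which properties of $\m{U}{}$, $f$, and the product metric are actually used.

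\emph{Item (i).} First I would show $\m{O}{}(\m{U}{})\subseteq\m{PO}{}(\m{U}{})$. Let $(O_i)_{i\in\N}\in\m{O}{}(\m{U}{})$ and fix $i\in\N$. Applying the defining condition of $\m{O}{}(\m{U}{})$ with $k=i+1$ produces a point $\x\in X$ with $f^j(\x)\in O_j$ for $0\le j\le i+1$; in particular $f^i(\x)\in O_i$ and $f^{i+1}(\x)=f(f^i(\x))\in O_{i+1}$, so $f(O_i)\cap O_{i+1}\ni f^{i+1}(\x)$ is nonempty. Since $i$ was arbitrary, $(O_i)_{i\in\N}\in\m{PO}{}(\m{U}{})$.

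\emph{Item (ii).} To see that $\m{O}{}(\m{U}{})$ is a subshift of $\m{U}{}^\N$ I must check shift-invariance and closedness. Shift-invariance: if $(O_i)_{i\in\N}\in\m{O}{}(\m{U}{})$ and $k\in\N$, pick $\x$ witnessing the condition for the index $k+1$, i.e. $f^i(\x)\in O_i$ for $0\le i\le k+1$; then $\y:=f(\x)$ satisfies $f^i(\y)=f^{i+1}(\x)\in O_{i+1}$ for $0\le i\le k$, so $\y$ witnesses that $\sigma((O_i)_{i\in\N})=(O_{i+1})_{i\in\N}$ lies in $\m{O}{}(\m{U}{})$. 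Closedness: suppose $(O^{(n)}_i)_{i\in\N}\to (O_i)_{i\in\N}$ in $\m{U}{}^\N$. Fix $k\in\N$; by the form of the product metric (convergence in $\m{U}{}^\N$ is coordinatewise, and each coordinate is discrete) there is $n$ with $O^{(n)}_i=O_i$ for $0\le i\le k$. Choosing $\x$ that witnesses membership of $(O^{(n)}_i)_{i\in\N}$ for this index $k$ gives $f^i(\x)\in O^{(n)}_i=O_i$ for $0\le i\le k$. Since $k$ was arbitrary, $(O_i)_{i\in\N}\in\m{O}{}(\m{U}{})$. Hence $\m{O}{}(\m{U}{})$ is closed and $\sigma$-invariant, i.e. a subshift.

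\emph{Item (iii).} For $\m{PO}{}(\m{U}{})$, shift-invariance is immediate from the definition, since the condition ``$f(O_i)\cap O_{i+1}\neq\emptyset$ for all $i$'' is preserved under deleting the zeroth coordinate. For closedness and for being a $1$-step shift at the same time, I would exhibit it as $X_F$ for a set $F$ of forbidden words of length $2$: let
\[
F=\bigl\{\,(O,O')\in\m{U}{}\times\m{U}{}:\ f(O)\cap O'=\emptyset\,\bigr\}.
\]
Then $(O_i)_{i\in\N}\in\m{PO}{}(\m{U}{})$ if and only if the word $(O_i,O_{i+1})$ is not in $F$ for any $i$, i.e. $\m{PO}{}(\m{U}{})=X_F$. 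Since every $X_F$ is a closed shift-invariant subset of $\m{U}{}^\N$ (closedness because the complement is a union of cylinder sets, each open), $\m{PO}{}(\m{U}{})$ is a subshift, and because $F$ consists only of words of length $2$ it is a shift of order $2$, that is, a $1$-step shift, as claimed.

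\emph{Main obstacle.} There is essentially no obstacle here: the only point requiring a little care is that in item (ii) the witness $\x$ depends on $k$ (the definition of $\m{O}{}(\m{U}{})$ only asks for a witness of each finite length, not one uniform $\x$), so one must not accidentally try to produce a single orbit — which is exactly the reason $\m{O}{}(\m{U}{})$ need not equal $\{(O_i):\exists\,\x,\ f^i(\x)\in O_i\ \forall i\}$ and why the inverse-limit machinery is needed later. Continuity of $f$ is not even used for the statements as phrased, beyond ensuring $f$ is a well-defined map of $X$.
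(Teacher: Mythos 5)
Your proof is correct and follows essentially the same route as the paper's: item (i) by unwinding definitions, item (ii) by checking shift-invariance plus a coordinatewise-convergence closedness argument with a witness chosen for each finite length $k$, and item (iii) by exhibiting $\m{PO}{}(\m{U}{})$ as $X_F$ for the forbidden set $F=\{(O,O'):f(O)\cap O'=\emptyset\}$ of length-two words. Your added remarks (that the witness in (ii) depends on $k$ and that continuity of $f$ is not actually needed here) are accurate.
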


\begin{proof}\ 
\begin{enumerate}
\item This follows directly from the definitions.

\item It is straightforward that $\m{O}{}(\m{U}{})$ is shift invariant. Thus, we just need to check that $\m{O}{}(\m{U}{})$ is closed in  $\m{U}{}^{\N}$.

Let $(O_\ell)_{\ell\in\N}$ be a sequence of points of $\m{O}{}(\m{U}{})$, that is,   $O_\ell=(O_{\ell, i})_{i\in\N}\in \m{O}{}(\m{U}{})$, for each $\ell\in\N$. Suppose that $(O_\ell)_{\ell\in\N}$ converges to some point $\bar O=(\bar O_i)_{i\in\N}\in\m{U}{}^{\N}$. Then, for all $k\in\N$, there exist $N\in\N$ such that for all $\ell\geq N$ we have $O_{\ell,i}=\bar O_i$ for all $0\leq i \leq k$. Thus, for any fixed $\ell\geq N$ we can take $\x\in X$ such that  $f^i(\x) \in O_{\ell,i}=\bar O_i$ for all  $0 \le i  \le k$, which means that $\bar O\in \m{O}{n}$.

\item From its definition, $\m{PO}{}(\m{U}{})$ is the 1-step shift whose set of forbidden words is given by $F=\{OQ: f(O)\cap Q=\emptyset \}$.  \end{enumerate}
\end{proof}
 
Let $\m{U}{}, \m{V}{}$ be partitions of $X$ with $\m{U}{}$ a refinement of $\m{V}{}$. We use $\m{U}{}^{\N} \hookrightarrow \m{V}{}^{\N}$ do denote the function which takes $(U_i)_{i \in \N} \in \m{U}{}^{\N}$ to the unique element $(V_i)_{i \in \N} \in \m{V}{}^{\N}$ such that $U_i \subseteq V_i$ for all $i$. Moreover, if $\m{A}{} \subseteq \m{U}{}^{\N}$ and $\m{B}{} \subseteq \m{V}{}^{\N}$, then
$\m{A}{} \hookrightarrow \m{B}{}$ denotes the same map restricted to $\m{A}{}$ with the implied assumption that the image of $\m{A}{}$ is contained in $\m{
B}{}$. Keeping in mind the metric given in  Equation~(\ref{productmetric}), we have that $\hookrightarrow$ is a uniformly continuous map from $\m{A}{}$ into $\m{B}{}$. In the specific case that $\m{A}{} = \{(U_i)_{i \in \N}\}$ and $\m{B}{} = \{(V_i)_{i \in \N}\}$, we abuse the notation and simply write $ (U_i)_{i \in \N} \hookrightarrow (V_i)_{i \in \N}$.

We now define a  sequence of maps which yields a conjugacy between the map $f$ and the inverse limit representing it.

Let $\m{U}{}$ be partition of $X$. Define
\[\pi_0 (\m{U}{}): \m{O}{}(\m{U}{})  \rightarrow \m{U}{} \ \ \  \textit{by} \ \ \  \pi_0(\m{U}{})((O_i))  = O_0,\]
\[\alpha (\m{U}{}): X \rightarrow \m{U}{} \ \  \textit{ by } \ \ \alpha(\m{U}{}) (\x) = \m{U}{} [\x], \text{ and }\]
\[\theta (\m{U}{}): X \rightarrow \m{O}{}(\m{U}{}) \ \ \  \textit{by} \ \ \  \theta(\m{U}{}) (\x) = (\m{U}{} [f^i(\x)]). \]

\begin{Proposition}\label{partialmaps}
\begin{enumerate} The following statements hold.
    \item $\pi_0 (\m{U}{})$ is $1$-Lipschitz.
    \item $\alpha (\m{U}{})$ is continuous.
    \item If $f$ is  continuous, then $\theta (\m{U}{})$ is continuous.
    \item If $f$ is  uniformly continuous, and $\m{U}{}$ is $\rho$-separated for some $\rho >0$, then $\theta (\m{U}{})$ is uniformly continuous.
    \item $\m{O}{}(\m{U}{})$ is the closure of $\theta (\m{U}{})(X)$.
\end{enumerate}
\end{Proposition}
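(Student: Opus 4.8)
The plan is to prove the five claims in order, since each is essentially a bookkeeping verification once one unwinds the definitions and uses the metric from Equation~(\ref{productmetric}). For (i), I would argue directly: if $(O_i),(O_i')\in\m{O}{}(\m{U}{})$ agree in their first $k$ coordinates then in particular $O_0=O_0'$, so $\pi_0(\m{U}{})$ sends points at distance $\le (k+1)^{-1}$ to points at distance $0$; hence $\pi_0(\m{U}{})$ is $1$-Lipschitz (in fact it contracts). For (ii), since $\m{U}{}$ is a partition into clopen sets and carries the discrete topology, each fiber $\alpha(\m{U}{})^{-1}(U)=U$ is open, so $\alpha(\m{U}{})$ is continuous; alternatively, $\alpha(\m{U}{})$ is locally constant.

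For (iii) and (iv), the map $\theta(\m{U}{})$ is the ``coding by $f$-orbit'' map, and the natural approach is to note that $\theta(\m{U}{})=(\alpha(\m{U}{})\circ f^i)_{i\in\N}$ read coordinatewise into $\m{U}{}^{\N}$; continuity of each coordinate (using continuity of $f^i$ and part (ii)) gives continuity of $\theta(\m{U}{})$ into the product, and one checks the image lands in $\m{O}{}(\m{U}{})$ directly from the definition of $\m{O}{}(\m{U}{})$ (taking $\x$ itself as the witness for each $k$). For (iv), given $\varepsilon>0$ pick $k$ with $(k+1)^{-1}<\varepsilon$; using that $\m{U}{}$ is $\rho$-separated, any two points whose images under $f^i$ lie within $\rho$ for $i=0,\dots,k$ have the same carrier $\m{U}{}[f^i(\cdot)]$ for those $i$, so $\theta(\m{U}{})$-images agree in the first $k+1$ coordinates. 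Then uniform continuity of $f$ (hence of $f^0,\dots,f^k$, a finite family) yields a single $\delta>0$ so that $d(\x,\y)<\delta$ forces $d(f^i(\x),f^i(\y))<\rho$ for all $i\le k$; this $\delta$ witnesses uniform continuity of $\theta(\m{U}{})$.

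For (v), I would show $\m{O}{}(\m{U}{})\supseteq\overline{\theta(\m{U}{})(X)}$ using part (ii) of Lemma~\ref{basicdeflem} (that $\m{O}{}(\m{U}{})$ is closed) together with the fact that $\theta(\m{U}{})(X)\subseteq\m{O}{}(\m{U}{})$ already noted in (iii). For the reverse inclusion, let $(O_i)_{i\in\N}\in\m{O}{}(\m{U}{})$; for each $k$ choose $\x^{(k)}\in X$ with $f^i(\x^{(k)})\in O_i$ for $0\le i\le k$, so that $\theta(\m{U}{})(\x^{(k)})$ agrees with $(O_i)$ in the first $k+1$ coordinates, i.e.\ $d_\Pi(\theta(\m{U}{})(\x^{(k)}),(O_i))\le (k+2)^{-1}\to 0$. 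Hence $(O_i)\in\overline{\theta(\m{U}{})(X)}$.

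\textbf{Main obstacle.} None of the steps is genuinely hard; the only point requiring care is (iv), where one must make sure the $\rho$-separation of $\m{U}{}$ is used correctly — namely that $d(f^i(\x),f^i(\y))<\rho$ forces $f^i(\x)$ and $f^i(\y)$ to lie in the \emph{same} element of $\m{U}{}$ (this is exactly the contrapositive of the separation hypothesis, since distinct elements are $\rho$-apart) — and that only finitely many iterates $f^0,\dots,f^k$ are involved, so uniform continuity of $f$ suffices to produce one uniform $\delta$. Everything else is a direct translation of definitions through the explicit product metric $d_\Pi(\x,\y)=(j+1)^{-1}$ with $j$ the first disagreeing coordinate.
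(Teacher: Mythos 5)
Your proposal is correct and follows essentially the same route as the paper: (i)–(iii) by unwinding the definitions and the product metric, (iv) by choosing $k$ with $(k+1)^{-1}<\varepsilon$ and one $\delta$ working simultaneously for the finitely many uniformly continuous iterates $f^0,\dots,f^k$ against the separation constant $\rho$, and (v) by the coordinate-agreement approximation argument (the paper leaves the inclusion $\theta(\m{U}{})(X)\subseteq\m{O}{}(\m{U}{})$ and closedness implicit, which you spell out via Lemma~\ref{basicdeflem}). No gaps.
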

\begin{proof}
$(i)$ follows from the definitions of $\pi_0 (\m{U}{})$ and $d_\Pi$. $(ii)$ follows from the fact that $\m{U}{}$ is a partition.
$(iii)$ follows from the definitions of  continuity, $\theta (\m{U}{})$ and $d_\Pi$. 

To see $(iv)$, let $\varepsilon >0$. Let $j \in \N$ be such that $1/(j+1) <\varepsilon$. As each $f^i$, $0 \le i \le j$, is uniformly continuous, let $\delta >0$ satisfy, simultaneously, the uniform continuity of each $f^i$, $0 \le i \le j$, with respect to $\rho$. Now, we have that for any $\x_1, \x_2$ in $X$ with distance less than $\delta$, the first $j$ coordinates of 
$\theta (\m{U}{})(\x_1)$ and $\theta (\m{U}{})(\x_2)$ are equal, implying that their distance is less than $\varepsilon$.

Finally, to see $(v)$, let $(O_i) \in \m{O}{}(\m{U}{})$. By the definition of $\m{O}{}(\m{U}{})$, we have that for each $k \in \N$, there exists $\x \in X$ such that $f^i(\x) \in O_i$, $0 \le i \le k$. Hence, the first $k$ coordinates of $\theta (\m{U}{})(\x)$ agree with first $k$ coordinates of $(O_i)$, implying that $(O_i)$ is in the closure of   $\theta (\m{U}{})(X)$. \end{proof}

We next define maps $\alpha,  \theta$,  and $\pi$ as product maps.

Let $\mathcal{A} =\{\m{U}{n}\}_{n \in \N}$ be a defining sequence of $X$ and $(X,d, f)$ be a dynamical system. Define

\[ \alpha = \prod_{n} \alpha (\m{U}{n}) :X \rightarrow \invlim{\hookrightarrow}{\m{U}{n}},
\]
\[ \theta = \prod_{n} \theta (\m{U}{n}) :X \rightarrow \invlim{\hookrightarrow}{\m{O}{}(\m{U}{n})}, \text{ and}
\]
\[ \pi = \prod_{n} \pi_0 (\m{U}{n}) :\invlim{\hookrightarrow}{\m{O}{}(\m{U}{n})} \rightarrow \invlim{\hookrightarrow}{\m{U}{n}}.
\]
As product maps, each of $\alpha$, $\theta$ and $\pi$ is continuous. The following proposition states this and more.
\begin{Proposition}\label{fullmaps}
The following are true.
\begin{enumerate}
    \item The map $\alpha$ is an isometry from $(X,u_{\alpha})$ to $(\invlim{\hookrightarrow}{\m{U}{n}}, d_{\Pi})$. Moreover, $\alpha$ is onto if $u_{\alpha}$ is complete.
    \item The map $\theta$ is continuous. If $f$ is uniformly continuous and $\mathcal{A}$ is tame, then $\theta$ is uniformly continuous from $(X,d)$ to $(\invlim{\hookrightarrow}{\m{O}{}(\m{U}{n})}, d_{\Pi})$.
    \item The map $\pi$ is 1-Lipschitz from $(\invlim{\hookrightarrow}{\m{O}{}(\m{U}{n})}, d_{\Pi})$ to $(\invlim{\hookrightarrow}{\m{U}{n}}, d_{\Pi})$.
    \end{enumerate}
    \begin{proof}
    $(i)$ simply follows from the definitions of metrics $u_{\alpha}$ and $d_{\Pi}$.
    
    For $(ii)$, that $\theta$ is continuous follows from the fact that it is a product of continuous maps. The second part follows from Proposition~\ref{partialmaps} $(iv)$ and the fact that the product of uniformly continuous maps, as remarked after Equation~(\ref{productmetric}), is uniformly continuous.
    
    $(iii)$ follows from the fact that the product of 1-Lipschitz map is 1-Lipschitz.
    \end{proof}
\end{Proposition}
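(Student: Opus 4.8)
The plan is to deduce each of the three assertions from its single-partition counterpart in Proposition~\ref{partialmaps}, combined with the behaviour of product and tupled maps under the product metric $d_\Pi$; part (i) will in addition rest on one bare metric computation. Throughout I write $u_\alpha$ for the ultrametric $u_{\mathcal A}$ of Equation~(\ref{polvo}) associated with the defining sequence $\mathcal A=\{\m{U}{n}\}$.

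For (i), I would first verify that $\alpha(\x)=(\m{U}{n}[\x])_n$ is a genuine thread of $\invlim{\hookrightarrow}{\m{U}{n}}$: since $\m{U}{n+1}$ refines $\m{U}{n}$ we have $\m{U}{n+1}[\x]\subseteq\m{U}{n}[\x]$, which is precisely the bonding relation. The isometry is then a direct computation. As each $\m{U}{n}$ carries the discrete metric, the $n$-th term of $d_\Pi(\alpha(\x),\alpha(\y))$ equals $\frac{1}{n+1}$ when $\m{U}{n}[\x]\neq\m{U}{n}[\y]$ and $0$ otherwise. The refinement property forces the set of indices $n$ with $\m{U}{n}[\x]\neq\m{U}{n}[\y]$ to be upward closed: if $\x,\y$ shared a cell of $\m{U}{n+1}$, that cell would lie in a single cell of $\m{U}{n}$, so they would share a cell of $\m{U}{n}$. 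Hence this index set has the form $\{j,j+1,\dots\}$ with $j$ as in Equation~(\ref{polvo}), the defining maximum is attained at $n=j$, and $d_\Pi(\alpha(\x),\alpha(\y))=\frac{1}{j+1}=u_\alpha(\x,\y)$. For surjectivity when $u_\alpha$ is complete, I would take an arbitrary thread $(U_n)$; it is a nested sequence $U_{n+1}\subseteq U_n$ of cells, and since completeness of $u_\alpha$ is equivalent to completeness of $\mathcal A$ by Proposition~\ref{toalha}, the intersection $\bigcap_n U_n$ is nonempty, in fact a single point $\x$ by the Remark following Definition~\ref{nintendoDS}; then $\m{U}{n}[\x]=U_n$ for all $n$, so $\alpha(\x)=(U_n)$.

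For (iii), each factor map $\pi_0(\m{U}{n})\colon\m{O}{}(\m{U}{n})\to\m{U}{n}$ is $1$-Lipschitz by Proposition~\ref{partialmaps}(i); since $\pi$ is the restriction to the inverse limit of the genuine product map $\prod_n\pi_0(\m{U}{n})$ (each factor having its own domain $\m{O}{}(\m{U}{n})$), the remark following Equation~(\ref{productmetric}) applies verbatim, making $\prod_n\pi_0(\m{U}{n})$, and hence its restriction $\pi$, $1$-Lipschitz. For (ii), continuity of $\theta$ is immediate from continuity of each $\theta(\m{U}{n})$ (Proposition~\ref{partialmaps}(iii)) together with continuity of tupled maps into a product. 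When $f$ is uniformly continuous and $\mathcal A$ is tame, tameness supplies $\rho_n>0$ for which each $\m{U}{n}$ is $\rho_n$-separated, so Proposition~\ref{partialmaps}(iv) makes every $\theta(\m{U}{n})$ uniformly continuous.

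The one point requiring care — and the main (mild) obstacle — is that $\theta$ is a tupling $\x\mapsto(\theta(\m{U}{n})(\x))_n$ out of a single domain, not a genuine product, so uniform continuity does not follow from a single common modulus over the infinitely many components. The remedy is to exploit that each $\m{O}{}(\m{U}{n})$ has $d_\Pi$-diameter at most $1$: given $\varepsilon>0$, choose $N$ with $\frac{1}{N+1}<\varepsilon$; then for every $n>N$ the $n$-th contribution to $d_\Pi(\theta(\x),\theta(\y))$ is at most $\frac{1}{n+1}<\varepsilon$ automatically, so only the finitely many components $n\le N$ matter, and taking $\delta$ to be the least of the corresponding moduli from Proposition~\ref{partialmaps}(iv) yields $d(\x,\y)<\delta\Rightarrow d_\Pi(\theta(\x),\theta(\y))<\varepsilon$. (Equivalently one factors $\theta$ through the diagonal $X\to X^\N$, an isometry once $d$ is truncated to be bounded by $1$, followed by the genuine product $\prod_n\theta(\m{U}{n})$.) This finite-cutoff step is exactly what boundedness of $d_\Pi$ buys and is the only place where an infinite family of maps must be handled by hand rather than by citing the remark after Equation~(\ref{productmetric}).
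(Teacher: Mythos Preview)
Your proposal is correct and follows essentially the same approach as the paper, deducing each item from the corresponding single-partition statement in Proposition~\ref{partialmaps} together with the behaviour of product maps under $d_\Pi$. You are in fact more careful than the paper on one point: for (ii) you correctly observe that $\theta$ is a tupling out of a single domain rather than a genuine product map, so the remark after Equation~(\ref{productmetric}) does not apply verbatim; your finite-cutoff argument (or equivalently the factorisation through the diagonal $X\to X^{\N}$) cleanly closes this small gap that the paper's proof glosses over.
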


\begin{Lemma}\label{bebavinhonaquarentena} Let $\sigma_n$ be the shift map on $\m{O}{} (\m{U}{n})$. Then,  $\theta \circ f = \sigma^* \circ \theta$, 
where $ \sigma^*  : =(\sigma_n)^*$ is the map on $\invlim{\hookrightarrow}{\m{O}{}(\m{U}{n})}$ given in Definition~\ref{paralisado}.
\end{Lemma}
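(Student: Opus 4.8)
The plan is to unwind both sides of the claimed identity $\theta \circ f = \sigma^* \circ \theta$ coordinate by coordinate, since $\theta = \prod_n \theta(\m{U}{n})$ and $\sigma^* = (\sigma_n)^*$ is the restriction of $\prod_n \sigma_n$ to the inverse limit. Because a map into a product (and a fortiori into an inverse limit sitting inside a product) is determined by its compositions with the coordinate projections, it suffices to check, for each fixed $n \in \N$, that the $n$-th coordinate of $\theta(f(\x))$ equals the $n$-th coordinate of $\sigma^*(\theta(\x))$ for every $\x \in X$. So first I would reduce to this per-coordinate statement, noting along the way that everything is well defined: $\theta(\x)$ lands in $\invlim{\hookrightarrow}{\m{O}{}(\m{U}{n})}$ by Proposition~\ref{partialmaps}(v) together with the compatibility $\m{U}{n+1}[\y] \subseteq \m{U}{n}[\y]$, and $\sigma^*$ is well defined on this inverse limit by Definition~\ref{paralisado} once one observes the bonding maps $\hookrightarrow$ commute with the shift.

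Next I would carry out the one-line computation in a single coordinate. Fix $n$. By definition $\theta(\m{U}{n})(\x) = (\m{U}{n}[f^i(\x)])_{i \in \N}$, so the $n$-th coordinate of $\theta(f(\x))$ is $(\m{U}{n}[f^i(f(\x))])_{i \in \N} = (\m{U}{n}[f^{i+1}(\x)])_{i \in \N}$. On the other hand, the $n$-th coordinate of $\theta(\x)$ is $(\m{U}{n}[f^i(\x)])_{i \in \N}$, and applying $\sigma_n$ drops the zeroth entry and reindexes, giving exactly $(\m{U}{n}[f^{i+1}(\x)])_{i \in \N}$. The two sequences agree, which is the desired equality in coordinate $n$; since $n$ was arbitrary, $\theta(f(\x)) = \sigma^*(\theta(\x))$ for all $\x$, i.e. $\theta \circ f = \sigma^* \circ \theta$.

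I do not anticipate a serious obstacle here — this is essentially a bookkeeping lemma that records that $\theta$ is a (semi)conjugacy, and the only things requiring a word of care are the well-definedness points mentioned above: that $\theta(X) \subseteq \invlim{\hookrightarrow}{\m{O}{}(\m{U}{n})}$ (the coordinates are compatible under the bonding maps $\hookrightarrow$ because $\m{U}{n+1}$ refines $\m{U}{n}$, so $\m{U}{n+1}[f^i(\x)] \subseteq \m{U}{n}[f^i(\x)]$), and that $\sigma^* = (\sigma_n)^*$ is legitimately defined, which needs $\sigma_n$ to commute with the bonding maps $\hookrightarrow : \m{O}{}(\m{U}{n+1}) \hookrightarrow \m{O}{}(\m{U}{n})$ — immediate, since both operations act entrywise and shifting then refining equals refining then shifting. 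If one wants to be fully rigorous about "a map into an inverse limit is determined coordinatewise," one simply notes $\invlim{\hookrightarrow}{\m{O}{}(\m{U}{n})} \subseteq \prod_n \m{O}{}(\m{U}{n})$ and equality in $\prod_n \m{O}{}(\m{U}{n})$ is equality in every factor.
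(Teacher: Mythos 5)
Your proof is correct and follows essentially the same route as the paper: a coordinatewise computation showing that the $n$-th coordinate of $\theta(f(\x))$ is $(\m{U}{n}[f^{i+1}(\x)])_{i\in\N}$, which is exactly $\sigma_n$ applied to the $n$-th coordinate of $\theta(\x)$. The extra well-definedness remarks (compatibility of coordinates under $\hookrightarrow$ and the shift commuting with the bonding maps) are fine and simply make explicit what the paper leaves implicit.
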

\begin{proof}

Let $\x\in X$. Recall that $\theta(\x)=(O_{n})_{n\in\N}$, where $O_{n,l}$ is the unique element of $ \m{U}{n}$ that contains $f^l(\x)$, $l \in \N$, that is, $O_{n,l}=\m{U}{n}[f^l(x)]$. Notice that $f^l(f(\x))=f^{l+1}(\x) \in O_{n,l+1} = \sigma_n (O_{n,l}) $ for each $l \in \N$. Hence \[\theta(f(\x)) = \big(\sigma_n(O_{n})\big)_{n\in\N}= \sigma^*(\theta(\x))\] as desired.

\end{proof}
\begin{Proposition}\label{contTheta} The following hold.
\begin{enumerate}
    \item Suppose $\mathcal{A}$ is complete. Then, $\theta$ is a homeomorphism.
    \item Suppose $\mathcal{A}$ is complete and tame.  Then, $f$ is uniformly continuous if, and only if, $\theta$ and $\theta^{-1}$ are uniformly continuous. 
 
\end{enumerate}
\end{Proposition}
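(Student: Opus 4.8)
\textbf{Proof plan for Proposition~\ref{contTheta}.}

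The plan is to assemble this from the pieces already in place. For part (i), I would first recall that $\theta = \prod_n \theta(\m{U}{n})$ maps $X$ into $\invlim{\hookrightarrow}{\m{O}{}(\m{U}{n})}$, that it is continuous by Proposition~\ref{fullmaps}(ii), and that by Lemma~\ref{bebavinhonaquarentena} it intertwines $f$ with $\sigma^*$. What remains is to show $\theta$ is a bijection with continuous inverse. Injectivity: if $\x \neq \y$, completeness of $\mathcal{A}$ (more precisely, item~\ref{blob} of Definition~\ref{nintendoDS}) gives some $n$ with $\m{U}{n}[\x] \neq \m{U}{n}[\y]$, so the $0$-th coordinate of $\theta(\x)$ and $\theta(\y)$ in level $n$ already differ. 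Surjectivity is the heart of the matter: given $(O^{(n)})_{n\in\N}$ in the inverse limit (so $O^{(n)} \in \m{O}{}(\m{U}{n})$ and the $O^{(n)}$ are coherent under $\hookrightarrow$), I want to produce $\x \in X$ with $\theta(\x) = (O^{(n)})_n$. The key observation is that for each fixed coordinate $l$, the sets $O^{(n)}_l$ form a decreasing (under refinement) sequence of elements $O^{(0)}_l \supseteq O^{(1)}_l \supseteq \cdots$, one from each $\m{U}{n}$; completeness of $\mathcal{A}$ gives a unique point, call it $\x_l$, in $\bigcap_n O^{(n)}_l$. I then need $\x_l = f^l(\x_0)$ for all $l$, i.e. that the point read off at coordinate $l$ is genuinely the $l$-th iterate of the point read off at coordinate $0$. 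This is where the defining property of $\m{O}{}(\m{U}{n})$ enters: for each $n$ and each $k$, there is $\x^{(n,k)} \in X$ with $f^i(\x^{(n,k)}) \in O^{(n)}_i$ for $0 \le i \le k$; using continuity of $f^l$ together with the fact that $\{\m{U}{n}[\x_0]\}_n$ shrinks to $\{\x_0\}$, one shows $\x^{(n,k)} \to \x_0$ along an appropriate net/subsequence and hence $f^l(\x_0) \in \bigcap_n O^{(n)}_l = \{\x_l\}$. Then $\theta(\x_0) = (O^{(n)})_n$ by construction. Finally, continuity of $\theta^{-1}$: since $X$ equipped with the admissible ultrametric $u_{\mathcal A}$ (Proposition~\ref{toalha}) is complete, and $\theta$ is a continuous bijection — actually one should just observe directly that if two points have $\theta$-images agreeing on the first $m$ coordinates at level $n$, then they lie in a common element of $\m{U}{n}$, whose diameter we can control — continuity of the inverse follows; alternatively one can invoke that a continuous bijection from a Polish space which is open (the carrier-reading map is open because the $\m{U}{n}[\x]$ are clopen) is a homeomorphism.

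For part (ii), the forward direction ($f$ uniformly continuous $\Rightarrow$ $\theta, \theta^{-1}$ uniformly continuous) is essentially already proved: $\theta$ uniformly continuous is exactly Proposition~\ref{fullmaps}(ii) under the tameness hypothesis, and for $\theta^{-1}$ I would use that $\pi_0(\m{U}{n})$-type projections read off $\m{U}{n}[\x]$, combined with tameness — $S_n \to 0$ means the diameter of $\m{U}{n}[\x]$ controls $d(\x,\y)$, so agreement of the level-$n$ $0$-th coordinates of $\theta(\x),\theta(\y)$ forces $d(\x,\y) \le S_n$, giving uniform continuity of $\theta^{-1}$ from $(\invlim{\hookrightarrow}{\m{O}{}(\m{U}{n})}, d_\Pi)$ to $(X,d)$. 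For the reverse direction, if $\theta$ and $\theta^{-1}$ are both uniformly continuous then $f = \theta^{-1} \circ \sigma^* \circ \theta$ by Lemma~\ref{bebavinhonaquarentena}, and $\sigma^* = (\sigma_n)^*$ is uniformly continuous (it is a restriction of a product of shift maps, each of which is uniformly continuous on a shift space with the metric of Definition~\ref{defnshifts}, as remarked after Definition~\ref{paralisado}); a composition of uniformly continuous maps is uniformly continuous, so $f$ is uniformly continuous.

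I expect the main obstacle to be the surjectivity argument in part (i) — specifically, verifying rigorously that the points $\x_l \in \bigcap_n O^{(n)}_l$ obtained coordinatewise actually satisfy $\x_l = f^l(\x_0)$, so that $(O^{(n)})_n$ is literally in the image of $\theta$ and not merely ``approximately'' so. The subtlety is that $\m{O}{}(\m{U}{n})$ is defined by a ``for all $k$, exists $\x$'' condition (a closure/limit of genuine orbit-patterns), not by a single point realizing the whole pattern, so one must pass to a limit and use continuity of each $f^l$ together with the completeness of $\mathcal{A}$ to pin down that limit; handling the two indices $n$ (level) and $k$ (orbit length) simultaneously requires a small diagonal argument. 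Everything else reduces to bookkeeping with the metrics $d_\Pi$, $u_{\mathcal A}$, the refinement structure, and results already established (Propositions~\ref{partialmaps}, \ref{fullmaps}, \ref{toalha}, Lemma~\ref{bebavinhonaquarentena}).
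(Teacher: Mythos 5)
Your proposal is correct and follows essentially the same route as the paper: the paper packages your inverse map as $\beta=\alpha^{-1}\circ\pi$ (with $\alpha$ the isometry onto $\invlim{\hookrightarrow}{\m{U}{n}}$ and $\pi$ the $1$-Lipschitz product of the $\pi_0(\m{U}{n})$), and proves surjectivity exactly as you sketch, by choosing points realizing each pattern $O^{(n)}$ up to coordinate $i$, letting them converge to the point of $\bigcap_n O^{(n)}_0$ given by completeness, and using continuity of $f^i$ to conclude $f^i(\x)\in\bigcap_n O^{(n)}_i$. One small caution: in part (i) tameness is not assumed, so you cannot literally ``control the diameter'' (in $d$) of the common element of $\m{U}{n}$; continuity of $\theta^{-1}$ instead follows from the basis property of the defining sequence (equivalently, admissibility of $u_{\mathcal A}$), which the paper's formulation $\theta^{-1}=\alpha^{-1}\circ\pi$ delivers automatically, and which also yields the uniform continuity of $\theta^{-1}$ in part (ii) once tameness makes $u_{\mathcal A}$ uniformly equivalent to $d$.
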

\begin{proof}
$(i)$ That $\theta$ is continuous follows from Proposition~\ref{fullmaps} $(ii)$. Therefore, it suffices to find a continuous map $\beta$ such that  $\beta \circ \theta = id_X$ and $\theta \circ\beta =id_{\invlim{\hookrightarrow}{\m{O}{}(\m{U}{n})}}$.

We observe that $\alpha =  \pi \circ \theta $. As $\mathcal{A}$ is complete, by Proposition~\ref{fullmaps} $(i)$, we have that $\alpha$ maps bijectively onto $\invlim{\hookrightarrow}{\m{U}{n}}$ and hence $\alpha^{-1}$ is well-defined on its domain, namely $\invlim{\hookrightarrow}{\m{U}{n}}$. Moreover, $\alpha^{-1}$ is an isometry from $\invlim{\hookrightarrow}{\m{U}{n}}$ onto $X$. Let $\beta = \alpha^{-1} \circ \pi$. As the composition of two continuous maps is continuous, we have that $\beta$ is continuous. In addition, $\beta \circ \theta = (\alpha^{-1} \circ  \pi )\circ \theta  = \alpha^{-1} \circ ( \pi \circ \theta) = \alpha^{-1} \circ \alpha =id_X$.

Finally, let us show that $\theta \circ\beta =id_{\invlim{\hookrightarrow}{\m{O}{}(\m{U}{n})}}$. To this end, let $(O_n)_{n\in \N}\in \invlim{\hookrightarrow}{\m{O}{}(\m{U}{n})}$.  Fix $i\in \N$. For every $n\in \N$, since $O_n \in \m{O}{}(\m{U}{n})$,  there exists $\x_n \in O_{n,0}$ such that $f^i(\x_n) \in O_{n,i}$. Furthermore, as $\mathcal{A}$ is complete, we have that $(\x_n) \rightarrow \x$ for some $\x \in X$. We note that $\x = \beta ((O_n))$.  From the continuity of $f$ we have that $f^i(\x_n)\rightarrow f^i(\x)$. On the other hand $f^i(\x_n) \rightarrow \bigcap_n O_{n,i}$. Therefore $f^i(\x)=\bigcap_n O_{n,i}$.  Since the $i$ fixed was arbitrary we obtain that $f^i(\x)=\bigcap_n O_{n,i}$ for all $i$. Hence $\theta \circ \beta = id$ on $\invlim{\hookrightarrow}{\m{O}{}(\m{U}{n})}$.

$(ii)$ Assume that $\mathcal{A}$ is tame. If $f$ is uniformly continuous then, by Proposition~\ref{fullmaps} $(ii)$, we have that $\theta$ is uniformly continuous. The map $\beta = \theta ^{-1}$ is uniformly continuous as it is the composition of the isometry $\alpha^{-1}$ and the 1-Lipschitz map $\pi$.

    Lastly, assume that $\theta$ and $\theta^{-1}$ are uniformly continuous. By Lemma~\ref{bebavinhonaquarentena}, we have that $f = \theta^{-1} \circ \sigma^* \circ \theta$. As $\sigma^*$ is 1-Lipschitz, we conclude that $f$ is uniformly continuous.
\end{proof}

The following theorem follows from Lemma~\ref{bebavinhonaquarentena} and Proposition~\ref{contTheta}.

\begin{Theorem}\label{basicrep} Let $(X,d,f)$ be a dynamical system and $\mathcal{A} =\{\m{U}{n}\}_{n \in \N}$ be a  complete defining sequence of $(X,d)$. 

\begin{enumerate}
    \item $(X,d,f)$ is topologically conjugate to the inverse limit of $(\hookrightarrow,(\m{O}{} (\m{U}{n}),\sigma_n))$,
equipped with the shift map $ \sigma^*  : =(\sigma_n)^*$.
\item If, in addition, $\mathcal{A}$ is tame and $f$ is uniformly continuous, then the conjugacy can be made uniform.
\end{enumerate}
\end{Theorem}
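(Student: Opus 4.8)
The plan is to assemble Theorem~\ref{basicrep} directly from the pieces already established, treating it essentially as a bookkeeping statement. First I would recall that, by construction, $\theta = \prod_n \theta(\m{U}{n})$ maps $X$ into the product $\prod_n \m{O}{}(\m{U}{n})$, and I would verify that its image lands inside the inverse limit $\invlim{\hookrightarrow}{\m{O}{}(\m{U}{n})}$: if $\x \in X$ then $\theta(\x) = (O_n)_{n}$ where $O_{n,l} = \m{U}{n}[f^l(\x)]$, and since $\m{U}{n+1}$ refines $\m{U}{n}$ we have $\m{U}{n+1}[f^l(\x)] \subseteq \m{U}{n}[f^l(\x)]$ for every $l$, which is exactly the compatibility condition $\hookrightarrow$ demands. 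So $\theta$ is a legitimate map into the inverse limit dynamical system. One also checks that the bonding maps $\hookrightarrow$ and the shifts $\sigma_n$ are compatible in the sense of Definition~\ref{paralisado}, i.e.\ $\hookrightarrow \circ\, \sigma_{n+1} = \sigma_n \circ \hookrightarrow$, so that $\sigma^* := (\sigma_n)^*$ is well-defined on $\invlim{\hookrightarrow}{\m{O}{}(\m{U}{n})}$; this is immediate since passing to a coarser partition commutes with deleting the first coordinate.

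For part (i), I would invoke Proposition~\ref{contTheta}(i): since $\mathcal{A}$ is complete, $\theta$ is a homeomorphism from $X$ onto $\invlim{\hookrightarrow}{\m{O}{}(\m{U}{n})}$. Combining this with Lemma~\ref{bebavinhonaquarentena}, which gives the intertwining identity $\theta \circ f = \sigma^* \circ \theta$, we conclude that $\theta$ is a topological conjugacy between $(X,d,f)$ and $(\invlim{\hookrightarrow}{\m{O}{}(\m{U}{n})}, d_\Pi, \sigma^*)$. That is the entire content of (i). For part (ii), under the additional hypotheses that $\mathcal{A}$ is tame and $f$ is uniformly continuous, Proposition~\ref{contTheta}(ii) tells us that both $\theta$ and $\theta^{-1}$ are uniformly continuous; hence the conjugating homeomorphism $\theta$ is in fact a uniform conjugacy, as required by the definition of uniform conjugacy given earlier.

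Since every component statement is already proved, the main task here is purely organizational: making sure the hypotheses of the cited results match up (completeness for (i); completeness plus tameness plus uniform continuity of $f$ for (ii)), and that the conjugacy is realized by the single map $\theta$ rather than by different maps in the two parts. If there is any subtlety at all, it is in confirming that the inverse limit dynamical system appearing in the statement is literally the one built from the data $(\hookrightarrow,(\m{O}{}(\m{U}{n}),\sigma_n))$ as in Definition~\ref{paralisado} — that is, that the bonding maps satisfy $\sigma_n \circ \hookrightarrow = \hookrightarrow \circ\, \sigma_{n+1}$ so that $\sigma^*$ makes sense — but as noted above this commutation is trivial. I do not expect any genuine obstacle; the proof is a two-line deduction citing Lemma~\ref{bebavinhonaquarentena} and Proposition~\ref{contTheta}.
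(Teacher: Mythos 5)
Your proposal is correct and follows exactly the paper's route: the paper derives Theorem \ref{basicrep} precisely by combining the intertwining identity of Lemma \ref{bebavinhonaquarentena} with Proposition \ref{contTheta} (part (i) for the homeomorphism, part (ii) for uniformity), with the conjugacy realized by the single map $\theta$. The extra verifications you include (that $\theta$ lands in the inverse limit and that $\hookrightarrow$ commutes with the shifts) are harmless bookkeeping already implicit in the paper's construction.
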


The following proposition gives a useful characterization of the finite shadowing property.

\begin{Proposition}\label{shadowreform}
Let $(X,d,f)$ be a dynamical system and $\mathcal{A} =\{\m{U}{n}\}_{n \in \N}$ be a tame defining sequence of $(X,d)$. Then, $f$ has the finite shadowing property if, and only if, for every $m\in \N$, there is $n > m$ such that $\m{PO}{}(\m{U}{n}) \hookrightarrow  \m{O}{}(\m{U}{m}) $.
\end{Proposition}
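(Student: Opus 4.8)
The plan is to unravel what the finite shadowing property says in terms of the partitions $\m{U}{n}$, translating the $\varepsilon$-$\delta$ statement into a combinatorial containment between the shift spaces $\m{PO}{}$ and $\m{O}{}$. The key dictionary is: since $\mathcal{A}$ is tame, the balls $B_\varepsilon(\x)$ are (for appropriate $\varepsilon$) exactly the carriers $\m{U}{n}[\x]$, so ``$\z$ $\varepsilon$-shadows $(\x_i)$'' becomes ``$f^i(\z)\in \m{U}{n}[\x_i]$ for all $i$'' for the appropriate $n$, and a finite sequence of carriers from $\m{U}{n}$ that is realized this way is precisely a finite block appearing in some point of $\m{O}{}(\m{U}{n})$. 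Likewise, a $\delta$-pseudo-orbit, when $\delta$ is small enough that $\delta$-balls refine $\m{U}{m}$, gives a sequence of carriers $(O_i)$ in $\m{U}{m}$ with $f(O_i')\cap O_{i+1}'\neq\emptyset$ for carriers $O_i'$ containing $f^{i}$-images — i.e. a point of $\m{PO}{}(\m{U}{m})$ (up to passing between refinement levels, which is where one uses $S_n\to 0$ and $\rho_n$-separation).

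Concretely, for the forward direction I would fix $m$, apply finite shadowing with $\varepsilon$ small enough that $\varepsilon$-balls refine $\m{U}{m}$ (possible since $S_m\to 0$ forces carriers to shrink, but more to the point, one uses that $\m{U}{m}$ is $\rho_m$-separated so taking $\varepsilon<\rho_m$ makes $\varepsilon$-balls lie inside single carriers), obtaining $\delta>0$; then choose $n>m$ with $S_n<\delta$ so that each carrier of $\m{U}{n}$ has diameter $<\delta$. Given $(O_i)_{i\in\N}\in \m{PO}{}(\m{U}{n})$, for each finite window $0\le i\le k$ pick witnesses: since $f(O_i)\cap O_{i+1}\neq\emptyset$ pick $\y_i\in O_i$ with $f(\y_i)\in O_{i+1}$; then $(\y_0, f(\y_1), f^2(\y_2),\dots)$ — more carefully, the sequence $\x_i := \y_i$ read off as points — forms a finite $\delta$-pseudo-orbit because $d(f(\x_i),\x_{i+1})\le \operatorname{diam}(O_{i+1})<\delta$ (both $f(\y_i)$ and $\y_{i+1}$ lie in $O_{i+1}$). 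Finite shadowing gives $\z$ with $f^i(\z)\in B_\varepsilon(\x_i)\subseteq$ the carrier of $\x_i$ in $\m{U}{m}$, which is the unique element of $\m{U}{m}$ containing $O_i$; hence $\z$ witnesses that the $\m{U}{m}$-block associated to $(O_0,\dots,O_k)$ under $\hookrightarrow$ is realized, i.e. $(O_i)\hookrightarrow \m{O}{}(\m{U}{m})$. Since this works for every $k$, we get the image point lies in $\m{O}{}(\m{U}{m})$.

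For the converse, suppose that for every $m$ there is $n>m$ with $\m{PO}{}(\m{U}{n})\hookrightarrow \m{O}{}(\m{U}{m})$. Given $\varepsilon>0$, choose $m$ with $S_m<\varepsilon$ (so $\m{U}{m}$-carriers are $\varepsilon$-small), take the associated $n$, and then choose $\delta\le\rho_n$ small enough (also $\delta<$ the Lebesgue-type number making $\delta$-balls refine $\m{U}{n}$, available by $\rho_n$-separation) so that any finite $\delta$-pseudo-orbit $(\x_i)_{0\le i\le k}$ has each consecutive pair $f(\x_i),\x_{i+1}$ lying in a common carrier; reading off the carriers $O_i:=\m{U}{n}[\x_i]$ gives a block extendable to a point of $\m{PO}{}(\m{U}{n})$ (extend arbitrarily past $k$ using that $f(O_k)$ is nonempty so some carrier meets its image, iterating). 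By hypothesis its image under $\hookrightarrow$ lies in $\m{O}{}(\m{U}{m})$, so there is $\z\in X$ with $f^i(\z)\in \m{U}{m}[\x_i]$ for $0\le i\le k$; since $\operatorname{diam}(\m{U}{m}[\x_i])\le S_m<\varepsilon$, this $\z$ $\varepsilon$-shadows $(\x_i)$.

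The main obstacle I anticipate is the careful bookkeeping at the two refinement levels $m<n$: one must pick $n$ large enough that $\m{U}{n}$-carriers fit inside the $\delta$-tolerance of the shadowing at level governed by $m$, while simultaneously ensuring $\delta$ itself is small relative to $\rho_n$ so that $\delta$-pseudo-orbits can be ``rounded'' to carrier-sequences in $\m{U}{n}$ without ambiguity — and the asymmetry between the two directions (one needs $S_n<\delta(\varepsilon)$ in one direction, $\delta<\rho_n$ and $S_m<\varepsilon$ in the other) has to be tracked precisely. The extension-of-finite-blocks point (a finite $\m{PO}{}$-block always extends to an infinite element of $\m{PO}{}$) is routine once one observes $\m{PO}{}(\m{U}{n})$ is a 1-step shift and $f(O)$ is always nonempty, so some carrier meets it.
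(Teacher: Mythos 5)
Your proposal is correct and follows essentially the same route as the paper's proof: in the forward direction you take $\varepsilon<\rho_m$, get $\delta$ from finite shadowing, choose $n$ with $S_n<\delta$, turn a $\m{PO}{}(\m{U}{n})$-sequence into a $\delta$-pseudo-orbit via witnesses and use $\rho_m$-separation to place the shadowing point's orbit in the $\m{U}{m}$-carriers; in the converse you take $S_m<\varepsilon$, $\delta<\rho_n$, round the pseudo-orbit to carriers in $\m{U}{n}$ and invoke the hypothesis. The only cosmetic difference is how the finite pseudo-orbit is extended to an infinite element of $\m{PO}{}(\m{U}{n})$ (the paper appends the genuine orbit of the last point, you extend by repeatedly choosing a carrier meeting the image), which is immaterial.
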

\begin{proof}
Let $\mathcal{A} =\{\m{U}{n}\}_{n \in \N}$ be a tame defining sequence of $(X,d)$ and let $\rho _n>0$, $n \in \N$, be such that $\m{U}{n}$ is $\rho_n$-separated.

Suppose that $f:X\to X$ has the finite shadowing property. Given $m \in \N$ choose $\varepsilon >0$ such that $\varepsilon < \rho_m$. Let $0 < \delta<\varepsilon$ be such that any finite $\delta$-pseudo-orbit is $\varepsilon$-shadowed, and take $n>m$ such that $S_n<\delta$. 

Let $(O_{i})_{i\in\N}$ be a sequence in $\m{PO}{}(\m{U}{n})$. Then, $(O_{i}) \hookrightarrow (V_i)$ where $V_i \in \m{U}{m}$. We have to prove that $(V_i) \in \m{O}{}(\m{U}{m})$.

By the definition of $\m{PO}{}(\m{U}{n})$, there exists a sequence $(\x_i)$ in $X$ such that $\x_0\in O_0$ and $f(\x_i), \x_{i+1} \in O_{i+1}$ for all $i\in \N$ (so $\x_i\in V_i$ for $i\in \N$). Since $\text{diam}(O_i)\leq S_n<\delta$ for every $i$, we have that $(\x_i)$ is a $\delta$-pseudo-orbit. Given $k>0$, let $\z \in X$ be a point that $\varepsilon$-shadows the sequence $(\x_i)_{i=0}^{k}$, i.e., such that $d(f^i(\z), \x_i)< \varepsilon$ for all $i=0, \ldots k$. Since $\x_i\in O_i \subseteq V_i$ for all $i$, the definition of $\rho_n$ and choice of $\varepsilon$ imply that $f^i(\z)\in  V_i$, for $i=1,\ldots, k$. Hence $(V_i) \in \m{O}{}(\m{U}{m})$ as desired.

We now prove the converse. Suppose that for each $m \in \N$ there is $n > m$ such that $\m{PO}{}(\m{U}{n}) \hookrightarrow  \m{O}{}(\m{U}{m})$. Given $\varepsilon>0$, take $m\in\N$ such that $S_m<\varepsilon$. Let $n>m$ be such that $\m{PO}{}(\m{U}{n}) \hookrightarrow  \m{O}{}(\m{U}{m})$ and take $\delta< \rho_n$.

Let $(\x_i)_{i=1}^k$ be a finite $\delta$-pseudo-orbit. Then $(\y_i)_{i\in \N}$, where $\y_i=\x_i$ for $i=0,\ldots,k$ and $\y_{k+j} = f^j(\x_k)$, for $j=1, 2, \ldots$, is a $\delta$-pseudo-orbit. For each $i\in \N$, let $O_i\in \m{U}{n}$ be such that $\y_i \in O_i$. Since $d(f(\y_i),\y_{i+1})<\delta<\rho_n$ we have, from the definition of $\rho_n$, that $f(\y_i),\y_{i+1} \in O_i$ for all $i$. Hence $(O_i)\in \m{PO}{}(\m{U}{n})$. Let $(V_i)\in \m{O}{}(\m{U}{m})$
be such that $ (O_i) \hookrightarrow (V_i)$. Let $\z\in X$ be such that $f^i(\z) \in V_i$ for each $i=0,\ldots, k$. Then for all $i=0,\ldots, k$ both $f^i(\z)$ and $\x_i$ belong to $V_i$. Since $\text{diam}(V_i)<S_m<\varepsilon$ we conclude that $\z$ $\varepsilon$-shadows $(\x_i)_{i=1}^k$ and hence $f$ has the finite shadowing property.

\end{proof}

 We now prove our characterization of the finite shadowing property in terms of inverse limits.

\begin{Theorem}\label{ThmInvLimSh}
Let $(X,d,f)$ be a dynamical system and $\mathcal{A} =\{\m{U}{n}\}_{n \in \N}$ be a complete tame defining sequence of $(X,d)$. Furthermore, assume that $f$ has the finite shadowing property.
\begin{enumerate}
    \item The map $f$ is conjugate to the inverse limit of a sequence of 1-step shifts on a countable alphabet. More precisely, $(X,d,f)$ is conjugate to $   (\invlim{\hookrightarrow}{\m{PO}{}(\m{U}{n})},\sigma^*)$ 
    \item Moreover, if $f$ is uniformly continuous, then the conjugacy can be made uniform. 
\end{enumerate}
\end{Theorem}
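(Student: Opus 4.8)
The plan is to combine Theorem~\ref{basicrep}, which already gives a (uniform, when $f$ is uniformly continuous) conjugacy between $(X,d,f)$ and $(\invlim{\hookrightarrow}{\m{O}{}(\m{U}{n})},\sigma^*)$, with Proposition~\ref{shadowreform}, which translates the finite shadowing hypothesis into the statement that for every $m$ there is $n>m$ with $\m{PO}{}(\m{U}{n}) \hookrightarrow \m{O}{}(\m{U}{m})$. The key observation is that the two inverse systems $(\hookrightarrow, \m{O}{}(\m{U}{n}))$ and $(\hookrightarrow, \m{PO}{}(\m{U}{n}))$ are \emph{cofinally sandwiched}: by Lemma~\ref{basicdeflem}(i) we always have $\m{O}{}(\m{U}{n}) \hookrightarrow \m{PO}{}(\m{U}{n})$, and by finite shadowing (Proposition~\ref{shadowreform}) we can pass, along a subsequence, from $\m{PO}{}(\m{U}{n})$ back down into $\m{O}{}(\m{U}{m})$. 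Interleaving these two families of bonding maps along a suitable increasing sequence of indices should produce a commuting ladder between the two inverse systems.

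First I would fix, using Proposition~\ref{shadowreform}, an increasing sequence $n_0 < n_1 < n_2 < \cdots$ such that $\m{PO}{}(\m{U}{n_{k+1}}) \hookrightarrow \m{O}{}(\m{U}{n_k})$ for every $k$. Then I would set up the diagram whose $k$-th stage alternates
\[
\m{PO}{}(\m{U}{n_{k+1}}) \hookrightarrow \m{O}{}(\m{U}{n_k}) \hookrightarrow \m{PO}{}(\m{U}{n_k}) \hookrightarrow \m{O}{}(\m{U}{n_{k-1}}) \hookrightarrow \cdots,
\]
all maps being restrictions of the canonical refinement map $\hookrightarrow$, hence automatically commuting with each other and with the shift maps (the shift maps on the $\m{O}{}$'s and $\m{PO}{}$'s are all restrictions of the coordinate shift, so $\hookrightarrow$ intertwines them). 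A standard cofinality argument for inverse limits then yields a homeomorphism
\[
\invlim{\hookrightarrow}{\m{O}{}(\m{U}{n})} \;\cong\; \invlim{\hookrightarrow}{\m{PO}{}(\m{U}{n})}
\]
which commutes with the respective $\sigma^*$ maps. Restricting to the subsequence $\{n_k\}$ does not change the inverse limit (it is cofinal in $\N$), so the inverse limit over all $n$ of the $\m{PO}{}$'s agrees with the one over the subsequence. Composing this with the conjugacy of Theorem~\ref{basicrep}(i) gives part~(i). For part~(ii), I would check that every map in sight is $1$-Lipschitz: the refinement maps $\hookrightarrow$ are $1$-Lipschitz (as noted in the paper, keeping the metric of Equation~(\ref{productmetric}) in mind), and $\theta, \theta^{-1}$ are uniformly continuous by Proposition~\ref{contTheta}(ii) when $f$ is uniformly continuous and $\mathcal{A}$ is complete and tame; since products and restrictions of uniformly continuous maps are uniformly continuous, the composite conjugacy and its inverse are uniformly continuous.

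The main obstacle I expect is verifying carefully that the induced map between the two inverse limits is genuinely a \emph{bijection} with continuous inverse, rather than merely a continuous map — in other words, making the "cofinal sandwich" argument rigorous at the level of the inverse limits (not just the index systems). Concretely, given $(P_n)_n \in \invlim{\hookrightarrow}{\m{PO}{}(\m{U}{n})}$, one must produce a preimage in $\invlim{\hookrightarrow}{\m{O}{}(\m{U}{n})}$: for each $m$, the element $P_{n_{k+1}}$ (for the appropriate $k$ with $n_k \ge m$) maps under $\hookrightarrow$ into $\m{O}{}(\m{U}{n_k})$ and then into $\m{O}{}(\m{U}{m})$, and one must check these choices are coherent across $m$, which follows from the fact that all the maps are restrictions of the single map $\hookrightarrow$ on full shift spaces, so coherence is inherited from the coherence of $(P_n)_n$. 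Injectivity is immediate because $\hookrightarrow \colon \m{O}{}(\m{U}{n}) \to \m{O}{}(\m{U}{m})$ composed back up recovers the original sequence. Once this bookkeeping is done, continuity of the inverse is automatic from the inverse limit topology, and the uniform-continuity upgrade in part~(ii) is routine.
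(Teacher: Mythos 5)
Your proposal is correct and follows essentially the same route as the paper: the paper likewise interleaves the $\m{O}{}$- and $\m{PO}{}$-systems along a subsequence furnished by Proposition~\ref{shadowreform} (together with $\m{O}{}(\m{U}{k})\subseteq\m{PO}{}(\m{U}{k})$), identifies the two factor subsystems with the interleaved system by the standard cofinality argument, invokes Theorem~\ref{basicrep} to identify the $\m{O}{}$-side with $(X,d,f)$, and passes from the subsequence of $\m{PO}{}$'s back to the full sequence, with uniformity in part (ii) coming from the $1$-Lipschitz refinement maps and the uniform version of Theorem~\ref{basicrep}.
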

\begin{proof}
As $\mathcal{A}$ is a tame defining sequence of $X$, by Proposition~\ref{shadowreform}, for each $m \in \N$, we can choose $n >m $ such that 
\[\m{PO}{}(\m{U}{n}) \hookrightarrow  \m{O}{}(\m{U}{m}) .\] 
Moreover, for each $k \in \N$,  $\m{O}{}(\m{U}{k}) \subseteq  \m{PO}{}(\m{U}{k}) $. Hence, we can choose increasing sequences of positive integer 
\[m_1 < n_1 < m_2 < n_2\ldots\]
such that 
\[ \tag{a} \m{O}{}(\m{U}{m_1}) \hookleftarrow  \m{PO}{}(\m{U}{n_1}) \hookleftarrow \m{O}{}(\m{U}{m_2}) \hookleftarrow  \m{PO}{}(\m{U}{n_2}) \ldots
\]
The mapping on this inverse limit space $(a)$ is the shift map on each coordinate space. Now consider the following two factors of the above dynamical system:
\[ \tag{b} \m{O}{}(\m{U}{m_1}) \hookleftarrow  \m{O}{}(\m{U}{m_2}) \hookleftarrow   \ldots
\]
\[ \tag{c} \m{PO}{}(\m{U}{n_1}) \hookleftarrow   \m{PO}{}(\m{U}{n_2}) \hookleftarrow   \ldots
\]
Each of the dynamical system $(b)$ and the dynamical system $(c)$ is uniformly conjugate to the dynamical system $(a)$, and hence $(b)$ is uniformly conjugate to $(c)$. As $\mathcal{A}$ is complete, by Theorem~\ref{basicrep}, the dynamical system $(b)$ is conjugate to $(X,d,f)$.  By Lemma~\ref{basicdeflem}, each $\m{PO}{}(\m{U}{n_i})$ is 1-step shift. Hence, we have that the dynamical system $(X,d,f)$ is conjugate to the dynamical system 
\[ \tag{c} \m{PO}{}(\m{U}{n_1}) \hookleftarrow   \m{PO}{}(\m{U}{n_2}) \hookleftarrow   \ldots,
\]
consisting of 1-step shift spaces. Now note that $(c)$ is actually uniformly conjugate to the system
\[ \m{PO}{}(\m{U}{1}) \hookleftarrow   \m{PO}{}(\m{U}{2}) \hookleftarrow   \ldots,
\]
completing the proof of $(i)$.

$(ii)$ Suppose that $f$ is uniformly continuous. Then, by Theorem~\ref{basicrep}, $(b)$ is actually uniformly conjugate to $(X,d,f)$. Now the proof proceeds as in $(i)$ since all other conjugacies in question are uniform conjugacies.
\end{proof}

We now describe the relationship between the shadowing property and the finite shadowing property below.

\begin{Theorem}\label{joelho} Let $(X,d,f)$ be a dynamical system and $\mathcal{A} =\{\m{U}{n}\}_{n \in \N}$ be a tame defining sequence of $(X,d)$.
\begin{enumerate}
  \item \label{quadril} If $f$ has the shadowing property then $f$ has the finite shadowing property and $ (\hookrightarrow, \m{PO}{}(\m{U}{n}))$ satisfies the Mittag-Leffler Condition.

    \item Suppose that $\mathcal A$ is complete and $f$ is uniformly continuous. If $f$ has the finite shadowing property, and $ (\hookrightarrow, \m{PO}{}(\m{U}{n}))$ satisfies the Mittag-Leffler Condition, then $f$ has the shadowing property.
  
\end{enumerate} 

\end{Theorem}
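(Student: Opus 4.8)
The plan is to prove the two implications separately, using the inverse-limit representation of $(X,d,f)$ developed in this section together with Theorems~\ref{caipirinha} and~\ref{ThmInvLimSh} and Proposition~\ref{shadow-finiteshadow-porder}.

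For~(i), the passage from shadowing to finite shadowing is routine: given a finite $\delta$-pseudo-orbit $(\x_0,\dots,\x_k)$, extend it to the infinite $\delta$-pseudo-orbit obtained by appending the genuine trajectory $\x_{k+j}:=f^{j}(\x_k)$, and observe that an $\varepsilon$-shadowing point of the extension $\varepsilon$-shadows the original. The substance is the Mittag--Leffler Condition for $(\hookrightarrow,\m{PO}{}(\m{U}{n}))$. I would fix $N$, take $\rho_N>0$ with $\m{U}{N}$ being $\rho_N$-separated, set $\varepsilon:=\rho_N/2$, choose a shadowing constant $\delta>0$ for this $\varepsilon$, and pick $k>N$ with $S_{k+1}<\delta$ (possible since $S_n\to 0$). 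I then claim that for every $i\ge k$ the maps $\m{PO}{}(\m{U}{k+1})\hookrightarrow\m{PO}{}(\m{U}{N})$ and $\m{PO}{}(\m{U}{i+1})\hookrightarrow\m{PO}{}(\m{U}{N})$ have the same image, which is exactly the condition at level $N$. The inclusion of the image from level $i+1$ into that from level $k+1$ is formal, since for $i\ge k$ the bonding map from level $i+1$ down to level $N$ factors through level $k+1$. For the reverse inclusion, given $(W_j)_{j}\in\m{PO}{}(\m{U}{N})$ with a lift $(O_j)_{j}\in\m{PO}{}(\m{U}{k+1})$, I would pick for each $j$ a point $\x_j\in O_j$ with $f(\x_j)\in O_{j+1}$ (possible as $f(O_j)\cap O_{j+1}\ne\emptyset$); then $f(\x_j)$ and $\x_{j+1}$ both lie in $O_{j+1}\in\m{U}{k+1}$, so $d(f(\x_j),\x_{j+1})\le S_{k+1}<\delta$ and $(\x_j)_j$ is an infinite $\delta$-pseudo-orbit. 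Shadowing yields $\z$ with $d(f^j(\z),\x_j)<\varepsilon<\rho_N$; since $\x_j\in O_j\subseteq W_j\in\m{U}{N}$ and distinct cells of $\m{U}{N}$ are at least $\rho_N$ apart, $f^j(\z)$ lies in $W_j$ for all $j$. Therefore $\theta(\m{U}{i+1})(\z)=(\m{U}{i+1}[f^j(\z)])_j\in\m{O}{}(\m{U}{i+1})\subseteq\m{PO}{}(\m{U}{i+1})$ (Lemma~\ref{basicdeflem}) is carried by $\hookrightarrow$ onto $(\m{U}{N}[f^j(\z)])_j=(W_j)_j$, placing $(W_j)_j$ in the image from level $i+1$.

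For~(ii), I would assemble earlier results. By Theorem~\ref{ThmInvLimSh}(ii), the hypotheses ($\mathcal A$ complete and tame, $f$ uniformly continuous with the finite shadowing property) make $(X,d,f)$ uniformly conjugate to $(\invlim{\hookrightarrow}{\m{PO}{}(\m{U}{n})},\sigma^*)$. By Lemma~\ref{basicdeflem}, each $\m{PO}{}(\m{U}{n})$ is a $1$-step subshift, hence a shift of finite order (Definition~\ref{defn:p-shifts}), so Proposition~\ref{shadow-finiteshadow-porder} gives that each $(\m{PO}{}(\m{U}{n}),d_\Pi,\sigma)$ has the shadowing property. The bonding maps $\hookrightarrow$ are uniformly continuous, the product metrics on the shift spaces are bounded by $1$, and $\sigma\circ{\hookrightarrow}={\hookrightarrow}\circ\sigma$, so with the assumed Mittag--Leffler Condition Theorem~\ref{caipirinha}(i) shows that $(\invlim{\hookrightarrow}{\m{PO}{}(\m{U}{n})},d_\Pi,\sigma^*)$ has the shadowing property; Proposition~\ref{UnifCong} then carries the shadowing property back across the uniform conjugacy to $(X,d,f)$.

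I expect the Mittag--Leffler argument in part~(i) to be the only real obstacle. Because the alphabets $\m{U}{n}$ are merely countable, the shift spaces $\m{PO}{}(\m{U}{n})$ are not compact, so one cannot produce the stabilizing lift by a compactness or diagonal argument; the point is instead that a single element of $\m{PO}{}(\m{U}{k+1})$ gives rise to a genuine $\delta$-pseudo-orbit in $X$ whose $\varepsilon$-shadowing orbit, with $\varepsilon<\rho_N$, is pinned inside the prescribed cells of $\m{U}{N}$ and therefore simultaneously witnesses the required lift at every finer level. Part~(ii), by contrast, is straightforward bookkeeping with Theorems~\ref{ThmInvLimSh} and~\ref{caipirinha} and Propositions~\ref{shadow-finiteshadow-porder} and~\ref{UnifCong}.
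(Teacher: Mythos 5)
Your proposal is correct and follows essentially the same route as the paper: for the Mittag--Leffler part you turn an element of $\m{PO}{}(\m{U}{k+1})$ into a $\delta$-pseudo-orbit in $X$, shadow it within $\rho_N$, use the $\rho_N$-separation of $\m{U}{N}$ to pin the shadowing orbit in the prescribed cells, and read off the lift from the orbit's carriers at the finer level, exactly as in the paper (which leaves the trivial inclusion and the passage from shadowing to finite shadowing implicit). Part~(ii) is verbatim the paper's argument via Theorem~\ref{ThmInvLimSh}, Proposition~\ref{shadow-finiteshadow-porder}, Theorem~\ref{caipirinha} and Proposition~\ref{UnifCong}.
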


\begin{proof}

Suppose that $f$ has the shadowing property. 
Let $N\in \N$ and pick $\delta$ that witnesses the shadowing property for $\rho_N$. Choose $k\geq N$ such that $S_k <\delta$ and let $i\geq k$. Let $(O_j)\in \m{PO}{}(\m{U}{k})$ and $(V_j)\in \m{PO}{}(\m{U}{N})$ be such that $(O_j) \hookrightarrow (V_j)$. We have to show that 
there exists $(W_j)\in \m{PO}{}(\m{U}{i})$ such that  $(W_j) \hookrightarrow (V_j)$.

From the definition of $\m{PO}{}(\m{U}{k})$ we can find an $S_k$ pseudo-orbit $(\x_0, \x_1,...)$ such that $\x_j \in O_j$ for all $j$. By the shadowing property of $f$, there is a $\z$ that $\rho_N$ shadows $(\x_0, \x_1,...)$, that is, $d(f^j(\z),\x_j)<\rho_N$ for all $j$. Notice that the orbit of $\z$ determines an element in $\m{PO}{}(\m{U}{i})$. More precisely, let $W_i \in \{\m{U}{i}\} $ be such that $f^i(\z) \in W_i$. Finally, notice that $\x_j\in V_j$ and, since $d(f^j(\z),\x_j)<\rho_N$, we have that $f^j(\z)\in V_j$ and hence $(W_j) \hookrightarrow (V_j)$ as desired.

Now suppose that $\mathcal A$ is complete.  By Theorem~\ref{ThmInvLimSh}, we have that $f$ is uniformly conjugate to $   (\invlim{\hookrightarrow}{\m{PO}{}(\m{U}{n})},\sigma^*)$, an inverse limit of 1-step shifts satisfying the Mittag-Leffler Condition. By Proposition~\ref{shadow-finiteshadow-porder} each of these 1-step shifts has the shadowing property. By Theorem~\ref{caipirinha}, we have that the inverse limit space of these 1-step shifts has the shadowing property. As of now we have that $f$ is uniformly conjugate to a space with the shadowing property. We conclude, by Proposition~\ref{UnifCong}, that $f$ has the shadowing property.

\end{proof}

\begin{Corollary}\label{CorMain} 
Let $(X,d,f)$ be a dynamical system and $\mathcal{A} =\{\m{U}{n}\}_{n \in \N}$ be a complete tame defining sequence of $(X,d)$. Furthermore, assume that $f$ is uniformly continuous. Then, $f$ has the shadowing property if, and only if, $f$ is uniformly conjugate to an inverse limit dynamical system, in which the associated inverse system consists of a sequence of 1-step shifts on a countable alphabet, with uniformly continuous bonding maps, and satisfies the Mittag-Leffler Condition.
\end{Corollary}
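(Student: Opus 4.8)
\section*{Proof proposal for Corollary~\ref{CorMain}}

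The plan is to obtain the corollary by assembling three of the main results already in hand: Theorem~\ref{ThmInvLimSh} (inverse limit description under finite shadowing), the first part of Theorem~\ref{joelho} (shadowing forces the Mittag--Leffler Condition), and Theorem~\ref{caipirinha} (inverse limits of shadowing systems satisfying Mittag--Leffler have shadowing), together with Proposition~\ref{shadow-finiteshadow-porder} and Proposition~\ref{UnifCong}. No new idea is needed; the content is checking that the hypotheses of each cited result are met.

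For the forward implication, suppose $f$ has the shadowing property. By the first part of Theorem~\ref{joelho}, $f$ then has the finite shadowing property and the inverse system $(\hookrightarrow,\m{PO}{}(\m{U}{n}))$ satisfies the Mittag--Leffler Condition. Since $\mathcal{A}$ is a complete tame defining sequence and $f$ is uniformly continuous, Theorem~\ref{ThmInvLimSh} applies and yields that $f$ is \emph{uniformly} conjugate to $(\invlim{\hookrightarrow}{\m{PO}{}(\m{U}{n})},\sigma^*)$. By Lemma~\ref{basicdeflem}, each coordinate space $\m{PO}{}(\m{U}{n})$ is a $1$-step shift on the alphabet $\m{U}{n}$, which is countable because $X$ is separable; the bonding maps $\hookrightarrow$ are uniformly continuous, as noted when $\hookrightarrow$ was introduced; and the Mittag--Leffler Condition holds by the previous sentence. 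This is exactly the required inverse limit description, so one direction of the equivalence is complete.

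For the converse, suppose $f$ is uniformly conjugate to an inverse limit dynamical system $(\invlim{g_m}{X_m},d_\Pi,(f_m)^*)$ in which each $(X_m,f_m)$ is a $1$-step shift on a countable alphabet with its shift metric, the $g_m$ are uniformly continuous, and the system satisfies the Mittag--Leffler Condition. A $1$-step shift has order $2$, hence is a shift of finite order, so Proposition~\ref{shadow-finiteshadow-porder} gives that each $(X_m,d_\Pi,f_m)$ has the shadowing property. The shift metric is bounded by $1$, the bonding maps are uniformly continuous, and the Mittag--Leffler Condition holds, so Theorem~\ref{caipirinha} applies and the inverse limit dynamical system has the shadowing property. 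Finally, Proposition~\ref{UnifCong} transports the shadowing property along the uniform conjugacy back to $f$.

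I expect no substantial obstacle; the delicate points are purely bookkeeping. First, one must make sure the bonding maps in the forward direction are genuinely uniformly continuous — this is precisely the remark recorded when the map $\hookrightarrow$ was defined. Second, in the converse one must note that the shift metric on each coordinate space is bounded by $1$, which is what allows Theorem~\ref{caipirinha} to be invoked. Third, one should check that ``$1$-step shift'' falls under the finite-order hypothesis of Proposition~\ref{shadow-finiteshadow-porder}, which it does since order $2$ is finite. Completeness and tameness of $\mathcal{A}$, and uniform continuity of $f$, are needed only in the forward direction, where they feed into Theorem~\ref{ThmInvLimSh} and Theorem~\ref{joelho}.
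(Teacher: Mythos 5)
Your proposal is correct and follows essentially the same route as the paper: the forward direction combines Theorem~\ref{ThmInvLimSh} with part~(i) of Theorem~\ref{joelho} to get the uniform conjugacy to $(\invlim{\hookrightarrow}{\m{PO}{}(\m{U}{n})},\sigma^*)$ with the Mittag--Leffler Condition, and the converse uses Proposition~\ref{shadow-finiteshadow-porder}, Theorem~\ref{caipirinha}, and Proposition~\ref{UnifCong} exactly as the paper does. Your bookkeeping remarks (countability of the alphabets, uniform continuity of $\hookrightarrow$, boundedness of the shift metric) are just made explicit where the paper leaves them implicit.
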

\begin{proof}

If $f$ has the shadowing property then just apply Theorem~\ref{ThmInvLimSh} and Theorem~\ref{joelho}.

For the converse, notice that by Proposition~\ref{shadow-finiteshadow-porder} we have that each of the 1-step shifts in the inverse limit has the shadowing property. By Theorem~\ref{caipirinha}, we have that the inverse limit space of these 1-step shifts has the shadowing property. As of now we have that $f$ is uniformly conjugate to a space with the shadowing property. We conclude, by Proposition~\ref{UnifCong}, that $f$ has the shadowing property. 
\end{proof}

\begin{Remark}
Example~\ref{FinShadowEx} shows that Corollary~\ref{CorMain} is sharp, i.e., one cannot drop the hypothesis of $f$ being uniformly continuous. 
\end{Remark}

Next we describe a class of functions-spaces where the shadowing property and the finite shadowing property agree. More precisely, we have the following.

\begin{Proposition} \label{balance}
 Let $(X,d,f)$ be a dynamical system and $\mathcal{A} =\{\m{U}{n}\}_{n \in \N}$ be a  tame defining sequence of $(X,d)$. Furthermore, assume that for some $n$,  the partition $\m{U}{n}$ consists of compact sets. Then a uniformly continuous map $f : X \rightarrow X $ has the shadowing property if, and only if, it has the finite shadowing property. 
\end{Proposition}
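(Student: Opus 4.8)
The forward implication is immediate, since the shadowing property trivially entails the finite shadowing property. For the converse the plan is to show that, under the stated hypotheses, $X$ is \emph{uniformly locally compact}, and then to invoke Proposition~\ref{UnifLCThm}, which does the rest.

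Concretely, I would fix $n$ for which $\m{U}{n}$ consists of compact sets and use the tameness of $\mathcal A$ to obtain $\rho_n>0$ such that $\m{U}{n}$ is $\rho_n$-separated. The only real point is the inclusion $B_{\rho_n}(\x)\subseteq \m{U}{n}[\x]$, valid for every $\x\in X$: if some $\y$ belonged to a member of $\m{U}{n}$ different from $\m{U}{n}[\x]$, then $\rho_n$-separation would give $d(\x,\y)\ge\rho_n$, contradicting $\y\in B_{\rho_n}(\x)$. Hence every open ball of radius $\rho_n$ is contained in the compact set $\m{U}{n}[\x]$, so $X$ is uniformly locally compact with witnessing constant $\rho_n$. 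Proposition~\ref{UnifLCThm} then yields at once that $(X,d,f)$ has the shadowing property if and only if it has the finite shadowing property.

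I do not expect any genuine obstacle: the argument rests entirely on the elementary remark that $\rho_n$-separation upgrades compactness of the cells of $\m{U}{n}$ to uniform local compactness of $X$. Two side comments. First, this route uses neither completeness of $\mathcal A$ nor uniform continuity of $f$; the latter appears in the statement only to align with the surrounding results. Second, if one instead wished to argue via the inverse-limit description (Theorem~\ref{joelho}), one would need to verify the Mittag-Leffler Condition for $(\hookrightarrow,\m{PO}{}(\m{U}{n}))$, which the compactness hypothesis would indeed supply; but the direct argument above avoids that machinery altogether.
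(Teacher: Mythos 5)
Your proof is correct and follows essentially the same route as the paper: both reduce the statement to uniform local compactness of $(X,d)$ and then invoke Proposition~\ref{UnifLCThm}. The only (harmless) difference is that you verify uniform local compactness directly from the $\rho_n$-separation and the compact cells, whereas the paper passes through the associated ultrametric $u_{\mathcal A}$ and its uniform equivalence with $d$ (Proposition~\ref{berries}).
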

\begin{proof}

Recall that $u_\mathcal A$ denotes the metric associated to the defining sequence $\mathcal A = \{\m{U}{n}\}_{n \in \N}$ as in Equation~(\ref{polvo}), and notice that the existence of a partition $\m{U}{n}$ consisting of compact sets is equivalent to $(X,u_\mathcal A) $ be a uniformly locally compact space. By Proposition~\ref{berries}, $u_\mathcal A$ is uniformly equivalent to $d$. Since the notion of uniformly locally compact space is preserved by uniform equivalence, we conclude that if $(X,d)$ is a metric space with a tame defining sequence, then $(X,d)$ is uniformly locally compact. The result of the proposition now follows from Proposition~\ref{UnifLCThm}.


\end{proof}

The following proposition guarantees that the Mittag-Leffler Condition is satisfied for certain nice defining sequences. 

\begin{Corollary}
\label{mouse}
 Let $(X,d,f)$ be a dynamical system and $\mathcal{A} =\{\m{U}{n}\}_{n \in \N}$ be a  tame defining sequence of $(X,d)$. Furthermore, assume that $f$ is uniformly continuous and satisfies the finite shadowing property.  If $\m{U}{n}$ is a partition consisting of compact sets, for some $n\in \N$, then $(\hookrightarrow,\m{PO}{}(\m{U}{n},f))$ satisfies the Mittag-Leffler Condition. 
\end{Corollary}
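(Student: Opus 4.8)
The plan is to deduce the corollary directly from Proposition~\ref{balance} and part~(\ref{quadril}) of Theorem~\ref{joelho}; essentially no new argument is needed beyond assembling these two. The guiding observation is that the hypothesis ``$\m{U}{n}$ is a partition consisting of compact sets for some $n$'' is exactly what forces the finite shadowing property and the shadowing property to coincide in this setting, and that the Mittag-Leffler Condition is already known to be a consequence of the shadowing property for the system $(\hookrightarrow,\m{PO}{}(\m{U}{n}))$.

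First I would apply Proposition~\ref{balance}. Its hypotheses are met: $\mathcal A$ is a tame defining sequence of $(X,d)$, the map $f$ is uniformly continuous, and by assumption some $\m{U}{n}$ consists of compact sets. Hence, by that proposition, $f$ has the shadowing property if and only if it has the finite shadowing property; since $f$ is assumed to have the finite shadowing property, it therefore has the shadowing property.

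Next, with the shadowing property now available, I would invoke Theorem~\ref{joelho}(\ref{quadril}), whose only standing hypothesis is that $\mathcal A$ be a tame defining sequence of $(X,d)$ --- which holds. That statement asserts that whenever $f$ has the shadowing property, the inverse system $(\hookrightarrow,\m{PO}{}(\m{U}{n},f))$ satisfies the Mittag-Leffler Condition. This is precisely the conclusion of the corollary, so the proof ends here.

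I do not expect a genuine obstacle: the substantive work was done in Propositions~\ref{UnifLCThm} and~\ref{balance} --- which together convert the compact-partition hypothesis into ``finite shadowing $\Rightarrow$ shadowing'', via the uniform local compactness of $(X,u_\mathcal A)$ and the uniform equivalence of $u_\mathcal A$ with $d$ recorded in Proposition~\ref{berries} --- and in Theorem~\ref{joelho}(\ref{quadril}). The only point requiring any care is the routine verification that the hypotheses of those two results are all implied by the hypotheses of the corollary; in particular, completeness of $\mathcal A$ is not needed for either ingredient, so its absence from the statement is correct.
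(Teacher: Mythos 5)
Your proposal is correct and follows exactly the paper's own argument: apply Proposition~\ref{balance} to upgrade finite shadowing to shadowing under the compact-partition hypothesis, then invoke Theorem~\ref{joelho}(\ref{quadril}) to obtain the Mittag-Leffler Condition. Your remark that completeness of $\mathcal{A}$ is not needed for either ingredient is also accurate.
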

\begin{proof} 

By Proposition~\ref{balance} we have that $f$ satisfies the shadowing property. Hence, by item~\ref{quadril} in Theorem~\ref{joelho}, we conclude that $(\hookrightarrow,\m{PO}{}(\m{U}{n},f))$ satisfies the Mittag-Leffler Condition.


\end{proof}

\section{Applications}\label{aplica}

\subsection{Shadowing in ultrametric spaces}

In this subsection, as applications of the techniques and results developed so far, we prove that various classes of maps in ultrametric spaces have the shadowing property.

\begin{Definition}\label{feijao}

We call $f: X \rightarrow X$ an {\em eventually Lipschitz $L$ map} if there is $ \varepsilon >0$ such that for all $x,y \in X$ with $d(x,y) < \varepsilon$ we have that $d(f(x), f(y)) \le  L \cdot d(x,y)$.

We call $f: X \rightarrow X$ an {\em eventual similarity} if there is $\varepsilon >0$  and $s >0$ such that for all $x,y \in X$ with $d(x,y) < \varepsilon$ we have that $d(f(x), f(y)) = s \cdot d(x,y)$.
\end{Definition}

We start by showing the finite shadowing property for maps that are $1$-Lipschitz or such that the inverse is $1$-Lipschitz.

\begin{Theorem}\label{Doing good} Suppose that $X$ is an ultrametric space and $f:X\rightarrow X$ is continuous. Then, under any one of the following hypothesis the map $f$ has the finite shadowing property.
\begin{enumerate}
    \item $f$ is eventually $1$-Lipschitz;
    \item $f$ is invertible and $f^{-1}$ is eventually $1$-Lipschitz;
    \item $f$ is an invertible eventual similarity and $f^{-1}$ is uniformly continuous.
\end{enumerate}
\end{Theorem}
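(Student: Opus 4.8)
The plan is to prove each of the three cases by verifying the combinatorial criterion for the finite shadowing property provided by Proposition~\ref{shadowreform}, applied to the canonical tame defining sequence $\mathcal{U}_n = \{B(\x,\tfrac1n):\x\in X\}$ of the ultrametric space $X$ (Proposition~\ref{cabra}). Concretely, for a fixed $m$ we must produce $n>m$ such that $\m{PO}{}(\m{U}{n}) \hookrightarrow \m{O}{}(\m{U}{m})$; that is, every sequence $(O_i)_{i\in\N}$ of balls of radius $\tfrac1n$ with $f(O_i)\cap O_{i+1}\neq\emptyset$ must, after coarsening each $O_i$ to the ball $V_i$ of radius $\tfrac1m$ containing it, become a genuine orbit-shadowable sequence, meaning for every $k$ there is $\x\in X$ with $f^i(\x)\in V_i$ for $0\le i\le k$. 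The key structural fact I will use repeatedly is property~\ref{triciclo}: in an ultrametric space a ball of radius $r$ equals $B(\y,r)$ for any of its points $\y$, so $f(O_i)\cap O_{i+1}\neq\emptyset$ combined with the eventual-Lipschitz hypothesis lets one control where $f$ sends the \emph{whole} ball $O_i$.

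For case~(i), suppose $f$ is eventually $1$-Lipschitz with constant $\varepsilon_0>0$. Given $m$, choose $n>m$ large enough that $\tfrac2n<\varepsilon_0$ (and $n>m$). Take $(O_i)\in\m{PO}{}(\m{U}{n})$ and pick $\x_0\in O_0$. Inductively, having chosen $\x_i\in O_i$, the condition $f(O_i)\cap O_{i+1}\neq\emptyset$ together with the fact that $f$ is $1$-Lipschitz on balls of radius $<\varepsilon_0$ gives $f(\x_i)\in B(f(\x_i),\tfrac1n)$, and since $\mathrm{diam}(O_i)\le \tfrac1n<\varepsilon_0$ we get $\mathrm{diam}(f(O_i))\le \tfrac1n$; hence $f(O_i)$ is contained in a single ball of radius $\tfrac1n$, forcing $f(O_i)\subseteq O_{i+1}$ (the two balls of radius $\tfrac1n$ intersect, so by~\ref{bike} one contains the other, and the diameter bound settles which). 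Thus I may take $\x_{i+1}=f(\x_i)$, producing an actual orbit $(\x_i)=(f^i(\x_0))$ with $\x_i\in O_i\subseteq V_i$. So in fact $\m{PO}{}(\m{U}{n})=\m{O}{}(\m{U}{n})\hookrightarrow\m{O}{}(\m{U}{m})$, and the finite shadowing property follows.

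For case~(ii), with $g:=f^{-1}$ eventually $1$-Lipschitz, I will run the analogous argument \emph{backwards}. Given $m$, choose $n>m$ with $\tfrac1n$ below the Lipschitz threshold of $g$. Let $(O_i)\in\m{PO}{}(\m{U}{n})$. Now $f(O_i)\cap O_{i+1}\neq\emptyset$ is equivalent to $O_i\cap g(O_{i+1})\neq\emptyset$, and applying the same diameter-contraction reasoning to $g$ shows $g(O_{i+1})\subseteq O_i$ (using~\ref{bike} and~\ref{triciclo} as before). So given $k$, pick any $\x_k\in O_k$ and set $\x_i:=g^{k-i}(\x_k)=f^{-(k-i)}(\x_k)$ for $i<k$; then $\x_i\in O_i$ and $f^i(\x_0)=\x_i\in O_i\subseteq V_i$ for $0\le i\le k$, so $(V_i)\in\m{O}{}(\m{U}{m})$. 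For case~(iii), $f$ an invertible eventual similarity with ratio $s$ and $f^{-1}$ uniformly continuous: if $s\le 1$ then $f$ is eventually $1$-Lipschitz and case~(i) applies; if $s\ge 1$ then $f^{-1}$ is an eventual similarity with ratio $s^{-1}\le 1$, hence eventually $1$-Lipschitz, and case~(ii) applies. (Here one only needs $f^{-1}$ to make sense as a map and to be uniformly continuous so the hypotheses of the ambient theory are met; the eventual-similarity condition on $f$ transfers to $f^{-1}$ directly from the equality $d(f(x),f(y))=s\,d(x,y)$ on the $\varepsilon$-scale, after intersecting with an $f^{-1}$-uniform-continuity neighborhood.)

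The main obstacle I anticipate is bookkeeping the passage between "intersects" and "is contained in", i.e., making the ultrametric inclusions $f(O_i)\subseteq O_{i+1}$ (resp. $g(O_{i+1})\subseteq O_i$) fully rigorous: one must be careful that the Lipschitz bound applies at the scale $\tfrac1n$ of the \emph{fine} partition and that the diameter of the image ball is genuinely $\le \tfrac1n$ so it cannot straddle two distinct balls of radius $\tfrac1n$, which is exactly where properties~\ref{patinete}--\ref{bike} of ultrametric spaces are essential. A secondary point to handle cleanly is the choice of thresholds: the eventual-Lipschitz constant $\varepsilon_0$ is fixed in advance, so $n$ must be chosen with $\tfrac1n<\varepsilon_0$ \emph{and} $n>m$ simultaneously, which is always possible. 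Once these inclusions are established, each case reduces to exhibiting a literal (forward or backward) orbit segment lying inside the prescribed balls, and Proposition~\ref{shadowreform} does the rest.
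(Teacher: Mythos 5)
Your proposal is correct and follows essentially the same route as the paper: reduce via Proposition~\ref{cabra} and Proposition~\ref{shadowreform} to showing $\m{PO}{}(\m{U}{n})\hookrightarrow\m{O}{}(\m{U}{m})$, establish $f(O_i)\subseteq O_{i+1}$ (resp.\ $f^{-1}(O_{i+1})\subseteq O_i$) from the intersection condition, the $1$-Lipschitz bound at the fine scale and the ultrametric ball properties, and handle (iii) by using uniform continuity of $f^{-1}$ to show it is an eventual similarity with ratio $s^{-1}$, reducing to (i) or (ii). The only differences are cosmetic (your choice of $n$ with $\tfrac1n$ below the eventuality threshold versus the paper's $1/m<\varepsilon$, $n=m+1$, and a harmless slip writing $f(\x_i)\in B(f(\x_i),\tfrac1n)$ where $f(O_i)\subseteq B(f(\x_i),\tfrac1n)$ is meant).
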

\begin{proof}
Let $\{\mathcal{U}_n\} $ be the tame defining sequence of Proposition~\ref{cabra}. By Proposition~\ref{shadowreform},  it suffices to show that for every $m\in \N$ there is $n > m$ such that $\m{PO}{}(\m{U}{n},f) \hookrightarrow  \m{O}{}(\m{U}{m},f) $. 

We first prove the result for $f$ eventually $1$-Lipschitz. Let $\varepsilon >0$ be as in the definition of eventual Lipschitz. Let $m \in \N$. As $\m{O}{}(\m{U}{m'},f) \hookrightarrow  \m{O}{}(\m{U}{m},f)$ for $m' \ge m$, we may assume that $m$ is large enough so that $1/m < \varepsilon$. 
Now let $n=m+1$. Take $(O_i)_{i\in\N} \in \m{PO}{}(\m{U}{n},f)$, and let $V_i \in \m{U}{m} $ be such that $O_i\hookrightarrow V_i$. We have to prove that $ \forall \ k \in \N, \ \exists \ \x \in X \textit{ s.t. } f^i(\x) \in V_i, \  0 \le i  \le k  $.
We actually prove more, namely, for any $\x \in O_0$, we have that $f^i(\x)\in V_i $.
We  accomplish this by showing that  $f^i(O_0) \subseteq O_{i+1}$, $i \ge 1$. This, in turn, is accomplished by showing that $f(O_i) \subseteq O_{i+1}$, $i \ge 0$.
Indeed, by definition, $(O_i)_{i\in\N} \in \m{PO}{}(\m{U}{n},f)$ implies that $f(O_i)\cap O_{i+1}\neq \emptyset$. Let $f(\x)\in O_{i+1}$ with $\x\in O_i$. Then for any  $\z\in O_i$ we have that $d(f(\x), f(\z))\leq d(\x,\z) < \frac{1}{n} $, implying that $f(O_i)\subseteq B(f(\x),\frac{1}{n})$. From Property~\ref{triciclo} we have that $B(f(\x),\frac{1}{n}) = O_{i+1}$, yielding that $f(O_i) \subseteq O_{i+1}$. 

Now we consider the case when $f^{-1}$ exists and is eventually $1$-Lipschitz.  We proceed as earlier and let $n=m+1$. Take $(O_i)_{i\in\N} \in \m{PO}{}(\m{U}{n},f)$, and let $V_i \in \m{U}{m} $ be such that $O_i\hookrightarrow V_i$. We have to prove that $ \forall \ k \in \N, \ \exists \ \x \in X \textit{ s.t. } f^i(\x) \in V_i, \  0 \le i  \le k  $. This time we observe that for all $i \ge 0 $, we have that $f^{-1}(O_{i+1}) \subseteq O_i$.  Indeed, as $(O_i)_{i\in\N} \in \m{PO}{}(\m{U}{n},f)$ we have that there is $ \x \in  O_{i+1} $ with $f^{-1}(\x) \in (O_{i})$. Then for any  $\z\in O_{i+1}$ we have that $d(f^{-1}(\x), f^{-1}(\z))\leq d(\x,\z) < \frac{1}{n} $, implying that $f^{-1}(O_{i+1})\subseteq B(f^{-1}(\x),\frac{1}{n})$. From Property~\ref{triciclo} we have that $B(f^{-1}(\x),\frac{1}{n}) = O_{i}$, and hence $f^{-1}(O_{i+1}) \subseteq O_i$ for all $i \ge 0$. Now this fact and induction implies that for all $k \ge 1$,
\[ f^{-k}(O_k) \subseteq f^{k-1}(O_{k-1}) \subseteq \ldots \subset f^{-1}(O_1) \subseteq O_0.
\]
Let $\x \in f^{-k}(O_k)$. Then, for all $0\le i \le k$, we have that $f^i(\x) \in O_i \subseteq V_i$, completing the proof. 

Let us now show (iii). Let $f$ be as in the hypothesis, with the similarity constant $s$. The case of $s \le 1$ is covered in part (i). Hence let us assume that $s>1$. Let $\varepsilon$ be as in the definition of eventual similarity. Let $\delta >0$ witness the uniform continuity of $f^{-1}$ for $\varepsilon$. We will show that $f^{-1}$ is an eventually $1$-Lipschitz contraction with eventuality constant $\delta$. By the choice of $\delta$, for all $\x, \y \in X$ with $d(\x, \y)< \delta$, we have that $d(f^{-1}(\x),f^{-1}(\y)) < \varepsilon$. Hence, as $f$ is an eventual similarity, we have that 
\[ d(f(f^{-1}(\x)),f(f^{-1}(\y))) =  s \cdot d(f^{-1}(\x),f^{-1}(\y)),\] 
or equivalently,
\[  d(f^{-1}(\x),f^{-1}(\y)) = \frac{1}{s} \cdot d(\x,\y).\] 
We have just shown that $f^{-1}$ is an eventually $1$-Lipschitz. By (ii), we have that $f$ has the finite shadowing property.

\end{proof}

We now explore when the shadowing property holds for the above classes.

\begin{Corollary}
\label{camarao}
Let $(X,d)$ be a  complete ultrametric space. If $f:X \rightarrow X$ is an eventually  $1$-Lipschitz map, then $f$ has the shadowing property. 
\end{Corollary}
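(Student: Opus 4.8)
The plan is to combine Theorem~\ref{Doing good}(i) with the Mittag-Leffler machinery already developed, using the fact that a complete ultrametric space carries a complete tame defining sequence. First I would invoke Proposition~\ref{cabra} to fix the tame defining sequence $\mathcal A = \{\m{U}{n}\}_{n\in\N}$, where $\m{U}{n} = \{B(\x,\frac{1}{n}):\x\in X\}$. Since $(X,d)$ is a complete ultrametric space, the associated ultrametric $u_\mathcal A$ is uniformly equivalent to $d$ (Corollary~\ref{platano}, as used in the proof of Proposition~\ref{cabra}), hence complete, so $\mathcal A$ is a \emph{complete} tame defining sequence by Proposition~\ref{toalha}. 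By Theorem~\ref{Doing good}(i), the eventually $1$-Lipschitz map $f$ has the finite shadowing property.

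The crux is then to verify that $(\hookrightarrow,\m{PO}{}(\m{U}{n},f))$ satisfies the Mittag-Leffler Condition, so that Theorem~\ref{joelho}(ii) applies. Here I would revisit the proof of Theorem~\ref{Doing good}(i): the argument there shows something stronger than finite shadowing, namely that for $n = m+1$ and any $(O_i)_{i\in\N}\in\m{PO}{}(\m{U}{n},f)$ one has $f(O_i)\subseteq O_{i+1}$ for all $i$, and consequently every $\x\in O_0$ has its entire genuine orbit passing through the carriers $(V_i)$ with $O_i\hookrightarrow V_i$. In particular, the orbit of any $\x\in O_0$ itself determines an element of $\m{PO}{}(\m{U}{i})$ for \emph{every} $i$ (not merely $i = m+1$) refining $(V_i)$. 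This is precisely the kind of statement needed for Mittag-Leffler: given $N$, taking $k = N+1$ and any $i \ge k$, one checks that $\hookrightarrow$ from $\m{PO}{}(\m{U}{i})$ into $\m{PO}{}(\m{U}{N})$ has the same image as the one from $\m{PO}{}(\m{U}{k})$, because any pseudo-orbit tracked coarsely at level $N$ can be realized by a genuine orbit that is automatically coherent at all finer levels. I would therefore argue that the $1$-Lipschitz property makes the inclusion maps between the $\m{PO}{}(\m{U}{n})$ essentially stabilize on images.

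Alternatively --- and this is the cleaner route I would actually take --- observe that Theorem~\ref{joelho}(ii) requires $f$ uniformly continuous, which an eventually $1$-Lipschitz map on an ultrametric space is (it is $1$-Lipschitz at small scales and the remaining scales are handled automatically since, for $d(x,y)\ge\varepsilon$, any bound works after shrinking). So I would first note $f$ is uniformly continuous, then establish Mittag-Leffler directly as above, and conclude via Theorem~\ref{joelho}(ii) that $f$ has the shadowing property.

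The main obstacle I anticipate is the Mittag-Leffler verification: one must be careful that the image of $\m{PO}{}(\m{U}{i})$ inside $\m{PO}{}(\m{U}{N})$ really does stabilize, rather than just establishing surjectivity onto $\m{O}{}(\m{U}{N})$. The $1$-Lipschitz property is exactly what forces a $\m{PO}{}$-sequence at a fine level to behave like a genuine orbit (via $f(O_i)\subseteq O_{i+1}$), and it is this rigidity --- absent for general maps --- that should make the images coincide. If making this precise proves delicate, the fallback is to route through Corollary~\ref{CorMain} after directly exhibiting the inverse limit description, but the Theorem~\ref{joelho}(ii) path is the intended one.
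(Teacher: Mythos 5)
Your proposal is correct and follows essentially the same route as the paper: finite shadowing via Theorem~\ref{Doing good}(i), then a direct verification of the Mittag-Leffler Condition for $(\hookrightarrow,\m{PO}{}(\m{U}{n}))$ using the fact that the eventual $1$-Lipschitz property forces a genuine orbit through the pseudo-orbit carriers (the paper runs the induction $f^{M+1}(\x_0)\in V_{M+1}$ directly, you reuse $f(O_i)\subseteq O_{i+1}$ from Theorem~\ref{Doing good}), and conclusion by Theorem~\ref{joelho}(ii). The only detail to make explicit is that this stabilization argument requires the finer level to have ball diameters below the eventuality constant $\varepsilon$, so one verifies Mittag-Leffler only for $N$ large (or takes $k$ large rather than $k=N+1$), exactly as the paper does by restricting to $N>j$ with $1/j<\varepsilon$.
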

\begin{proof} In light of Theorem~\ref{Doing good}, we already have that $f$ has the finite shadowing property. In order to complete the proof, by Theorem~\ref{joelho}, it suffices to show that $(\hookrightarrow,\m{PO}{}(\m{U}{n}))$
satisfies the Mittag-Leffler Condition,  where $\{\mathcal{U}_n\} $ is the tame defining sequence of Proposition~\ref{cabra}.

Let $\varepsilon >0$ witness the fact that $f$ is eventually Lipschitz. Let $j \in \N$ be such that $\frac{1}{j} < \varepsilon$.
It will suffice to show the Mittag-Leffler Condition for $N > j$.
Let $k = N+1$ and let $i\geq k$. To verify the Mittag-Leffler Condition, it suffices to show that  for $(O_l)\in \m{PO}{}(\m{U}{k})$ with $(O_l) \hookrightarrow  (V_l)$ and $V_l \in \m{U}{N}$, there is $(W_l) \in  \m{PO}{}(\m{U}{i})$ such that  $(W_l) \hookrightarrow  (V_l)$. To this end, let $(\x_l)$ be a sequence such that $\x_l \in O_l$ and $f(\x_l)\in O_{l+1}$, $l\in \N$. 

We will show that $f^l(\x_0)\in V_l$ for all $l\in \N$. By hypothesis, $\x_l \in O_l \subseteq V_l$ and $f(\x_l)\in O_{l+1} \subseteq V_{l+1}$,  for all $l\in \N$. Clearly, $\x_0 \in V_0$. Suppose that $f^l(\x_0) \in V_l$ for every $l\leq M$. Then, as $f^M(\x_0), \x_M \in V_M \in \m{U}{N}$ and $f$ is $1$-Lipschitz, we have that 
\[d(f^{M+1}(\x_0), f(\x_M) ) \leq  d(f^M(\x_0), \x_M) <\frac{1}{N},\]
verifying that $f^{M+1}(\x_0)\in V_{M+1}$ as desired.

Now, let $W_l$ be the element in $\m{PO}{}(\m{U}{i})$ such that $f^l(\x_0) \in W_l$. Clearly, $(W_l) \hookrightarrow  (V_l)$.
\end{proof}

\begin{Corollary}\label{almoco}
Suppose that $(X,d)$ is an ultrametric space with the additional property that for some $\varepsilon >0$, all balls of radius  $\varepsilon$ are compact.  Let $f:X\rightarrow X$ be an invertible uniformly continuous map.
\begin{enumerate}
    \item If $f^{-1}$ is an eventually $1$-Lipschitz map, then $f$ has the shadowing property.
   \item If $f^{-1}$ is uniformly continuous and $f$ is an eventual similarity, then $f$ has the shadowing property. 
\end{enumerate} 
\end{Corollary}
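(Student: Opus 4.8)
The plan is to reduce both statements to the finite shadowing property, which was already established in Theorem~\ref{Doing good}, by exploiting that the compactness hypothesis makes $X$ uniformly locally compact.

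First I would record that $(X,d)$ is uniformly locally compact: by hypothesis there is $\varepsilon>0$ such that every ball $B(\x,\varepsilon)$ is compact, which is precisely the definition. (Alternatively, working with the tame defining sequence $\mathcal{U}_n=\{B(\x,\tfrac1n):\x\in X\}$ of Proposition~\ref{cabra}, for any $n$ with $\tfrac1n\le\varepsilon$ each element of $\mathcal{U}_n$ is clopen by Property~\ref{patinete}, hence a closed subset of the compact ball $B(\x,\varepsilon)$, so $\mathcal{U}_n$ consists of compact sets; this puts us in the setting of Proposition~\ref{balance}, via the uniform equivalence of $d$ and $u_\mathcal{A}$ from Proposition~\ref{berries}.)

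Next, for part (1): $f$ is invertible with $f^{-1}$ eventually $1$-Lipschitz, so Theorem~\ref{Doing good}(ii) yields that $f$ has the finite shadowing property. For part (2): $f$ is an invertible eventual similarity and $f^{-1}$ is uniformly continuous, so Theorem~\ref{Doing good}(iii) yields that $f$ has the finite shadowing property. In either case, since $X$ is uniformly locally compact, Proposition~\ref{UnifLCThm} (equivalently, Proposition~\ref{balance}, using that $f$ is uniformly continuous) upgrades finite shadowing to the shadowing property, which is the claim.

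I do not expect a genuine obstacle here: all the substantive work is contained in Theorem~\ref{Doing good} and in the locally-compact equivalence of Proposition~\ref{UnifLCThm}/Proposition~\ref{balance}. The only point requiring a moment's care is that the compactness assumption is stated only for balls of one fixed radius $\varepsilon$; the ultrametric structure (nesting of balls together with Property~\ref{patinete}) is exactly what lets one pass freely between $B(\x,\varepsilon)$ and the finer partitions $\mathcal{U}_n$ of the tame defining sequence, should one prefer to route through Proposition~\ref{balance} rather than invoke uniform local compactness directly.
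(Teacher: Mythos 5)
Your proposal is correct and follows essentially the same route as the paper: Theorem~\ref{Doing good}(ii)--(iii) gives finite shadowing, and the compact-balls hypothesis makes $X$ uniformly locally compact so that Proposition~\ref{UnifLCThm} (or Proposition~\ref{balance}) upgrades this to the shadowing property. Your extra remark about passing to the finer partitions $\mathcal{U}_n$ of the tame defining sequence is a harmless elaboration of the same argument.
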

\begin{proof}
In both cases, by Theorem~\ref{Doing good}, we have that $f$ has the finite shadowing property. Since $(X,d)$ is a uniformly locally compact ultrametric space, by Proposition~\ref{UnifLCThm} (or by Corollary~\ref{balance}) we conclude that $f$ has the shadowing property.


\end{proof}

\begin{Corollary}\label{scalingmaps}
Let $X$ be a compact ultrametric space and $f:X \rightarrow X$ be an eventual similarity. Then $f$ has the shadowing property.
\end{Corollary}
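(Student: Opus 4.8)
The plan is to reduce Corollary~\ref{scalingmaps} to the already-established results by distinguishing cases on the similarity constant $s$. An eventual similarity on a compact ultrametric space is automatically a homeomorphism: eventual similarity with constant $s$ forces $f$ to be locally injective with a uniformly continuous inverse on small balls, and compactness upgrades this to global invertibility (the image $f(X)$ is compact hence closed, and a standard connectedness-free argument using the clopen ball structure shows $f$ is surjective). So without loss of generality $f$ is invertible; moreover on a compact metric space every continuous map, and in particular $f$ and $f^{-1}$, is automatically uniformly continuous. Thus the hypotheses of Corollary~\ref{almoco} are within reach once invertibility is secured.

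First I would treat the case $s \le 1$. Here $f$ is eventually $1$-Lipschitz directly from the definition of eventual similarity (with the same eventuality constant $\varepsilon$), so Corollary~\ref{camarao} applies verbatim and $f$ has the shadowing property. Next, for the case $s > 1$, I would observe that $X$ compact ultrametric means, in particular, that for any $\varepsilon > 0$ all balls of radius $\varepsilon$ are compact, so $X$ satisfies the standing hypothesis of Corollary~\ref{almoco}. Since $f$ is invertible and both $f$ and $f^{-1}$ are uniformly continuous (compactness), part~(ii) of Corollary~\ref{almoco} immediately gives that $f$ has the shadowing property. Combining the two cases completes the proof.

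An alternative, more uniform route avoiding the case split: since $X$ is compact ultrametric, it is a uniformly locally compact ultrametric space, and $f$ is uniformly continuous; by Theorem~\ref{Doing good}(iii) (using that $f^{-1}$ is uniformly continuous by compactness) $f$ has the finite shadowing property, and then Proposition~\ref{UnifLCThm} (equivalently Proposition~\ref{balance}) upgrades finite shadowing to shadowing. I would likely present this version, as it is cleaner and does not require separately invoking Corollaries~\ref{camarao} and~\ref{almoco}.

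The main obstacle is the preliminary step: verifying that an eventual similarity on a compact ultrametric space is invertible, so that Theorem~\ref{Doing good}(iii) or Corollary~\ref{almoco} is applicable. Local injectivity from the similarity relation is easy, but promoting it to global injectivity and surjectivity needs the clopen-partition structure of the ultrametric together with compactness; I expect this to be the only genuinely non-routine point, everything after it being a direct citation of the machinery already built.
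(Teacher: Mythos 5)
Your reduction breaks down at exactly the point you flagged as the crux: an eventual similarity on a compact ultrametric space need \emph{not} be invertible, and the sketch you give (local injectivity plus the clopen ball structure plus compactness) cannot be completed. Concretely, the one-sided full shift $\sigma$ on $\{0,1\}^\N$ with the ultrametric $d(x,y)=2^{-i}$, where $i$ is the first index of disagreement, is an eventual similarity in the sense of Definition~\ref{feijao} (take $\varepsilon=1$, $s=2$: if $x_0=y_0$, the first disagreement moves one step to the left under $\sigma$), yet it is two-to-one; non-surjective examples are equally easy, since any map of a finite ultrametric space is vacuously an eventual similarity once $\varepsilon$ is smaller than the minimal positive distance. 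So the step ``without loss of generality $f$ is invertible'' is false, and with it both of your routes for the case $s>1$ collapse: Theorem~\ref{Doing good}(iii) and Corollary~\ref{almoco}(ii) genuinely require $f^{-1}$ to exist, and ``$f^{-1}$ is uniformly continuous by compactness'' presupposes precisely what is missing. Only your $s\le 1$ case (eventually $1$-Lipschitz, Corollary~\ref{camarao}) is complete as written.

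Worse, the gap cannot be closed by a cleverer argument, because the expanding non-invertible case is genuinely problematic: take a strictly sofic subshift $X\subseteq\{0,1\}^\N$ (for instance the even shift, with forbidden words $10^{2k+1}1$) equipped with the same metric $d(x,y)=2^{-i}$. This is a compact ultrametric space and $\sigma|_X$ is an eventual similarity with $s=2$ by the same computation; but $X$ is not of finite order (over a finite alphabet, finite order coincides with finite type), so by Proposition~\ref{shadow-finiteshadow-porder} it fails the shadowing property with respect to $d_\Pi$, and hence, shadowing being invariant under uniform equivalence of metrics, with respect to $d$ as well. Thus the statement is only available under the additional hypothesis that $f$ is invertible, which is how the paper actually obtains it: Corollary~\ref{scalingmaps} is read off from Corollary~\ref{almoco}(ii), with compactness supplying uniform continuity of $f$ and $f^{-1}$ and compactness of all balls, and no case split on $s$ is needed. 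Your overall strategy coincides with that derivation; the fatal step is the attempt to prove invertibility for free.
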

The next example shows that  Theorem~\ref{Doing good} is not valid for general Lipschitz functions.

\begin{Example}\label{exqueijo}
Let $A$ be a countable alphabet and consider the metric in $A^\N$ given in Equation~(\ref{productmetric}). This is an ultrametric space.  Let $X\subseteq A^{\N}$ be a shift space which is not of finite order. Then the shift map on $X$ is a $2$-Lipschitz map which, by Proposition~\ref{shadow-finiteshadow-porder}, does not have the shadowing property.
\end{Example}
Motivated by the question (1) left at the end of the paper \cite{BCA}, we finish this subsection with a result regarding two-sided shadowing in ultrametric spaces. The definition of two-sided shadowing is the same as Definition~\ref{mignon}, with $I=\Z$ in Definition~\ref{lasagna}.

\begin{Proposition}\label{pastel}
Let $(X,d)$ be an ultrametric space and $f:X\rightarrow X$ be a surjective isometry. Then $(X,d,f)$ has the two-sided shadowing property.
\end{Proposition}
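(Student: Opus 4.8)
The plan is to show directly that a surjective isometry of an ultrametric space has the two-sided shadowing property by exploiting the fact that, for isometries, $\delta$-pseudo-orbits rigidly propagate through the clopen partition into balls of a fixed radius, both forwards and backwards. Fix $\varepsilon > 0$ and choose $n \in \N_+$ with $\frac{1}{n} < \varepsilon$. Let $\delta = \frac{1}{n}$, and consider the partition of $X$ into balls of radius $\delta$; by Property~\ref{patinete} these are clopen, and by Property~\ref{hoverboard} distinct such balls are at distance $\geq \delta$. Given a two-sided $\delta$-pseudo-orbit $(\x_k)_{k \in \Z}$, let $O_k := B(\x_k, \delta)$ be the ball containing $\x_k$.

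First I would establish that $f(O_k) = O_{k+1}$ for every $k \in \Z$. Since $f$ is an isometry, $f(O_k) = f(B(\x_k,\delta)) = B(f(\x_k), \delta)$, and because $f$ is surjective this is exactly the ball of radius $\delta$ about $f(\x_k)$. Now $d(f(\x_k), \x_{k+1}) < \delta$, so by Property~\ref{triciclo} we get $B(f(\x_k), \delta) = B(\x_{k+1}, \delta) = O_{k+1}$. Hence $f(O_k) = O_{k+1}$ for all $k \in \Z$, and consequently $f^j(O_0) = O_j$ for all $j \in \Z$ (using that $f$ is invertible, being a surjective isometry of a metric space, so the relation extends to negative indices as well).

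Next I would produce the shadowing point. Fix any $\z \in O_0$. Then for every $k \in \Z$ we have $f^k(\z) \in f^k(O_0) = O_k = B(\x_k, \delta)$, so $d(f^k(\z), \x_k) < \delta < \varepsilon$. Thus $\z$ two-sidedly $\varepsilon$-shadows $(\x_k)_{k \in \Z}$, which is precisely the two-sided shadowing property with $\delta$ depending only on $\varepsilon$.

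I expect the only genuine subtlety — and the step to state carefully — is the backward propagation: one must note that a surjective isometry of a metric space is automatically bijective (injectivity from the isometry property, surjectivity by hypothesis), so $f^{-1}$ exists and is itself a surjective isometry, and therefore the identity $f(O_k) = O_{k+1}$ can be rewritten as $f^{-1}(O_{k+1}) = O_k$, giving $f^j(O_0) = O_j$ for all $j \in \Z$ and not merely $j \geq 0$. Everything else is a direct application of the ultrametric ball identities~\ref{patinete}--\ref{hoverboard}; no completeness or compactness hypothesis is needed because the shadowing point $\z$ can be taken to be any point of the already-given orbit's $0$-th ball, so no limiting argument is required.
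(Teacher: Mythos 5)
Your argument is correct: for a surjective isometry one indeed has $f(B(\x_k,\delta))=B(f(\x_k),\delta)$ (the inclusion uses the isometry, the reverse inclusion uses surjectivity), Property~\ref{triciclo} then gives $f(O_k)=O_{k+1}$ for all $k\in\Z$, and bijectivity (injectivity being automatic for isometries) lets you run this backwards, so every point of $O_0$, in particular $\x_0$, $\varepsilon$-shadows the pseudo-orbit. This is a genuinely different presentation from the paper's proof of Proposition~\ref{pastel}, which never introduces the ball partition: it simply takes $\x_0$ as the candidate shadowing point and proves $d(f^n(\x_0),\x_n)<\delta$ by induction, using the ultrametric inequality together with $d(f^n(\x_0),f(\x_{n-1}))=d(f^{n-1}(\x_0),\x_{n-1})$, and an analogous induction with $f^{-1}$ (also an isometry) for negative indices. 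Your route is in the spirit of the paper's proof of Theorem~\ref{Doing good}, where pseudo-orbits are propagated through the clopen partition into $\delta$-balls via Property~\ref{triciclo}; it buys a slightly sharper conclusion (any point of $B(\x_0,\delta)$ shadows, since the true orbit of such a point visits exactly the balls $O_k$) and isolates clearly where surjectivity enters, while the paper's induction is more economical, using nothing beyond the ultrametric inequality itself. One cosmetic remark: you invoke Property~\ref{hoverboard}, but your argument never actually uses the separation of distinct balls — Property~\ref{triciclo} is all that is needed.
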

\begin{proof}
Given $\varepsilon>0$, let $0< \delta <\varepsilon$ and $(\x_n)_{n\in \Z}$ be a $\delta$-pseudo-orbit. Notice that for $n\in \N$ we have:
\[ \begin{array}{ll} d(f^n(\x_0), \x_n) & \leq  \max\{d(f^n(\x_0),f(\x_{n-1})),  d(f(\x_{n-1}),\x_n)\}\\ \\
 & = \max\{ d(f^{n-1}(\x_0),\x_{n-1}),  d(f(\x_{n-1}),\x_n)\}. \end{array}
\]
Since $d(f(\x_0),\x_1)<\delta$ the above and induction imply that $\x_0$  $\varepsilon$-shadows $(\x_n)_{n\in\N}$.
Now notice that we also have, for $n\in \N$, that
\[ \begin{array}{ll} d(f^{-n}(\x_0), \x_{-n}) & \leq  \max\{d(f^{-n}(\x_0),f^{-1}(\x_{-n+1})),  d(f^{-1}(\x_{-n+1}),\x_{-n})\}\\ \\
 & = \max\{ d(f^{-n+1}(\x_0),\x_{-n+1}),  d(f^{-1}(\x_{-n+1}),\x_{-n})\}. \end{array}
\]
Since $d(f(\x_i),\x_{i+1})<\delta$ implies that $d(\x_i,f^{-1}(\x_{i+1}))<\delta$ the above, and another induction, proves that $(\x_n)_{n\in \Z}$ is two-sided $\varepsilon$-shadowed by $\x_0$.
\end{proof}

\begin{Remark}
It follows from the above proposition that the identity map in an ultrametric space always has the two-sided shadowing property. This is in contrast with the behavior of the identity map in a nontrivial connected space, where it does not have even the finite shadowing property.
\end{Remark}

\subsection{Shadowing in $P$-adic dynamics}

In the recent paper \cite{BCA} shadowing and structural stability in $p$-adics dynamics is studied. Below we show that some of their main results concerning shadowing follow from our general results in ultrametric spaces.  

Before we state the results, recall that a map $f: \mathbb{Z}_p \to \mathbb{Z}_p$ is $(p^{-k}, p^m)$ locally scaling ( $1 \leq m \leq k$ integers) if for all $x, y \in \mathbb{Z}_p$ with $\| x- y \|_p \leq p^{-k}$, we have that $\| f(x)- f(y) \|_p = p^{m} \| x- y \|_p$, see \cite{BCA, KLPS}.

\begin{Corollary}\label{Cortakethat}
The following hold.

\begin{enumerate}
    \item \cite[Theorem~1]{BCA} \label{porco}
If $f: \mathbb{Z}_p \to \mathbb{Z}_p$ is a $(p^{-k},p^{m})$ locally scaling function, where $1 \leq m \leq k $ are integers, then $f$ has the shadowing property.
 \item  \cite[Proposition~18]{BCA} \label{costela}
If $f: \Z_p \to \Z_p$ is a $1$-Lipschitz map, then $f$ has the shadowing property.
\item  \cite[Remark~19]{BCA} \label{farinha}
If  $f: \mathbb{Q}_p \to \mathbb{Q}_p$ is a  $1$-Lipschitz map, then $f$ has the shadowing property.
\end{enumerate}
\end{Corollary}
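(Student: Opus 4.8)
The plan is to obtain all three items as quick consequences of the ultrametric shadowing results established in Section~\ref{aplica}, once we recall from Definition~\ref{normpadic} that $\Z_p$ and $\Q_p$ are complete ultrametric spaces and that $\Z_p$ is, in addition, compact.

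For item~\ref{porco} I would first translate the hypothesis into the language of Definition~\ref{feijao}. Since the distances realized in $\Z_p$ form the set $\{0\}\cup\{p^{-j}:j\in\N\}$ and $k\geq 1$, the condition $\|x-y\|_p\leq p^{-k}$ is equivalent to $d(x,y)<p^{-k+1}$; hence a $(p^{-k},p^m)$ locally scaling map satisfies, with $\varepsilon:=p^{-k+1}$ and $s:=p^m$, the requirement that $d(x,y)<\varepsilon$ forces $d(f(x),f(y))=s\cdot d(x,y)$, i.e.\ it is an eventual similarity (and, in particular, uniformly continuous). As $\Z_p$ is a compact ultrametric space, Corollary~\ref{scalingmaps} then gives the shadowing property. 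I want to emphasize that one should \emph{not} route this through Theorem~\ref{Doing good}(iii) or Corollary~\ref{almoco}: a locally scaling map with $m\geq 1$ is in general not injective, since each ball of radius $p^{-k}$ is expanded onto a ball of radius $p^{-k+m}$ and $p^m$ distinct such balls are sent to the same image. Thus the non-invertible form of the similarity result, Corollary~\ref{scalingmaps}, is precisely the tool one needs here.

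For items~\ref{costela} and~\ref{farinha} the argument is even shorter. A $1$-Lipschitz map is continuous and is eventually $1$-Lipschitz in the sense of Definition~\ref{feijao} (take $\varepsilon=1$, for instance). Since $\Z_p$ and $\Q_p$ are complete ultrametric spaces, Corollary~\ref{camarao} applies directly in both cases and produces the shadowing property. The point worth highlighting is that Corollary~\ref{camarao} requires only completeness, so it covers $\Q_p$ even though $\Q_p$ is merely locally compact; alternatively, since every ball of radius $1$ in $\Q_p$ is a translate of the compact set $\Z_p$, one could instead combine Theorem~\ref{Doing good}(i) with Proposition~\ref{UnifLCThm}.

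I do not expect a serious obstacle, as the substance of the argument lies entirely in the earlier results. The only points requiring care are the elementary verification that a $(p^{-k},p^m)$ locally scaling map is a special case of an eventual similarity — a one-line computation using the discrete value set of $\|\cdot\|_p$ — and the observation that invertibility genuinely fails in item~\ref{porco}, which forces the use of Corollary~\ref{scalingmaps} rather than Corollary~\ref{almoco}.
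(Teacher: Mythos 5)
Your proposal is correct and follows essentially the same route as the paper: item (i) via Corollary~\ref{scalingmaps} using compactness of $\Z_p$, and items (ii)--(iii) via Corollary~\ref{camarao} using completeness of $\Z_p$ and $\Q_p$. Your added verification that a $(p^{-k},p^m)$ locally scaling map is an eventual similarity, and your remark that non-invertibility rules out Corollary~\ref{almoco}, are sensible elaborations of details the paper leaves implicit.
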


\begin{proof}
Since $\Z_p$ is compact, item \ref{porco} follows from Corollary~\ref{scalingmaps}. Items \ref{costela} and \ref{farinha} follow from Corollary~\ref{camarao} and the fact that $\mathbb{Q}_p$ is an ultrametric space with compact balls.

\end{proof}

Finally, at the end of \cite{BCA} the following question is left open:

{\bf Question:}  Let $f : \mathbb{Q}_p \to \mathbb{Q}_p$ be a homeomorphism. Assuming that  $f$ is $1$-Lipschitz, can $f$ be (two-sided) shadowing or Lipschitz structurally stable?

The (affirmative) answer to the shadowing part of this question follows directly from our Proposition~\ref{pastel}.\\

{\em {\bf{Acknowledgement}:} We would like to kindly thank Jonathan Meddaugh for pointing out an oversight in the earlier version of the article.

Many of the proofs in the current version were simplified and clarified by the deep insights of the anonymous referee. We are indebted to him/her  for significantly improving the presentation of our article.}

\bibliographystyle{abbrv}
\bibliography{shadowing}

\begin{thebibliography}{10}

\bibitem{AndersonPutnam}
J.~Anderson and I.~F. Putnam.
\newblock {Topological invariants for substitution tilings and their
  C*-algebras}.
\newblock {\em Ergodic Th. and Dynam. Sys.}, 18:509--537, 1998.

\bibitem{BCA}
J.~Bastos, D.~Caprio, and A.~Messaoudi.
\newblock {Shadowing and Stability in p-adic dynamics}.
\newblock {\em arXiv:2001.02737 [math.DS]}, Jan 2020.

\bibitem{CamerloMarconeMottoRos}
R.~Camerlo, A.~Marcone, and L.~Motto~Ros.
\newblock {On isometry and isometric embeddability between ultrametric {P}olish
  spaces}.
\newblock {\em Adv. Math.}, 329:1231--1284, 2018.

\bibitem{coven}
E.~M. Coven, I.~Kan, and J.~A. Yorke.
\newblock Pseudo-orbit shadowing in the family of tent maps.
\newblock {\em Trans. Amer. Math. Soc.}, 308(1):227--241, 1988.

\bibitem{DAI20175521}
X.~Dai and X.~Tang.
\newblock {Devaney chaos, Li–Yorke chaos, and multi-dimensional Li–Yorke
  chaos for topological dynamics}.
\newblock {\em J. of Differential Equations}, 263(9):5521 -- 5553, 2017.

\bibitem{Danilenko}
A.~I. Danilenko.
\newblock {Strong orbit equivalence of locally compact Cantor minimal systems}.
\newblock {\em Internat. J. Math}, 12(01):113--123, 2001.

\bibitem{DasLeeEtal}
T.~Das, K.~Lee, D.~Richeson, and J.~Wiseman.
\newblock {Spectral decomposition for topologically {A}nosov homeomorphisms on
  noncompact and non-metrizable spaces}.
\newblock {\em Topology Appl.}, 160(1):149--158, 2013.

\bibitem{Duncan}
J.~Duncan and S.~Solecki.
\newblock {Recovering Baire one functions on ultrametric spaces}.
\newblock {\em Bull. Lond. Math. Soc.}, 41(4):747--756, 07 2009.

\bibitem{10.1112/blms/bdp003}
F.~Durand and F.~Paccaut.
\newblock {Minimal polynomial dynamics on the set of 3-adic integers}.
\newblock {\em Bulletin of the London Mathematical Society}, 41(2):302--314, 02
  2009.

\bibitem{fanfan}
A.~Fan, S.~Fan, L.~Liao, and Y.~Wang.
\newblock {On minimal decomposition of p-adic homographic dynamical systems}.
\newblock {\em Adv. Math.}, 257:92--135, 2014.

\bibitem{FAN2007666}
A.-H. Fan, M.-T. Li, J.-Y. Yao, and D.~Zhou.
\newblock {Strict ergodicity of affine p-adic dynamical systems on
  $\mathbb{Z}_p$}.
\newblock {\em Advances in Mathematics}, 214(2):666--700, 2007.

\bibitem{vesnik}
F.~George.
\newblock {Locally contractive maps on poerfect Polish ultrametric spaces}.
\newblock {\em Mat. Vesnik}, 68(4):1233--240, 2016.

\bibitem{CGW}
D.~Gon\c{c}alves.
\newblock {Topological Full groups of ultragraph groupoids as an isormorphism
  invariant}.
\newblock {\em M\"{u}nster J. Math.}, to appear, 2021.

\bibitem{RamirezGonc}
D.~Gon\c{c}alves and M.~Ramirez-Solano.
\newblock {On the K-theory of C*-algebras associated to substitution tilings}.
\newblock {\em to appear at Dissertationes Math.}, 2020.

\bibitem{MR3938320}
D.~Gon\c{c}alves and D.~Royer.
\newblock {Infinite alphabet edge shift spaces via ultragraphs and their {$\rm
  C^*$}-algebras}.
\newblock {\em Int. Math. Res. Not. IMRN}, 2019(7):2177--2203, 2019.

\bibitem{brunodaniel}
D.~Gon\c{c}alves and B.~B. Uggioni.
\newblock {Li-Yorke Chaos for ultragraph shift spaces}.
\newblock {\em Discrete Contin. Dyn. Syst.}, 40:2347, 2020.

\bibitem{GONCALVES2020102807}
D.~Gon\c{c}alves and B.~B. Uggioni.
\newblock {Ultragraph shift spaces and chaos}.
\newblock {\em Bull. Sci. Math}, 158:102807, 2020.

\bibitem{GMinventiones}
C.~Good and J.~Meddaugh.
\newblock {Shifts of finite type as fundamental objects in the theory of
  shadowing}.
\newblock {\em Invent. math.}, 220:715--736, 2020.

\bibitem{goodoprocha}
C.~Good, P.~Oprocha, and M.~Puljiz.
\newblock {Shadowing, asymptotic shadowing and s-limit shadowing}.
\newblock {\em Fund. Math.}, 244:287--312, 2019.

\bibitem{GM}
S.~Grivaux and E.~Matheron.
\newblock {Invariant measures for frequently hypercyclic operators}.
\newblock {\em Adv. Math.}, 265:371--427, 2014.

\bibitem{KLPS}
J.~Kingsbery, A.~Levin, A.~Preygel, and C.~E. Silva.
\newblock {On measure-preserving $C^1$ transformations of compact-open subsets
  of non-Archimedean local fields.}
\newblock {\em Trans. Amer. Math. Soc.}, 361(1), 2009.

\bibitem{MR1484730}
B.~P. Kitchens.
\newblock {\em {Symbolic dynamics. One-sided, two-sided and countable state
  Markov shifts.}}
\newblock Universitext. Springer-Verlag, Berlin, 1998.

\bibitem{kurka}
P.~K{u}rka.
\newblock {\em {Topological and symbolic dynamics}}, volume~11 of {\em Cours
  Sp\'{e}cialis\'{e}s [Specialized Courses]}.
\newblock Soci\'{e}t\'{e} Math\'{e}matique de France, Paris, 2003.

\bibitem{lindMarcus}
D.~Lind and B.~Marcus.
\newblock {\em {An introduction to symbolic dynamics and coding}}.
\newblock Cambridge University Press, Cambridge, 1995.

\bibitem{Malick}
M.~Malicki.
\newblock {Generic elements in isometry groups of Polish ultrametric spaces}.
\newblock {\em Isr. J. Math.}, 206:127--153, 2015.

\bibitem{Matui}
H.~Matui.
\newblock {Topological orbit equivalence of locally compact Cantor minimal
  systems}.
\newblock {\em Ergodic Theory Dynam. Systems}, 22(6):1871--1903, 2002.

\bibitem{meddaughRaines}
J.~Meddaugh and B.~Raines.
\newblock {A characterization of omega-limit sets in subshifts of Baire space},
  2020.

\bibitem{NylandOrtega}
P.~Nyland and E.~Ortega.
\newblock {Topological full groups of ample groupoids with applications to
  graph algebras}.
\newblock {\em Internat. J. Math.}, 30(4):1950018, 66, 2019.

\bibitem{MR3207861}
W.~Ott, M.~Tomforde, and P.~N. Willis.
\newblock {\em {One-sided shift spaces over infinite alphabets}}, volume~5 of
  {\em New York Journal of Mathematics. NYJM Monographs}.
\newblock State University of New York, University at Albany, Albany, NY, 2014.

\bibitem{palmer}
K.~Palmer.
\newblock {\em {Shadowing in dynamical systems}}, volume 501 of {\em
  Mathematics and its Applications}.
\newblock Kluwer Academic Publishers, Dordrecht, 2000.
\newblock Theory and applications.

\bibitem{Pilyugin}
S.~Y. Pilyugin.
\newblock {\em {Shadowing in dynamical systems}}, volume 1706 of {\em Lecture
  Notes in Mathematics}.
\newblock Springer-Verlag, Berlin, 1999.

\bibitem{raines}
B.~Raines and T.~Underwood.
\newblock {Scrambled sets in shift spaces on a countable alphabet}.
\newblock {\em Proc. Am. Math. Soc.}, 144(1):214--224, 2015.

\bibitem{Robert}
A.~M. Robert.
\newblock {\em {A Course in p-adic Analysis}}, volume 198 of {\em Graduate
  Texts in Mathematics}.
\newblock Springer, 2000.

\bibitem{DBLP:books/daglib/p/RosS14}
L.~M. Ros and P.~Schlicht.
\newblock {Lipschitz and uniformly continuous Reducibilities on Ultrametric
  Polish spaces}.
\newblock In V.~Brattka, H.~Diener, and D.~Spreen, editors, {\em {Logic,
  Computation, Hierarchies}}, volume~4 of {\em Ontos Mathematical Logic}, pages
  213--258. De Gruyter, 2014.

\bibitem{WaltP}
P.~Walters.
\newblock {\em {On the pseudo-orbit tracing property and its relationship to
  stability. In: Markley N.G., Martin J.C., Perrizo W. (eds) The Structure of
  Attractors in Dynamical Systems}}, volume 668 of {\em Lecture Notes in
  Mathematics}.
\newblock Springer, Berlin, Heidelberg, 1978.

\end{thebibliography}

\noindent Udayan B. Darji\\ ubdarj01@louisville.edu\\
{\em Department of Mathematics,\\ University of Louisville,\\
Louisville,
KY 40292, USA.}\\

\smallskip
\noindent Daniel Gon\c{c}alves\\
daemig@gmail.com\\
Departamento de Matem\'{a}tica\\
Universidade Federal de Santa Catarina \\
Florian\'{o}polis, SC 88040-900 , Brazil\\

\smallskip
\noindent Marcelo Sobottka\\
marcelo.sobottka@ufsc.br\\
Departamento de Matem\'{a}tica\\
Universidade Federal de Santa Catarina \\
Florian\'{o}polis, SC 88040-900 , Brazil\\
\end{document}